\theoremstyle{plain}
\newtheorem{theorem}{Theorem}[section]
\newtheorem{conjecture}[theorem]{Conjecture}
\newtheorem{lemma}[theorem]{Lemma}
\newtheorem{proposition}[theorem]{Proposition}
\newtheorem{corollary}[theorem]{Corollary}
\theoremstyle{remark}
\newtheorem{example}[theorem]{Example}
\newtheorem{remark}[theorem]{Remark}
\theoremstyle{definition}
\newtheorem{definition}[theorem]{Definition}
\newcommand{\Crit}{ \mathrm{Crit} }
\newcommand{\ft}{ \mathfrak{t} }
\newcommand{\ftd}{ \mathfrak{t}^\ast }
\newcommand{\fk}{ \mathfrak{k} }
\newcommand{\fkc}{ \mathfrak{k}_\mathbb{C} }
\newcommand{\fkcd}{ \mathfrak{k}_\mathbb{C}^\ast }
\newcommand{\fkd}{ \mathfrak{k}^\ast }
\newcommand{\npvz}{\norm{\normsq{\phi(v)}-\normsq{\phi_c}}}
\newcommand{\red}{/\!/}
\newcommand{\reda}[1]{ \underset{#1}{/\!/} }
\newcommand{\rred}{/\!/\!/}
\newcommand{\rreda}[1]{ \underset{#1}{/\!/\!/} }
\newcommand{\iso}{ \cong } 
\newcommand{\Span}{\mathrm{span}}
\newcommand{\mur}{\mu_\mathbb{R}}
\newcommand{\muc}{\mu_\mathbb{C}}
\newcommand{\muhk}{\mu_{\mathrm{HK}}}
\newcommand{\omegar}{ \omega_\mathbb{R} }
\newcommand{\omegac}{ \omega_\mathbb{C} }
\newcommand{\inner}[2]{\langle #1, #2 \rangle}
\newcommand{\grad}{\nabla}
\newcommand{\loj}{{\L}ojasiewicz}
\newcommand{\gli}{global {\L}ojasiewicz inequality}
\newcommand{\hk}{hyperk\"ahler}
\newcommand{\Ka}{K\"ahler}
\newcommand{\ab}{(\alpha, \beta)}
\newcommand{\CP}{ \mathbb{CP} }
\newcommand{\CC}{ \mathbb{C} }
\newcommand{\RR}{ \mathbb{R} }
\newcommand{\QQ}{ \mathbb{Q} }
\newcommand{\HH}{ \mathbb{H} }
\newcommand{\ZZ}{ \mathbb{Z} }
\newcommand{\norm}[1]{\left| #1 \right|}
\newcommand{\normsq}[1]{\left| #1 \right|^2}
\newcommand{\stab}{ \mathrm{stab} }
\newcommand{\suchthat}{ \ | \ }
\newcommand{\into}{ \hookrightarrow }
\newcommand{\Ad}{ \mathrm{Ad} }
\newcommand{\ann}{\mathrm{ann}} 
\begin{document}
\title[Morse Theory with the Norm-Square of a Hyperk\"ahler Moment Map]
{Morse Theory with the Norm-Square of a Hyperk\"ahler Moment Map}
\date{\today} 

\author{Jonathan Fisher}  
\address{Department of Mathematics, University of Toronto, Canada}
\email{jmfisher@math.utoronto.ca}

\begin{abstract} We prove that the norm-square of a moment
map associated to a linear action of a compact group on an affine variety
satisfies a certain gradient inequality. This allows us to bound the gradient
flow, even if we do not assume that the moment map is proper.
We describe how this inequality can be extended to hyperk\"ahler moment maps
in some cases, and use Morse theory with the norm-squares of 
hyperk\"ahler moment maps to compute the Betti numbers and cohomology rings
of all toric hyperk\"ahler orbifolds.
\end{abstract}

\maketitle


\section{Introduction}
Let $M$ be a symplectic manifold acted upon by a compact group $K$ 
with moment map $\mu: M \to \fkd$. If $0$ is a regular value of $\mu$, 
then the reduced space $M \red K := \mu^{-1}(0)/G$ is a symplectic 
orbifold. Following the ideas of Atiyah and Bott \cite{YangMillsRiemannSurface}, 
Kirwan showed  that the 
topology of $M \red K$ can be understood via $K$-equivariant Morse theory
with $|\mu|^2$ \cite{KirwanCohomology}. Let $H_K^\ast(M)$ denote the
$K$-equivariant cohomology of $M$ (we take cohomology with rational 
coefficients unless otherwise specified).
There is a natural map $\kappa: H_K^\ast(M) \to H^\ast(M \red K)$
called the \emph{Kirwan map}, and one of the most important consequences 
of Kirwan's analysis is that (under mild assumptions) this map is surjective.
Thus we have an isomorphism
$H^\ast(M \red K) \iso H_K^\ast(M) / \ker \kappa$,
so that if $H_K^\ast(M)$ is known then the problem of computing the cohomology 
ring of $M \red K$ reduces to that of computing the kernel of $\kappa$,
 a problem which has been well-studied (see, for example, 
\cite{NonabelianLocalization, TolmanWeitsman}).

If $M$ is a hyperk\"ahler manifold
then a hyperk\"ahler moment map is a certain triple of moment maps
 $\muhk = (\mu_1, \mu_2, \mu_3)$, and the hyperk\"ahler quotient \cite{HKLR} is defined
to be $M \rred K := \muhk^{-1}(0) / K$ (we review this
construction in Section \ref{Quotients}). As in the symplectic case,
we have a natural map $H_K^\ast(M) \to H^\ast(M \rred K)$ which we call
the \emph{hyperk\"ahler Kirwan map}. 
It has been an outstanding question whether the hyperk\"ahler Kirwan map is 
surjective. There are several families of examples for which surjectivity is
known (see for example \cite{KonnoHypertoric, KonnoHyperpolygon}); 
however there is currently no general result.
Recently, it was proved \cite{CohomologyofHKQuotients} that surjectivity
of the hyperk\"ahler Kirwan map would follow from Morse theory with the 
function $|\muc|^2 := |\mu_2|^2 + |\mu_3|^2$, provided that its gradient flow 
satisfies 
certain analytic conditions. In principle, this provides a general method for 
proving surjectivity of the hyperk\"ahler Kirwan map.

However, the analytic
conditions needed to apply these methods appear
to be very difficult to check---in \cite{CohomologyofHKQuotients},
these conditions are verified only for linear $S^1$ actions,
but are conjectured to hold for all linear actions. The difficulty is that
unlike the symplectic case, we cannot assume that the moment map is proper,
as virtually all interesting hyperk\"ahler quotients are non-compact.
 
Motivated by this problem, our goal is to provide 
tools to study the gradient flow of $|\mu|^2$ without making any 
compactness assumptions. 
To control the gradient flow, we introduce a certain gradient inequality,
motivated by the works
 \cite{GradientSemialgebraic,LermanGradientFlow,NeemanQuotientVarieties}.
The main result in this direction is Theorem \ref{GlobalEstimate}, 
which gives a precise gradient inequality for $|\mu|^2$ when $\mu$ is a 
moment map associated to a linear action. We introduce the
relevant gradient inequalities in Section \ref{LojasiewiczInequalities}
and discuss some immediate applications.
We prove Theorem \ref{GlobalEstimate}
in Section \ref{Proof}. Combined with Proposition \ref{FlowClosedness},
this gives us rather good quantitative control on the gradient trajectories
of $-|\mu|^2$.

When the group $K$ is abelian, we also obtain analogous results for 
$|\muhk|^2$ and $|\muc|^2$. We use these results to show that the analytic 
conditions of 
\cite{CohomologyofHKQuotients} are satisfied by all linear actions by tori.
In Section \ref{HypertoricSection}, we study this case in detail
and show how Morse theory with the hyperk\"ahler moment map can be used
to compute the cohomology rings of all toric
hyperk\"ahler orbifolds, reproducing several known results
\cite{BielawskiDancerHypertoric,HaradaProudfoot,KonnoHypertoric} in a 
rather uniform way.

\textbf{Acknowledgements}. The author would like to thank
Lisa Jeffrey for her careful guidance and supervision, as well as
Andrew Dancer, Pierre Milman, and Nicholas Proudfoot for
helpful conversations.
This research was supported by an NSERC Canada Graduate Scholarship (Doctoral).
 
\section{K\"ahler and Hyperk\"ahler Quotients}
\label{Quotients}
Let $K$ be a compact Lie group acting on a symplectic manifold $(M, \omega)$.
Let $\fk$ denote the Lie algebra of $K$.
The action is said to be \emph{Hamiltonian} if there is a 
$K$-equivariant map $\mu: M \to \fkd$ which satisfies
\begin{equation}
d\inner{\mu}{\xi} = i(v_\xi) \omega \ \ \forall \xi \in \fk,
\end{equation}
where $i(\cdot)$ denotes the interior product and $v_\xi$ is the 
fundamental vector field
\begin{equation}
v_\xi(x) = \left. \frac{d}{dt} \right|_{t=0} e^{t\xi} \cdot x,
\end{equation}
associated to any $\xi \in \fk$. Since $K$ is compact, we can choose an 
invariant inner product on $\fk$ to identify it with $\fkd$, and via this 
identification we will often think of moment maps as taking values in $\fk$.

If $\alpha$ is a regular central value of $\mu$, then the
\emph{symplectic reduction}
\[ M \reda{\alpha} K := \mu^{-1}(\alpha) / K, \]
is a symplectic orbifold, and a manifold if $K$ acts freely on
$\mu^{-1}(0)$. We will sometimes denote this by $M \red K$ when we do not wish to
emphasize the parameter $\alpha$. If in addition $M$ admits a 
$K$-invariant \Ka\ metric $g$ compatible with $\omega$, 
the \Ka\ structure descends to $M \red K$ in a natural way.
In this case we call $M \red K$ a \emph{K\"ahler quotient} to
emphasize this fact.

A manifold $M$ is \hk\ if it has a metric $g$ and a triple of 
symplectic forms $(\omega_1, \omega_2, \omega_3)$ which are all K\"ahler
forms with 
respect to $g$ (i.e. compatible and parallel),
 and such that the respective complex structures 
$(I_1, I_2, I_3)$ satisfy the quaternion relations ($I_1 I_2 = I_3$, etc.). 
A hyperk\"ahler moment map for an action of $K$ on $M$ is a triple 
$\muhk := (\mu_1, \mu_2, \mu_3)$ such that each $\mu_i$ is a moment map
for the action of $K$ with respect to $\omega_i$. Let us introduce
\begin{eqnarray}
\omegar &=& \omega_1, \\
\omegac &=& \omega_2 + i \omega_3,
\end{eqnarray}
with corresponding moment maps $\mur := \mu_1$ and $\muc := \mu_2 + i \mu_3$.
We call $\mur$ and $\muc$ the real and complex moment maps, respectively.
We will often think of $\mu_{HK}$ as the pair $(\mur, \muc)$.
For a pair $(\alpha, \beta) \in \fkd \oplus \fkcd$ that is regular and
central, we define the \emph{hyperk\"ahler quotient} \cite{HKLR} to be
\[ M \rreda{\ab} K := ( \mur^{-1}(\alpha) \cap \muc^{-1}(\beta) ) / K. \]
This is a \hk\ orbifold, and a manifold if $K$ acts
freely on $\mur^{-1}(\alpha) \cap \muc^{-1}(\beta)$. 

We now recall the most important source of examples of hyperk\"ahler
quotients. Let $V$ be a Hermitian vector space. Its cotangent
bundle $T^\ast V$ is naturally a hyperk\"ahler vector space, with metric $g$ and
complex structure $I_1$ induced from $V$, with $I_2$ defined by
$I_2(x,y) = (-\bar{y}, \bar{x})$, and with $I_3 := I_1 I_2$.
Any unitary representation of $K$ on $V$ induces a linear action on
$T^\ast V$ which preserves this hyperk\"ahler structure.
Since $T^\ast V$ is a vector space, a moment map always exists, and
we may take the hyperk\"ahler quotient $T^\ast V \rred K$.
For a general discussion of hyperk\"ahler analogues of K\"ahler quotients, 
see \cite{ProudfootThesis}.

\begin{example} Let $V = \mathbb{C}^{N+1}$ with the standard diagonal
action of $S^1$. The moment map $\mu: V \to \mathbb{R}$ is given by
$x \mapsto |x|^2 / 2$, and the K\"ahler quotient is $\CP^N$ 
equipped with the Fubini-Study metric. For the induced action of $S^1$
on $T^\ast V$, we have
\begin{eqnarray*}
\mur &=& \frac{1}{2} |x|^2 - \frac{1}{2} |y|^2, \\
\muc &=& x \cdot y,
\end{eqnarray*}
and the hyperk\"ahler quotient $T^\ast V \rred S^1$ is $T^\ast \CP^N$
 with the Calabi metric. Note that while the moment map for the $S^1$ action
on $\CC^{N+1}$ is a square, the moment map for the induced action on 
$T^\ast \CC^{N+1}$ is a \emph{difference} of squares. 
\end{example}

\begin{remark} In the above example, we found that
\[ T^\ast \CC^{N+1} \rred S^1 \iso T^\ast (\CC^{N+1} \red S^1). \]
It is always true that $T^\ast (V \red K)$ embeds into $T^\ast V \rred K$
as an open dense set (as long as it is not empty), but in general
this is not an isomorphism.
\end{remark}
  
\section{Equivariant Morse Theory}
We now recall some of the main ideas of Kirwan \cite{KirwanCohomology}.
Let $M$ be a symplectic manifold with a Hamiltonian action of
a compact group $K$, and assume that $0$ is a regular value of the moment map.
Let $f = |\mu|^2$, and for the moment assume that $f$ is proper.

Kirwan shows that the components $C$ of the critical set of $f$ can
be understood in terms of the fixed-point sets of subtori of a maximal torus
of $K$. Using a local description of $|\mu|^2$ near each component $C$, she
then proves that $|\mu|^2$ is \emph{minimally degenerate} (a weaker condition
than being Morse-Bott). This allows her to conclude that the stable manifolds
$S_C$ (the set of points $x \in M$ such that 
the gradient flow of $-|\mu|^2$ through $x$ has a limit point in $C$)
form a smooth $K$-invariant stratification of $M$.
Thus for each $C$ we obtain the equivariant Thom-Gysin sequence
\[ \cdots \to H^{\ast-\lambda_C}_K(C) \to
 H^\ast_K(M_C^+) \to H^\ast_K(M_C^-) \to \cdots \]
where $\lambda_C$ is the Morse index of $C$ and 
$M_C^\pm = f^{-1}((-\infty, f(C)\pm\epsilon])$, for $\epsilon > 0$ sufficiently
small. The map $H^{\ast-\lambda_C}_K(C) \to H^\ast_K(M_C^+)$
is, after composition with restriction $H^\ast_K(M_C^+) \to H^\ast_K(C)$,
multiplication by the equivariant Euler class of the negative normal bundle
to $C$. By the Atiyah-Bott lemma \cite{MomentMapEquivariantCohomology}, 
the equivariant
Euler class is not a zero divisor, hence the equivariant Thom-Gysin sequence 
splits into short exact sequences
\[ 0 \to H^{\ast-\lambda_C}_K(C) \to
 H^\ast_K(M_C^+) \to H^\ast_K(M_C^-) \to 0. \]
Thus the restriction $H_K^\ast(M_C^+) \to H_K^\ast(M_C^-)$ is surjective,
and its kernel consists of those classes whose restriction to $C$ is 
a multiple of the equivariant Euler class of the negative normal bundle to $C$.
An inductive argument then yields surjectivity of the restriction
$H_K^\ast(M) \to H_K^\ast(\mu^{-1}(0)) \iso H^\ast(M \red K)$.

We are interested in the case when $|\mu|^2$ is not proper. 
To aid the discussion, we introduce the following definition.

\begin{definition} \label{DefnFlowClosed}
A function $f: M \to \RR$ is said to be \emph{flow-closed} if every
(positive time) trajectory of $-\grad f$ is contained in a compact set.
\end{definition}

As remarked
in \cite[\S 9]{KirwanCohomology}, the above results still hold under
the weaker condition that $|\mu|^2$ is flow-closed. We summarize
these results as follows.

\begin{theorem}[Kirwan \cite{KirwanCohomology}] \label{KirwanSurjectivity}
Suppose $f = |\mu|^2$ is flow-closed and that $0$ is a regular value of $\mu$. 
Then the stable manifolds $S_C$ form a smooth $K$-invariant stratification of 
$M$, and the equivariant Thom-Gysin sequence splits into short exact sequences.
Consequently, the function $|\mu|^2$ is equivariantly perfect,
and the Kirwan map $\kappa: H_K^\ast(M) \to H^\ast(M \red K)$ is surjective.
The kernel of $\kappa$ is the ideal in $H_K^\ast(M)$
generated by those classes whose restriction to some component $C$ of the 
critical set of $f$ is the equivariant 
Euler class of the negative normal bundle to $C$.
\end{theorem}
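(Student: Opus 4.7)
The plan is to adapt Kirwan's original argument step-by-step, replacing each appeal to compactness of sublevel sets of $f = |\mu|^2$ with an appeal to the flow-closed hypothesis. First I would verify that the critical-set analysis of Kirwan goes through unchanged: the components $C$ of $\Crit(f)$ are identified with certain $K$-orbits of fixed-point sets of subtori of a maximal torus of $K$, and the local normal form for $f$ near $C$ (writing $f$ locally as the sum of a constant, a nonnegative function vanishing on $C$, and a quadratic form on the negative normal bundle) depends only on the Hamiltonian structure and equivariant Darboux/slice theorems, not on any global compactness. This yields \emph{minimal degeneracy} of $f$ in the sense of \cite{KirwanCohomology}.

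The next and most delicate step is to show that the stable manifolds $S_C := \{ x \in M \suchthat \omega(x) \cap C \neq \emptyset\}$ (where $\omega(x)$ denotes the $\omega$-limit set of the $-\grad f$ trajectory through $x$) partition $M$ into a smooth $K$-invariant stratification. Here flow-closedness does the work previously done by properness: each positive-time trajectory is contained in a compact set, so $\omega(x)$ is nonempty, compact, connected, and contained in $\Crit(f)$. Combined with minimal degeneracy and the local normal form near each $C$, standard stable-manifold theory for minimally degenerate functions shows that $S_C$ is a $K$-invariant locally closed submanifold with the expected codimension $\lambda_C$, and that the collection $\{S_C\}$ satisfies the axioms of a smooth stratification. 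The key point to check carefully is that the frontier $\overline{S_C} \setminus S_C$ is contained in a union of strata $S_{C'}$ with $f(C') < f(C)$; this is where flow-closedness prevents trajectories from ``escaping to infinity'' and thereby closes the inductive setup.

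Once the stratification is established, the rest is formal. For each component $C$, the $K$-equivariant Thom--Gysin sequence of the pair $(M_C^+, M_C^-)$ takes the form
\[ \cdots \to H^{\ast-\lambda_C}_K(C) \to H^\ast_K(M_C^+) \to H^\ast_K(M_C^-) \to \cdots, \]
where the first map is multiplication by the equivariant Euler class $e_C$ of the negative normal bundle to $C$ (after restriction). The Atiyah--Bott lemma \cite{MomentMapEquivariantCohomology} asserts that $e_C$ is not a zero divisor in $H_K^\ast(C)$, so this map is injective and the long exact sequence splits into short exact sequences. In particular $H_K^\ast(M_C^+) \to H_K^\ast(M_C^-)$ is surjective with kernel the ideal generated by classes restricting to multiples of $e_C$ on $C$.

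Finally I would induct on the partially ordered set of critical values. Because $f$ is flow-closed and $0$ is a regular value, the minimum stratum is $S_{\mu^{-1}(0)} = \mu^{-1}(0)$ and has a $K$-equivariant deformation retract onto $\mu^{-1}(0)$, giving $H_K^\ast(\mu^{-1}(0)) \iso H^\ast(M \red K)$ since $K$ acts locally freely there. Splicing the short exact sequences together (exhausting $M$ by sublevel sets $\{f \le c\}$ as $c \to \infty$, which is legitimate because every trajectory eventually enters every such sublevel set by flow-closedness) yields equivariant perfection of $f$ and surjectivity of $\kappa$. The kernel description follows immediately from tracking which classes die under each restriction $H_K^\ast(M_C^+) \to H_K^\ast(M_C^-)$. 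The main obstacle throughout is the frontier/closure condition on the stratification in the noncompact setting; everything else is a direct transcription of Kirwan's argument once flow-closedness is in hand.
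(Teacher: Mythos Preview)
Your proposal is correct and follows essentially the same outline the paper gives in the paragraphs preceding the theorem statement: critical-set analysis via subtori, minimal degeneracy from the local normal form, stratification by stable manifolds (with flow-closedness substituting for properness, as Kirwan herself notes in \cite[\S 9]{KirwanCohomology}), splitting of the Thom--Gysin sequence via the Atiyah--Bott lemma, and the inductive surjectivity argument. The paper does not supply an independent proof---it cites this as Kirwan's result and merely sketches the argument---so your write-up is a faithful expansion of exactly that sketch.
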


One might expect analogous results to hold for $|\muhk|^2$. However,
this appears not to be the case (except when $K$ is a 
torus \cite{KirwanHyperkahlerManuscript}). The problem is that in the
nonabelian situation, the norm of the gradient of $|\muhk|^2$ contains 
a term proportional to
the structure constants of the group that is difficult to understand
(see Remark \ref{CrossTermRemark}).
In \cite{CohomologyofHKQuotients}, it was found instead that the 
function $|\muc|^2$ is better behaved, owing to the fact that $\muc$
is $I_1$-holomorphic. 

\begin{theorem}[{Jeffrey-Kiem-Kirwan 
\cite{CohomologyofHKQuotients}, Kirwan \cite{KirwanHyperkahlerManuscript}}]
\label{SurjectivityCriterion} The function $f = |\muc|^2$ is minimally
degenerate. If $f$ is flow-closed, then the stable manifolds $S_C$
form a $K$-invariant stratification of $M$ and the equivariant Thom-Gysin
sequence splits into short exact sequences. Consequently, the restriction
$H_K^\ast(M) \to H_K^\ast(\muc^{-1}(0))$ is surjective.
If $K$ is a torus, the same 
conclusions hold for $|\muhk|^2$ provided that it is flow-closed.
\end{theorem}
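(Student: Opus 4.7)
The plan is to adapt Kirwan's Morse-theoretic argument from the symplectic setting by replacing $\normsq{\mu}$ with $\normsq{\muc}$, exploiting the fact that $\muc$ is holomorphic with respect to the complex structure $I_1$. The global structure parallels Theorem~\ref{KirwanSurjectivity}: first I would establish that $f := \normsq{\muc}$ is minimally degenerate; then, assuming flow-closedness, conclude that the stable manifolds of $-\grad f$ form a smooth $K$-invariant stratification whose associated equivariant Thom-Gysin sequence splits into short exact sequences; and finally deduce surjectivity of $H_K^\ast(M) \to H_K^\ast(\muci(0))$ by induction on critical values.

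For the first step, I would analyze the critical set of $f$. Using the $I_1$-holomorphicity of $\muc$ together with the moment map identities for $\omega_2$ and $\omega_3$, one can write $\grad f$ as a fundamental vector field determined by the current value of $\muc$; a critical point is then characterized by the condition that $\muc(x)$ lies in the annihilator of the tangent to the $K$-orbit through $x$, and each critical component $C$ is an $I_1$-holomorphic subvariety contained in a level set of $\muc$. The main task is to produce, near each $C$, a local minimizing submanifold $\Sigma_C$ on which $f|_{\Sigma_C}$ attains a local minimum along $C$, together with a complementary distribution on which the Hessian of $f$ is negative-definite. The natural candidate for $\Sigma_C$ should come from a $K$-invariant $I_1$-holomorphic slice through $C$, furnished by a Luna-type slice theorem for the complexified action; establishing minimal degeneracy in this way is the hardest step and the main obstacle of the argument, since $f$ need not be Morse-Bott and its Hessian at $C$ may have substantial degeneracy transverse to the minimizing direction.

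Once minimal degeneracy is in hand, the flow-closedness hypothesis guarantees that the stable manifolds $S_C$ are well defined and form a locally finite $K$-invariant stratification by the general theory of minimally degenerate functions. For each $C$ one obtains the equivariant Thom-Gysin long exact sequence whose connecting homomorphism is cup product with the equivariant Euler class of the negative normal bundle $\nu_C^-$. Because the local normal form at $C$ exhibits a subtorus of $K$ acting on $\nu_C^-$ with nonzero weights, the Atiyah-Bott lemma applies and this Euler class is a non-zero-divisor in $H_K^\ast(C)$; hence the Thom-Gysin sequences split. A standard induction on critical values, passing from $M_C^+$ to $M_C^-$, then yields surjectivity of the restriction onto $H_K^\ast(\muci(0))$.

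For the last sentence, when $K$ is a torus I would repeat the same procedure for $\normsq{\muhk} = \normsq{\mur} + \normsq{\muc}$. The key simplification is that the cross-term obstructing the nonabelian case (see Remark~\ref{CrossTermRemark}) vanishes in the abelian setting, so the critical set can be described as a common refinement of the critical sets of the two summands, and the minimizing manifolds can be built by combining the $I_1$-holomorphic slice for $\muc$ with the standard Kirwan slice for $\mur$. The Thom-Gysin argument then proceeds exactly as before. Throughout, the principal technical difficulty is the construction of minimizing manifolds for $\normsq{\muc}$ (and $\normsq{\muhk}$ in the toric case) at critical components where the function is genuinely non-Morse-Bott.
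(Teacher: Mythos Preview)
The paper does not contain a proof of this theorem: it is quoted as a result of Jeffrey--Kiem--Kirwan \cite{CohomologyofHKQuotients} and Kirwan \cite{KirwanHyperkahlerManuscript}, and the author simply cites it and moves on. So there is no ``paper's own proof'' to compare your proposal against.

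That said, your sketch is a reasonable outline of how the cited works proceed, and in broad strokes it is correct: one shows minimal degeneracy of $\normsq{\muc}$ by exploiting $I_1$-holomorphicity of $\muc$, then uses flow-closedness plus the Atiyah--Bott lemma to split the equivariant Thom--Gysin sequences, exactly as in Kirwan's symplectic argument. The one place where your description drifts is the claim that $\grad f$ is ``a fundamental vector field determined by the current value of $\muc$'': in fact $\grad\normsq{\muc} = 2I_2 v_{\mu_2} + 2I_3 v_{\mu_3}$ involves two different complex structures simultaneously (this is precisely the point of the discussion after Proposition~\ref{SjamaarTrick}), so the critical-point description and the local analysis are more delicate than in the symplectic case. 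This is indeed the heart of the cited references, and you have correctly flagged the construction of minimizing manifolds as the main obstacle, but be aware that the slice you want is not quite a standard Luna slice for the $K_\CC$-action.
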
  

Note that $\mu^{-1}(0)$ is a $K$-invariant complex submanifold of $M$,
so that $M \rred K \iso \muc^{-1}(0) \red K$. Hence surjectivity of map
$H_K^\ast(\muc^{-1}(0)) \to H^\ast(M \rred K)$ can be studied by
the usual methods (but see Corollary \ref{HomotopyCorollary}).
In principle, this reduces the question of Kirwan surjectivity for
hyperk\"ahler quotients to the following conjecture.

\begin{conjecture}[\cite{CohomologyofHKQuotients}]
\label{FlowClosedConjecture}
If $\muc$ is a complex moment map associated to a linear action by a compact 
group $K$ on a vector space, then $|\muc|^2$ is flow-closed. 
\end{conjecture}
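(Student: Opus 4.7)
The plan is to establish a \loj-style gradient inequality for $\normsq{\muc}$ in the spirit of Theorem \ref{GlobalEstimate}, and then invoke Proposition \ref{FlowClosedness} to convert the inequality into flow-closedness. By choosing an invariant Hermitian structure, we may reduce to the universal linear model, where the underlying space is $T^\ast V$ for some unitary $K$-representation $V$ and $\muc$ is a $K$-equivariant polynomial map of total degree $2$ that is holomorphic with respect to $I_1$. In particular $\normsq{\muc}$ is a real polynomial, so the classical \loj\ theorem immediately supplies local gradient inequalities at every point; the task is to globalise them with effective constants, which is precisely the content of Theorem \ref{GlobalEstimate} in the symplectic setting.

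The key calculation is the explicit expression for $\grad \normsq{\muc}$. Writing $v_\xi$ for the fundamental vector field of $\xi \in \fk$, the moment-map identity $d\mu_i^\xi = \iota_{v_\xi}\omega_i$ gives $\grad \normsq{\muc} = 2I_2 v_{\mu_2} + 2 I_3 v_{\mu_3}$. Expanding the squared norm using the quaternion relations, together with the equivariance identity $\omega_1(v_\xi, v_\eta) = \inner{\mu_1}{[\xi,\eta]}$, yields
\[ \normsq{\grad \normsq{\muc}} \;=\; 4\normsq{v_{\mu_2}} \,+\, 4\normsq{v_{\mu_3}} \,+\, 8\inner{\mu_1}{[\mu_2,\mu_3]}. \]
When $K$ is abelian the bracket term vanishes, and applying Theorem \ref{GlobalEstimate} to $\mu_2$ and $\mu_3$ (with respect to $\omega_2$ and $\omega_3$) gives the required lower bound on $\normsq{\grad \normsq{\muc}}$ in terms of $\normsq{\muc}^{2\theta}$ for some $\theta \in (1/2, 1)$. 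Combined with Proposition \ref{FlowClosedness}, this closes the abelian case and in particular recovers the toric result used in Section \ref{HypertoricSection}.

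The main obstacle in the nonabelian case is the residual cross-term $\inner{\mu_1}{[\mu_2,\mu_3]}$, which is the same structure-constant pathology flagged in Remark \ref{CrossTermRemark}. It has no definite sign, and along a trajectory on which $\mu_1$ drifts far from zero it could in principle dominate the diagonal terms and cause the inequality to fail. My plan to handle it rests on exploiting the $I_1$-holomorphicity of $\muc$ more seriously: this makes $\normsq{\muc}$ plurisubharmonic with respect to $\omegar$, and forces the $(1,0)$-part of the gradient flow to follow complex $K_{\mathbb{C}}$-orbits. One would then monitor the auxiliary function $\normsq{x}$ along a trajectory, aiming to couple the monotone decay of $\normsq{\muc}$ with Kempf--Ness-type estimates on the complex orbit to prove $\normsq{x}$ stays bounded, which gives flow-closedness directly. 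I expect the genuinely hard step to be a uniform bound on the bracket term along trajectories that venture far from $\muci(0)$; this is where a substantially new ingredient, beyond the \loj\ machinery of Section \ref{LojasiewiczInequalities}, will be needed to complete the argument in full generality.
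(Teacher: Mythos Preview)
This statement is a \emph{conjecture}; the paper does not prove it in full generality. What the paper does prove is the special case where $K$ is a torus (the theorem immediately following the conjecture), and your abelian argument is essentially identical to the paper's: the cross term $\inner{\mu_1}{[\mu_2,\mu_3]}$ vanishes (Lemma \ref{CrossTermVanish}), the \loj\ inequality of Theorem \ref{GlobalEstimate} applies to each $|\mu_i|^2$ separately, and Proposition \ref{FlowClosedness} finishes. So on the part that is actually established, your proposal and the paper agree.

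Your nonabelian sketch, however, is not a proof, and you acknowledge as much. There is also a concrete flaw in the proposed mechanism. You suggest that $I_1$-holomorphicity of $\muc$ ``forces the $(1,0)$-part of the gradient flow to follow complex $K_\CC$-orbits'' and then appeal to Kempf--Ness-type estimates along such orbits. But $\grad|\muc|^2 = 2I_2 v_{\mu_2} + 2 I_3 v_{\mu_3}$ is tangent to the distribution $\mathcal{D}_\HH = \mathcal{D} + I_1\mathcal{D} + I_2\mathcal{D} + I_3\mathcal{D}$, not to $\mathcal{D}_\CC = \mathcal{D} + I_1\mathcal{D}$; the trajectories do \emph{not} lie in $K_\CC$-orbits, and $\mathcal{D}_\HH$ is generally non-integrable (see the discussion after Proposition \ref{SjamaarTrick}). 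This is precisely why the Sjamaar-type argument that works for $|\mu|^2$ has no direct analogue here, and why the paper singles out the bracket term in Remarks \ref{CrossTermRemark} and \ref{SubriemannianRemark} as the genuine obstruction. Plurisubharmonicity of $|\muc|^2$ with respect to $I_1$ is a true and useful fact, but it does not by itself give control over trajectories that leave every $K_\CC$-orbit. Until you can bound $\inner{\mu_1}{[\mu_2,\mu_3]}$ against the diagonal terms (or otherwise circumvent it), the nonabelian case remains open, just as in the paper.
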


This was proved for the special case of $S^1$ actions in 
\cite{CohomologyofHKQuotients}, but the method of proof does not
admit any obvious generalization. We will prove the following.

\begin{theorem} Conjecture \ref{FlowClosedConjecture} is true when $K$
is a torus.
\end{theorem}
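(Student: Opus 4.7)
The plan is to combine Proposition \ref{FlowClosedness} with the abelian analog of Theorem \ref{GlobalEstimate} for $|\muc|^2$ that the paper has already announced. Let $T$ be a torus acting unitarily on a Hermitian vector space $V$, inducing a hyperk\"ahler action on $T^\ast V$ with complex moment map $\muc$. Granting a global \loj-type gradient inequality for $|\muc|^2$ on $T^\ast V$, Proposition \ref{FlowClosedness} bounds the length of each forward trajectory of $-\grad|\muc|^2$ and confines it to a bounded, hence compact, subset of $T^\ast V$; that is exactly the statement that $|\muc|^2$ is flow-closed.

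The content therefore reduces to establishing the gradient inequality for $|\muc|^2$ under the torus hypothesis, and this is where the abelian assumption is decisive. The obstruction flagged in Remark \ref{CrossTermRemark} in the nonabelian case is that $\nnorm{\grad|\muc|^2}^2$ acquires cross terms of the form $\omega_j(v_\xi, v_\eta) = \inner{\mu_j}{[\xi, \eta]}$, coming from the Hamiltonian interactions between $\mu_2$ and $\mu_3$. When $\fk$ is abelian all such terms vanish identically, and $\grad|\muc|^2$ reduces cleanly to $2I_2 v_{\mu_2} + 2I_3 v_{\mu_3}$. A short computation using the quaternion relations and the vanishing of these brackets also yields the convenient conservation law
\[ d\inner{\mur}{\xi}(\grad|\muc|^2) = 2\omega_3(v_\xi, v_{\mu_2}) - 2\omega_2(v_\xi, v_{\mu_3}) = 0 \foreach \xi \in \ft, \]
so that $\mur$ is preserved along the flow. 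Using the weight-space decomposition $V = \bigoplus_\alpha V_\alpha$, $\muc$ becomes an explicit bilinear map on $T^\ast V$, and one can then adapt the argument of Theorem \ref{GlobalEstimate} (carried out in Section \ref{Proof} for $|\mu|^2$) weight by weight to obtain an inequality of the form $\nnorm{\grad|\muc|^2}^2 \geq \phi(|\muc|^2)$ holding globally on $T^\ast V$, for some positive function $\phi$ of the appropriate \loj\ type.

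The main obstacle is precisely the globality of this inequality. Because $\muc$ is not proper---already in the basic example of $S^1 \ractson \CC^{N+1}$, the level sets $\mur^{-1}(\alpha) \cap \{|\muc| \leq C\}$ are unbounded---one cannot rely on any compactness argument to rule out trajectories that escape to infinity along directions where $|\muc|$ stays small. The inequality must therefore control $\nnorm{\grad|\muc|^2}$ uniformly along these noncompact directions, which is what the weight-space reduction in the abelian case accomplishes but the cross terms in the nonabelian case do not, leaving the full Conjecture \ref{FlowClosedConjecture} beyond the reach of this method.
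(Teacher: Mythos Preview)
Your proposal is correct and matches the paper's one-line proof: the theorem follows immediately from Proposition~\ref{FlowClosedness} together with the torus case of Theorem~\ref{GlobalEstimate}. The extra material on conservation of $\mur$ and a weight-by-weight analysis is not needed; the paper's actual route to the \loj\ inequality for $|\muc|^2$ is Lemma~\ref{CrossTermVanish}, which gives $\inner{\nabla|\mu_2|^2}{\nabla|\mu_3|^2}=0$ and hence reduces the estimate to the already-established real case applied to each $\mu_i$ separately.
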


This is an immediate consequence of Proposition \ref{FlowClosedness}
and Theorem \ref{GlobalEstimate}.
To appreciate why this result requires some effort, let us contrast it with
the analogous statement for $|\mu|^2$, where $\mu$ is an ordinary moment
map associated to a linear action on a Hermitian vector space $V$. It 
is immediate from the definitions that $\nabla |\mu|^2 = 2 I v_{\mu}$.
Hence, the gradient trajectory through a point $x$ always remains in the
$K_\CC$ orbit through $x$. Thus it suffices to restrict attention to the
$K_\CC$ orbits. Flow-closedness is then a consequence of the  following
proposition.

\begin{proposition}[{Sjamaar \cite[Lemma 4.10]{SjamaarConvexity}}] 
\label{SjamaarTrick}
Suppose $K_\CC$ acts linearly
on $V$, let $\{g_n\}$ be a sequence of points in $K_\CC$, and 
let $\{x_n\}$ be a bounded sequence in $V$. Then the sequence
$\{g_n x_n\}$ is bounded if $\{\mu(g_n x_n)\}$ is bounded.
\end{proposition}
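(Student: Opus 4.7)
The plan is to reduce to the case of a torus action via the polar decomposition of $K_\CC$, diagonalize the action in weight-space coordinates, and then induct on the rank of the maximal torus.

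First I would use the Cartan polar decomposition $K_\CC = K \cdot \exp(i\fk)$ to write $g_n = k_n \exp(i\xi_n)$ with $k_n \in K$ and $\xi_n \in \fk$. Unitarity of $K$ gives $|g_n x_n| = |\exp(i\xi_n) x_n|$, and $K$-equivariance of $\mu$ together with $K$-invariance of the chosen inner product on $\fkd$ gives $|\mu(g_n x_n)| = |\mu(\exp(i\xi_n) x_n)|$. Writing $\xi_n = \Ad(l_n) \eta_n$ with $\eta_n$ in the Lie algebra $\ft$ of a maximal torus $T \subset K$, and absorbing $l_n^{-1}$ into the (still bounded) sequence $x_n$, this reduces the problem to the case $g_n = \exp(i\eta_n)$ with $\eta_n \in \ft$. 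Decomposing $V = \bigoplus_\alpha V_\alpha$ into weight spaces for $T$, so that $\exp(i\eta)$ acts on $V_\alpha$ by the scalar $e^{\lambda_\alpha(\eta)}$ for some $\lambda_\alpha \in \ftd$, I obtain the explicit formulas
\[ |g_n x_n|^2 = \sum_\alpha e^{2\lambda_\alpha(\eta_n)}\, |x_{n,\alpha}|^2, \qquad \mu_T(g_n x_n) = \tfrac{1}{2} \sum_\alpha \lambda_\alpha\, e^{2\lambda_\alpha(\eta_n)}\, |x_{n,\alpha}|^2, \]
where $\mu_T$ is the $T$-moment map obtained by composing $\mu$ with the natural projection $\fkd \to \ftd$, and is automatically bounded on the sequence.

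The main step is an induction on the rank of $T$. Assume for contradiction that $|g_n x_n| \to \infty$; since $x_n$ is bounded, this forces $|\eta_n| \to \infty$, so after passing to a subsequence I may assume $\zeta := \lim \eta_n/|\eta_n|$ exists with $|\zeta|=1$. Pairing $\mu_T(g_n x_n)$ with $\zeta$ gives
\[ \inner{\mu_T(g_n x_n)}{\zeta} = \tfrac{1}{2} \sum_\alpha \lambda_\alpha(\zeta)\, e^{2\lambda_\alpha(\eta_n)}\, |x_{n,\alpha}|^2. \]
Terms with $\lambda_\alpha(\zeta) < 0$ satisfy $e^{2\lambda_\alpha(\eta_n)} \to 0$ and are automatically bounded; terms with $\lambda_\alpha(\zeta) > 0$ are non-negative with strictly positive coefficient, so their bounded sum forces each individual quantity $e^{2\lambda_\alpha(\eta_n)}|x_{n,\alpha}|^2$ to be bounded. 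Hence the only possible source of blow-up is the invariant subspace $W := \bigoplus_{\lambda_\alpha(\zeta) = 0} V_\alpha$, on which $\zeta$ acts trivially, so the restricted action of $\exp(i\eta_n)$ factors through the subtorus $T' \subset T$ with Lie algebra $\zeta^\perp$, of rank one less. Applying the inductive hypothesis to the $T'$-action on $W$ with the bounded projected sequences $(x_n)_W$ and $\mu_{T'}(g_n (x_n)_W)$ bounds the contribution from $W$ as well, contradicting $|g_n x_n| \to \infty$.

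The main obstacle is setting up this induction cleanly. One must verify that the reduction preserves the hypotheses---boundedness of the moment map for $T' \ractson W$ follows from the observation that $\mu_{T'}|_W$ is just the restriction of $\mu_T|_W$ to $\ft' = \zeta^\perp$, which is bounded by the bound on $\mu_T$---and one must dispose of the degenerate case in which \emph{every} weight of $V$ vanishes on $\zeta$. In that exceptional case $\zeta$ acts trivially on all of $V$, so $T$ factors through a proper quotient torus of strictly smaller rank that acts effectively on $V$, and one applies the inductive hypothesis to this quotient instead.
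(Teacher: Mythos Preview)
The paper does not prove this proposition; it is quoted as \cite[Lemma~4.10]{SjamaarConvexity} and used only to motivate the subsequent discussion of flow-closedness. So there is no ``paper's own proof'' to compare against.

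Your argument is correct. The reduction to the maximal torus via the Cartan decomposition and conjugation into $\ft$ is clean, and the induction on rank goes through. Two places deserve a word of care, both of which you handle but could state more explicitly. First, when you pass from the full moment map $\mu$ to the $T$-moment map $\mu_T$, note that the constant term $\alpha$ in $\mu$ is irrelevant to boundedness, so your weight-space formula is valid for this purpose. Second, in the inductive step you need $\mu_{T'}\bigl((g_n x_n)_W\bigr)$ bounded, not just $\mu_T(g_n x_n)$ bounded; this follows because you have already shown that each term $e^{2\lambda_\alpha(\eta_n)}|x_{n,\alpha}|^2$ with $\lambda_\alpha(\zeta)\neq 0$ is individually bounded, so the contribution of those weights to the vector $\mu_T(g_n x_n)\in\ftd$ is bounded, and hence so is the remaining $W$-part. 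Your sentence about $\mu_{T'}|_W$ being a restriction of $\mu_T|_W$ is slightly telegraphic on this point, but the logic is sound.

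The degenerate case (all weights vanish on $\zeta$) is really the same as the main inductive step with $W=V$, so no separate treatment is needed beyond what you wrote.
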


Now consider the gradient flow of $|\muc|^2$ on $T^\ast V$. Since 
$|\muc|^2 = |\mu_2|^2 + |\mu_3|^2$, we see that
$\nabla |\muc|^2 = 2 I_2 v_{\mu_2} + 2 I_3 v_{\mu_3}$.
The problem is immediate: due to the simultaneous appearance of $I_2$ and $I_3$,
the gradient trajectories appear to lie
on the orbits of a ``quaternification'' of $K$,
but in general, no suitable quaternification of $K$ exists.
More precisely, if we let  $\mathcal{D}$ be
the distribution on $V$ generated by the $K$ action
(i.e.\ the integrable distribution whose leaves are the $K$-orbits), then
$\mathcal{D}_\CC := \mathcal{D} + I \mathcal{D}$ is integrable,
whereas the distribution
$\mathcal{D}_\HH := \mathcal{D} + I_1 \mathcal{D} 
 + I_2 \mathcal{D} + I_3 \mathcal{D}$ on $T^\ast V$ is not integrable in general.

In a few specific examples, a detailed study of $\mathcal{D}_\HH$ leads to
definite conclusions about the gradient flow, but at present
we cannot prove a general result in this direction. In any case,
we will not pursue this approach in the present work (but see 
Remarks \ref{CrossTermRemark} and \ref{SubriemannianRemark}, which are related
to this problem).

With these considerations in mind, it would be illuminating to have a proof
of the flow-closedness of $|\mu|^2$ that relies neither on arguments
involving $K_\CC$ nor on properness, as such a proof might
generalize to the hyperk\"ahler setting. This is exactly the content
of our main result, Theorem \ref{GlobalEstimate}. The key idea is to
relax the assumption of properness to the weaker condition of satisfying
a certain gradient inequality, which we introduce next.

\section{\loj\ Inequalities}
\label{LojasiewiczInequalities}
We begin by giving a precise definition of the type of inequality we
wish to consider, as well as its most important consequence.

\begin{definition} Let $(M, g)$ be a complete Riemannian manifold. A smooth
real-valued function $f$ on $M$ is said to satisfy a
\emph{global \loj\ inequality} if for any real number $f_c$ in the closure of
the image of $f$, there exist constants $\epsilon > 0$, $k > 0$, and
$0 < \alpha < 1$, such that
\[ |\grad f(x)| \geq k|f(x) - f_c|^\alpha, \]
for all $x \in M$ such that $|f(x) - f_c| < \epsilon$.
\end{definition}

\begin{remark} The term global \loj\ inequality is borrowed from
\cite{JKSGlobalLojasiewicz}; however, we use it in a different way, as
we are concerned specifically with bounding the gradient of $f$.
\end{remark}

\begin{proposition} 
\label{FlowClosedness}
Suppose $f$ satisfies a \gli\ and is bounded below.
Then $f$ is flow-closed.
\end{proposition}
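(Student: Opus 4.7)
The goal is to prove that every positive-time gradient trajectory $\gamma$ of $-\nabla f$ is contained in a compact subset of $M$. The plan is the now-classical \loj\ strategy: use the gradient inequality to control the \emph{arclength} of $\gamma$, then invoke completeness of $M$ to promote bounded arclength to precompactness.

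First I would let $\gamma : [0, T_{\max}) \to M$ be a maximal trajectory of $-\nabla f$ starting at some $x_0$. Since $\frac{d}{dt} f(\gamma(t)) = -|\nabla f(\gamma(t))|^2 \le 0$ and $f$ is bounded below, the values $f(\gamma(t))$ form a bounded nonincreasing function of $t$, converging to some real number $f_c$ as $t \to T_{\max}^-$. This $f_c$ lies in the closure of the image of $f$, so the hypothesis supplies constants $\epsilon > 0$, $k > 0$, and $0 < \alpha < 1$ with $|\nabla f(x)| \ge k\, |f(x) - f_c|^\alpha$ whenever $|f(x) - f_c| < \epsilon$. Choose $T_0 < T_{\max}$ so that this holds along $\gamma$ for all $t \in [T_0, T_{\max})$. (If $f(\gamma(t_0)) = f_c$ at some finite time, then monotonicity and $-\frac{d}{dt} f = |\nabla f|^2$ force $\gamma$ to be constant thereafter and the conclusion is immediate; so I may assume $f(\gamma(t)) > f_c$ strictly on $[T_0, T_{\max})$.)

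The key step is the \loj\ length argument: differentiating $h(t) := (f(\gamma(t)) - f_c)^{1-\alpha}$ and applying the inequality yields
\[ h'(t) = -(1-\alpha)\bigl(f(\gamma(t)) - f_c\bigr)^{-\alpha} |\nabla f(\gamma(t))|^2 \le -(1-\alpha)\, k\, |\nabla f(\gamma(t))|. \]
Since $|\dot\gamma(t)| = |\nabla f(\gamma(t))|$, integrating bounds the arclength of $\gamma|_{[T_0, t]}$ by $h(T_0)/((1-\alpha)k)$, uniformly in $t \in [T_0, T_{\max})$. Thus $\gamma$ has \emph{finite total arclength}, and in particular is Cauchy as $t \to T_{\max}^-$.

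To finish, on $[0, T_0]$ the image of $\gamma$ is compact automatically, and on $[T_0, T_{\max})$ it is bounded in the Riemannian distance by the arclength estimate; hence the whole orbit lies in a metrically bounded subset of $M$. Completeness of $M$ (Hopf--Rinow) promotes this to precompactness, so $\gamma$ has a limit point as $t \to T_{\max}^-$, and the standard continuation principle for ODEs forces $T_{\max} = \infty$. The maximal trajectory therefore exists on $[0, \infty)$ and its closure is a compact set containing $\gamma$, proving flow-closedness. The only mildly delicate point is showing that a trajectory cannot escape to infinity in finite time, handled by combining the arclength bound on the \loj\ portion with the trivial compactness of the initial segment; everything else is the length argument applied in the generality permitted by the global inequality.
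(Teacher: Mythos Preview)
Your proof is correct and follows essentially the same \L ojasiewicz arclength strategy as the paper: both arguments bound the length of the trajectory past time $T_0$ by a constant times $(f(\gamma(T_0))-f_c)^{1-\alpha}$, the paper via the change of variables $t\mapsto f(x(t))$ in $\int |\nabla f|\,dt$ and you via differentiating $h(t)=(f(\gamma(t))-f_c)^{1-\alpha}$, which are the same computation. Your version is slightly more thorough in that you explicitly handle finite-time blowup (forcing $T_{\max}=\infty$) and invoke Hopf--Rinow for precompactness, points the paper leaves implicit.
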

\begin{proof} Let $x(t)$ be a trajectory of $-\grad f$.
Since $f(x(t))$ is descreasing and bounded below,
$\lim_{t \to \infty} f(x(t))$ exists. Call this limit $f_c$.
Let $\epsilon, k,$ and $\alpha$ be the constants appearing in the global
\loj\ inequality for $f$ with limit $f_c$.
For large enough $T$, we have $|f(x(t)) - f_c| < \epsilon$ 
whenever $t > T$. Consider $t_2 > t_1 > T$, and let $f_1 = f(x(t_1))$ and 
$f_2 = f(x(t_2))$. Since $\dot{x} = - \grad f$, we have that
\[ d(x(t_1), x(t_2)) \leq \int_{t_1}^{t_2} |\grad f(x(t))| dt, \]
where $d(x,y)$ denotes the Riemannian distance.
By the change of variables $t \mapsto f(x(t))$, we obtain
\begin{eqnarray*}
d(x(t_1), x(t_2)) &\leq& \int_{f_2}^{f_1} |\grad f|^{-1} df \\
&\leq& k^{-1} \int_{f_2}^{f_1} |f - f_c|^{-\alpha} df \\
&=& k^{-1} (1-\alpha)^{-1} \left(|f_1-f_c|^{1-\alpha} - |f_2-f_c|^{1-\alpha} \right)\\
&<& k^{-1} (1-\alpha)^{-1} |f(x(T)) - f_c|^{1-\alpha}.
\end{eqnarray*} 
Since $\alpha < 1$, the last expression can be made arbitrarily small by
taking $T$ sufficiently large, so we see that $\lim_{t \to \infty} x(t)$
exists,
and in particular the gradient trajectory is contained in a compact set.
\end{proof}

This argument establishes flow-closedness directly from the
{\L}ojasiewicz inequality, without appealing to compactness.
Thus it would be sufficient to show that $|\mu|^2$ satisfies such an inequality.
To motivate why we might expect this to be case,
we recall the classical \loj\ inequality.
\begin{theorem}[\loj\ Inequality \cite{BierstoneMilman,Lojasiewicz}]
\label{LojasiewiczInequality} 
Let $f$ be a real analytic function on an open set $U \in \mathbb{R}^N$, and
let $c$ be a critical point of $f$. Then on any compact set $K \subset U$,
there are constants $k > 0$ and $0 < \alpha < 1$ such that the inequality
\[ \norm{\nabla f(x)} \geq k \left|f(x) - f(c)\right|^\alpha \]
holds for all $x \in K$.
\end{theorem}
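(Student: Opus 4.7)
The plan is to reduce to a local statement at a single critical point and then exploit real analyticity through the curve selection lemma of semianalytic geometry. Away from the critical set of $f$ at the level $f(c)$ the inequality is automatic (for any $\alpha < 1$, after choosing $k$ small enough), so by covering $K$ by finitely many neighborhoods the problem reduces to a single critical point, which after translation I take to be $c = 0$ with $f(c) = 0$. The goal is then to find $k, \delta > 0$ and $\alpha \in (0, 1)$ such that $|\grad f(x)| \geq k |f(x)|^\alpha$ for $|x| \leq \delta$.

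I would argue by contradiction. If no such $\alpha$ exists, then for each rational $\alpha \in (0, 1)$ the semianalytic set
\[ A_\alpha = \{ x \in U \suchthat |\grad f(x)|^2 < |f(x)|^{2\alpha} \} \]
has $0$ in its closure. The \emph{curve selection lemma} of \loj\ --- every semianalytic set whose closure contains a point $p$ contains a real analytic arc emanating from $p$ --- then produces, for each such $\alpha$, a real analytic map $\gamma : [0, \varepsilon) \to U$ with $\gamma(0) = 0$ and $\gamma(t) \in A_\alpha$ for $t > 0$.

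Along such an arc the problem becomes one-dimensional. Using analyticity I would expand
\[ f(\gamma(t)) = a t^p + O(t^{p+1}), \qquad \gamma'(t) = v t^{q-1} + O(t^q), \]
with $a, v$ nonzero and $p, q \geq 1$ integers (the degenerate case $f \circ \gamma \equiv 0$ is handled by replacing $f$ with $|\grad f|^2$ in the argument). The chain rule gives
\[ |\grad f(\gamma(t))| \geq \frac{\left| (f \circ \gamma)'(t) \right|}{|\gamma'(t)|} \geq C \, t^{p - q}, \]
while membership in $A_\alpha$ forces $|\grad f(\gamma(t))| < |f(\gamma(t))|^\alpha \sim |a|^\alpha t^{p\alpha}$. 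Comparing exponents as $t \to 0^+$ requires $p - q \geq p\alpha$, that is, $\alpha \leq 1 - q/p$. A key finiteness input --- the stratification theorem for subanalytic sets --- ensures that the rationals $q/p$ arising from arcs constructed this way are bounded below by a positive $\rho_0$ independent of $\alpha$. Taking $\alpha \in (1 - \rho_0, 1)$ then violates $\alpha \leq 1 - q/p$, producing the desired contradiction.

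The main obstacle is the uniform lower bound on $q/p$ across all the arcs $\gamma$: one must rule out the possibility that these ratios collapse to zero as $\alpha \to 1^{-}$. This is precisely where real analyticity (not merely smoothness) becomes essential --- the example $f(y) = e^{-1/y^2}$ shows that for merely $C^\infty$ functions the analogous \loj\ exponent can equal $1$. An alternative approach, closer to Bierstone-Milman, avoids curve selection entirely and uses Hironaka's resolution of singularities to bring $f - f(c)$ locally into monomial form $u(y) y_1^{a_1} \cdots y_n^{a_n}$, where the gradient estimate is a direct and elementary computation; on either route, the technical content is hidden inside a substantial piece of real analytic geometry, with the estimate itself following in a page or so once those tools are granted.
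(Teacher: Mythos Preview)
The paper does not prove this statement at all: Theorem~\ref{LojasiewiczInequality} is the classical \L ojasiewicz gradient inequality, quoted from \cite{BierstoneMilman,Lojasiewicz} and used as a black box. There is therefore no ``paper's own proof'' to compare your proposal against; the author only invokes the result (via Lerman's local normal form argument) to motivate the notion of a global \L ojasiewicz inequality before proving the genuinely new Theorem~\ref{GlobalEstimate} by entirely different, bare-hands methods.

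As for your sketch on its own terms: the overall shape is the standard one, and you correctly identify that the whole difficulty is concentrated in the uniform lower bound on the ratios $q/p$ across all the arcs produced by curve selection. But note that as written your contradiction argument has a quantifier problem: for each $\alpha$ you produce a \emph{different} arc $\gamma_\alpha$, yielding $\alpha \le 1 - q_\alpha/p_\alpha$, and nothing yet prevents $q_\alpha/p_\alpha \to 0$ as $\alpha \to 1^-$. Invoking ``the stratification theorem'' here is a placeholder rather than an argument --- one must actually show that the possible leading exponents along analytic arcs through $0$ form a finite set (or are otherwise bounded away from the bad regime), which is exactly the nontrivial content of the theorem. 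Your alternative via Hironaka resolution is the cleaner route and is essentially what \cite{BierstoneMilman} do: once $f$ is monomialized the estimate is a one-line calculus exercise, and all the depth is absorbed into resolution of singularities.
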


If $f$ is a \emph{proper} real analytic function, then this immediately
implies that $f$ satisfies a global \loj\ inequality as defined above.
Since our primary concern is Morse theory, we can relax the assumption
of analyticity as follows.
\begin{proposition} Suppose $f$ is a proper Morse function. Then $f$ 
satisfies a global \loj\ inequality.
\end{proposition}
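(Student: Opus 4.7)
The plan is to reduce the claim to two local estimates: a trivial one at regular values and a Morse-lemma estimate at critical values, and then glue them using properness. First note that since $f$ is proper and continuous, its image is closed in $\RR$, so any $f_c$ in the closure of the image equals $f(c_0)$ for some $c_0 \in M$. Moreover, because $f$ is Morse its critical points are isolated, and properness forces the preimage of any bounded interval to contain only finitely many critical points, so the set of critical values is a discrete closed subset of $\RR$.

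Fix $f_c$ and split into two cases. If $f_c$ is not a critical value, then by the discreteness just noted we may choose $\epsilon_0 > 0$ so small that $f^{-1}([f_c-\epsilon_0, f_c+\epsilon_0])$ is compact (by properness) and contains no critical points. On this compact set the continuous positive function $|\grad f|$ attains a positive minimum $m$, and choosing any $\alpha \in (0,1)$ and $k \leq m/\epsilon_0^\alpha$ gives the desired inequality on the whole sublevel band. If $f_c$ is a critical value, properness again implies that $f^{-1}(f_c)$ contains only finitely many critical points $c_1, \ldots, c_n$. Near each $c_i$ the Morse lemma provides a coordinate chart in which $f = f_c + \tfrac{1}{2}\sum_j \varepsilon_j x_j^2$ with $\varepsilon_j = \pm 1$, so in the Euclidean coordinate metric we have $|f - f_c| \leq \tfrac{1}{2}|x|^2$ while $|\grad f| = |x|$, yielding $|\grad f| \geq \sqrt{2}\,|f-f_c|^{1/2}$ on a chart neighborhood. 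Comparing the pulled-back Riemannian metric with the Euclidean one on a relatively compact chart (they are mutually bounded up to constants) transfers this to an inequality $|\grad f|_g \geq k_i\,|f-f_c|^{1/2}$ on some smaller neighborhood $U_i$ of $c_i$.

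To glue these estimates, shrink $\epsilon_0 > 0$ so that the compact set $A := f^{-1}([f_c-\epsilon_0, f_c+\epsilon_0]) \setminus \bigcup_i U_i$ contains no critical points (possible since the only critical points of $f$ mapping near $f_c$ lie in the $U_i$'s, for $\epsilon_0$ small enough, by discreteness of critical values and properness). On $A$ the quantity $|\grad f|$ is bounded below by some $m > 0$, as before, so we get a uniform bound $|\grad f| \geq k'\,|f-f_c|^{1/2}$ on $A$. Taking $\alpha = 1/2$, $\epsilon = \epsilon_0$, and $k = \min(k_1, \ldots, k_n, k')$ yields the global \loj\ inequality.

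The main step is the Morse-lemma estimate near a critical value and its transfer from Euclidean to Riemannian gradient, but this is standard; the rest is bookkeeping with properness. The only mild subtlety is ensuring that, in the critical-value case, no ``extra'' critical points sneak into the band $|f-f_c| < \epsilon$ outside the chosen neighborhoods $U_i$, which is handled by choosing $\epsilon$ small enough to isolate $f_c$ among critical values and then invoking properness once more.
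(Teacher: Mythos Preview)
Your proof is correct and follows the same overall scheme as the paper: use the Morse lemma to obtain a local estimate near each critical point, then globalize using properness. The one genuine difference is in how the local estimate is obtained. The paper simply observes that in Morse coordinates $f$ is a polynomial, hence real analytic, and then invokes the classical \loj\ inequality (Theorem~\ref{LojasiewiczInequality}) as a black box. You instead carry out the elementary computation directly on the quadratic normal form, obtaining the explicit exponent $\alpha = 1/2$ and then transferring to the Riemannian metric by the usual equivalence-of-metrics argument on a relatively compact chart. Your route is more self-contained (it avoids appealing to the rather deep classical \loj\ theorem) and yields a sharper, explicit exponent; the paper's version is terser but leans on heavier machinery. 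Either way the content is the same: Morse lemma locally, compactness to glue.
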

\begin{proof} By the Morse lemma, near each critical point we can choose
coordinates in which $f$ is real analytic. Hence $f$ satisfies the classical
\loj\ inequality near each critical point, and since $f$ is proper this can
be extended to a global inequality.
\end{proof}

For a moment map $\mu$, the function $|\mu|^2$ is in general
neither Morse nor Morse-Bott, but minimally degenerate
(in the sense of \cite{KirwanCohomology}). Nonetheless, we can still
obtain a global \loj\ inequality whenever $\mu$ is proper.
\begin{proposition} Suppose $\mu$ is a moment map associated to an action 
of a compact Lie group, and suppose furthermore that $\mu$ is proper.
Then $|\mu|^2$ satisfies a \gli.
\end{proposition}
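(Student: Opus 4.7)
The plan is to reduce the statement to the classical \loj\ inequality (Theorem \ref{LojasiewiczInequality}) by a compactness and patching argument, using properness to extract a finite cover.

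First, since $\mu$ is proper, so is $f := |\mu|^2$. Hence for any $f_c$ in the closure of the image of $f$ and any $\epsilon > 0$, the set
\[ N_\epsilon := \{ x \in M \suchthat |f(x) - f_c| \leq \epsilon \} \]
is a compact subset of $M$. Our goal is to establish the \loj\ inequality on $N_\epsilon$ for some $\epsilon > 0$.

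Next, I would produce a local \loj\ inequality in a neighborhood of each point of $N_\epsilon$. There are two cases. If $x \in N_\epsilon$ is not critical, then $|\grad f|$ is continuous and positive at $x$, so on a small neighborhood of $x$ we have $|\grad f| \geq k$ for some $k > 0$, and the inequality holds trivially (with any exponent $\alpha \in (0,1)$, after perhaps shrinking the neighborhood so that $|f - f_c|$ is bounded above there). If $x$ lies in a critical component $C$, then Kirwan's local model for $|\mu|^2$ near $C$ (derived from the equivariant symplectic slice theorem) expresses $f$ in suitable coordinates as a polynomial function $f(C) + P$, where $P$ is a finite sum of squares of polynomials in the slice variables. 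In particular $f$ is real analytic in these coordinates, and Theorem \ref{LojasiewiczInequality} supplies constants $k_C > 0$ and $\alpha_C \in (0,1)$ such that $|\grad f(x)| \geq k_C |f(x) - f_c|^{\alpha_C}$ throughout the chart.

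Finally, cover $N_\epsilon$ by finitely many such neighborhoods—this is possible by the compactness of $N_\epsilon$ and hence of $\Crit(f) \cap N_\epsilon$. Taking $k := \min_C k_C$ (throwing in the lower bounds from the non-critical neighborhoods) and $\alpha := \max_C \alpha_C$ produces a single pair $(k, \alpha)$ for which the inequality $|\grad f(x)| \geq k |f(x) - f_c|^\alpha$ holds on all of $N_\epsilon$. Shrinking $\epsilon$ so that $N_\epsilon$ lies inside the union of our finite cover completes the proof.

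The main technical obstacle is justifying the real analyticity of $|\mu|^2$ in a neighborhood of each critical component, since the symplectic manifold $M$ is not assumed to be real analytic a priori. The resolution is that the Marle--Guillemin--Sternberg normal form expresses $f$ near $C$ in terms of an auxiliary linear moment map, for which $|\mu|^2$ is automatically polynomial; unpacking this normal form from \cite{KirwanCohomology} is the only nontrivial point. In the linear setting that is the focus of the paper, $|\mu|^2$ is a polynomial globally and this subtlety disappears.
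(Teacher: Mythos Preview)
Your proposal is correct and follows essentially the same route as the paper, which simply cites Lerman \cite{LermanGradientFlow} for the local normal form (yielding real analyticity of $|\mu|^2$ near each critical component) and then invokes properness to patch the classical \loj\ inequality into a global one. One small point left implicit in your write-up: you must first shrink $\epsilon$ so that $f_c$ is the only critical value of $f$ in $(f_c-\epsilon,f_c+\epsilon)$ (possible because properness makes the set of critical values locally finite), since the classical \loj\ inequality near a component $C$ with $f(C)\neq f_c$ controls $|f-f(C)|$ rather than $|f-f_c|$, and indeed the desired inequality fails outright at any critical point with value different from $f_c$.
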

\begin{proof} 
Lerman \cite{LermanGradientFlow} uses local normal forms to show
that $|\mu|^2$ is real analytic in a neighborhood of each component of
its critical set, and hence satisfies the classical \loj\ inequality.
Since $\mu$ is proper, this can be extended to a global inequality.
\end{proof}

Since we would like to drop the assumption of properness,  it is natural to ask
whether there are examples of moment maps which are \emph{not} proper
but nevertheless satisfy a global \loj\ inequality. 
The answer to this question is in the affirmative, at least when the action 
is linear. The following is our main theorem, which we prove in 
Section \ref{Proof}.
\begin{theorem} 
\label{GlobalEstimate}
Let $\mu$ be a moment map associated to a unitary representation
of a compact group $K$ on a Hermitian vector space $V$.
Then $f = |\mu|^2$ satisfies a 
\gli. In detail, for every $f_c \geq 0$, there exist constants 
$k > 0$ and $\epsilon > 0$ such that
\begin{equation} \label{GlobalInequality}
|\nabla f(x)| \geq k|f(x) - f_c|^\frac{3}{4}
\end{equation}
whenever $|f(x) - f_c| < \epsilon$.
If $K$ is a torus, then the same holds for the functions $|\muc|^2$
and $|\muhk|^2$ associated to the action of $K$ on $T^\ast V$.
\end{theorem}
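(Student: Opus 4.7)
The plan is to exploit the polynomial and homogeneous structure of $\mu$ to reduce the global \loj\ inequality to compact estimates together with the classical \loj\ inequality (Theorem~\ref{LojasiewiczInequality}). Because the action is linear and unitary, $\mu : V \to \fkd$ is a homogeneous quadratic polynomial map, so $f = |\mu|^2$ is a real polynomial of degree $4$ and $|\nabla f(x)|^2 = 4|v_{\mu(x)}(x)|^2$ is a real polynomial of degree $6$. In particular $f(tx) = t^4 f(x)$ and $|\nabla f(tx)|^2 = t^6 |\nabla f(x)|^2$ for $t \in \RR$.

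For the case $f_c = 0$ this scaling is decisive: both sides of the target inequality $|\nabla f|^2 \geq k^2 f^{3/2}$ scale as $t^6$, so the ratio $|\nabla f(x)|^2/f(x)^{3/2}$ depends only on $x/|x|$ and the inequality reduces to a statement on the compact unit sphere $\Sigma \subset V$. On a neighborhood of the real-algebraic variety $\mu^{-1}(0) \cap \Sigma$ the classical \loj\ inequality applied to $f|_\Sigma$ yields some exponent $\alpha_S < 1$; to pin this down to $3/4$ I would use the sum-of-squares structure. Writing $|\nabla f|^2 = 4 \langle D\mu(x) D\mu(x)^\ast \mu(x), \mu(x)\rangle$, the Gram-type operator $D\mu(x) D\mu(x)^\ast$ is quadratic in $x$, and transverse to the stabilizer directions its eigenvalues grow like $|x|^2$. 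Combined with $|\mu(x)| = O(|x|^2)$, this forces the lower bound $|\nabla f|^2 \gtrsim |\mu|^2 \cdot |\mu| = f^{3/2}$ near $\mu^{-1}(0)$, giving the exponent $3/4$ rather than a generic $\alpha<1$.

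For $f_c > 0$ I would invoke the Kirwan-Ness decomposition: $\Crit(f) = \bigsqcup_\beta C_\beta$ is a finite union of smooth components with critical values $\{|\beta|^2\}$, so only finitely many $f_c$ require work. Near each $C_\beta$, Lerman's local normal form makes $f$ real-analytic and the classical \loj\ inequality gives a local estimate on compact neighborhoods. The main obstacle, which I expect to be the hardest step, is extending this to the potentially unbounded set $\{x : |f(x) - f_c| < \epsilon\}$: I must rule out sequences $x_n$ with $|x_n| \to \infty$, $f(x_n) \to f_c$, and $|\nabla f(x_n)| \cdot |f(x_n) - f_c|^{-3/4} \to 0$. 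For this I would combine a polynomial-degree comparison between the degree-$6$ polynomial $|\nabla f|^2$ and the degree-$4$ polynomial $f$ with Sjamaar's compactness argument (Proposition~\ref{SjamaarTrick}) applied to $K_\CC$-orbit closures, forcing the asymptotic behavior along any such sequence to reduce to the local analysis near one of the $C_\beta$.

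The hyperk\"ahler extension for $K$ a torus follows by the same scheme: each component of $\muc$ and $\muhk$ is a homogeneous quadratic polynomial on $T^\ast V$, so the scaling reduction, Kirwan-Ness/Lerman local analysis, and asymptotic comparison all carry over. The abelian hypothesis is essential because it eliminates the nonabelian cross-terms flagged in Remark~\ref{CrossTermRemark}, letting the splitting $|\muhk|^2 = |\mur|^2 + |\muc|^2$ interact cleanly with the gradient and allowing a component-wise application of the symplectic argument.
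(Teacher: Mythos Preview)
Your proposal rests on a false premise: you assert that $\mu$ is a \emph{homogeneous} quadratic map, so that $f(tx)=t^4 f(x)$ and both sides of the target inequality scale identically. But the moment map for a linear action has the form $\mu(x)=\sum_a\tfrac{1}{2}\langle ie_a x,x\rangle-\alpha$ with $\alpha$ an arbitrary central constant (see equation~(\ref{OrdinaryMomentMapFormula})), and the theorem is stated for this general $\mu$. With $\alpha\neq 0$ the function $f=|\mu|^2$ is not homogeneous, your scaling reduction to the unit sphere collapses, and the entire $f_c=0$ argument fails. This is not a technicality: Remark~\ref{FirstNeemanRemark} explains that the homogeneous case is exactly Neeman's earlier result, and that the point of Theorem~\ref{GlobalEstimate} is to remove that assumption. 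The paper's Lemma~\ref{KeyEstimate2} and Lemma~\ref{RhoPhiLemma} exist precisely to handle the affine (non-homogeneous) situation.

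Your treatment of $f_c>0$ has further gaps. First, the claim that ``only finitely many $f_c$ require work'' is not justified: the inequality must hold for \emph{every} $f_c\geq 0$, and for a non-critical value the level set $f^{-1}(f_c)$ need not be compact, so $|\nabla f|>0$ pointwise does not give a uniform lower bound. Second, Lerman's local normal form yields \emph{some} exponent $\alpha<1$, not the specific $3/4$; you do not explain how to upgrade it. Third, the step you correctly flag as hardest---ruling out bad sequences with $|x_n|\to\infty$---is left as a vague combination of ``polynomial-degree comparison'' and Sjamaar's Proposition~\ref{SjamaarTrick}. But the paper explicitly motivates the theorem as a way to \emph{avoid} Sjamaar's $K_\CC$-orbit argument, since that argument does not carry over to $|\muc|^2$ or $|\muhk|^2$ (see the discussion after Proposition~\ref{SjamaarTrick}). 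The paper's actual proof proceeds quite differently: it reduces to a maximal torus, then for each $\mu_c$ covers the unit sphere by neighborhoods in which either the stabilizer is trivial (handled by Lemma~\ref{KeyEstimate1} plus the affine estimate Lemma~\ref{KeyEstimate2}) or one can split $\fk=\fk_1\oplus\fk_2$ and induct on $\dim K$.
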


\begin{remark} \label{FirstNeemanRemark}
This theorem is a generalization of
\cite[Theorem A.1]{NeemanQuotientVarieties}.
However, in \cite{NeemanQuotientVarieties}, it is assumed that the 
constant term in the moment map is chosen so that $f$ is homogeneous
(see equation \ref{OrdinaryMomentMapFormula}),
leading to an inequality of the form
\[ |\grad f| \geq k f^\frac{3}{4}, \]
which holds on all of $V$. Both sides are homogeneous of the same degree,
so that it suffices to prove the inequality on the unit sphere, allowing
the use of compactness arguments. In Theorem \ref{GlobalEstimate}, we
make no such assumption, and this complicates several steps of the proof.
\end{remark}

\begin{remark} If $X \subset V$ is a complex subvariety, then it is easy to
see that Theorem \ref{GlobalEstimate} implies that the restriction
of $f$ to $X$ satisfies a global \loj\ inequality. Similarly,
if $X \subset T^\ast V$ is a hyperk\"ahler subvariety, we deduce the inequality
for the restrictions of $|\muc|^2$ and $|\muhk|^2$.
\end{remark}

\begin{remark} In \cite{GradientSemialgebraic} and \cite{KurdykaGradients} it
is shown that certain classes of functions satisfy similar global {\L}ojasiewicz 
inequalities; indeed this was the motivation to consider such inequalities. 
However, these general theorems cannot rule out the possibility $\alpha \geq 1$,
which is not sharp enough to prove the boundedness of all gradient
trajectories. In this sense, the real content of 
Theorem \ref{GlobalEstimate} is the bound on the exponent. 
\end{remark}


In what follows,
let $K$ be a compact group acting unitarily on a Hermitian vector space $V$,
with moment map $\mu$. Note that since the action is linear, we have
$H_K^\ast(V) = H_K^\ast(\mathrm{point}) =: H_K^\ast$. For $\alpha \in \fkd$
and $\beta \in \fkd_\CC$, we denote
\begin{eqnarray}
X(\alpha) &:=& V \reda{\alpha} K, \\
M(\alpha, \beta) &:=& T^\ast V \rreda{(\alpha, \beta)} K.
\end{eqnarray}

\begin{corollary}
\label{DeformationRetract} Let $f = |\mu|^2$.
Then for each component $C$ of the 
critical set of $f$, the gradient flow defines a $K$-equivariant homotopy from 
the stable manifold $S_C$ to the critical set $C$. If $K$ is abelian, then
the same holds for the functions $|\muc|^2$ and $|\muhk|^2$.
\end{corollary}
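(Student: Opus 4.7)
The plan is to combine the global \loj\ inequality of Theorem \ref{GlobalEstimate} with the length estimate derived in the proof of Proposition \ref{FlowClosedness} in order to show that every gradient trajectory of $-\grad f$ converges to a well-defined point of the critical set, and that this point depends continuously on the initial condition. Once this is established, the gradient flow, suitably reparametrized, will provide the desired homotopy.

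First, I would observe that the proof of Proposition \ref{FlowClosedness} in fact shows more than flow-closedness: for every trajectory $x(t)$, the arc length between times $t_1 < t_2$ is bounded by $k^{-1}(1-\alpha)^{-1}|f(x(t_1)) - f_c|^{1-\alpha}$ once $x(t_1)$ lies in the $\epsilon$-neighborhood supplied by the \gli. Since $\alpha = 3/4 < 1$ and $f(x(t)) \to f_c$ monotonically, this tail length tends to zero, so $r(x) := \lim_{t \to \infty} \varphi_t(x)$ exists, lies in the critical set, and in fact lies in $C$ whenever $x \in S_C$. This defines the retraction $r : S_C \to C$. I would then define
\[
H : S_C \times [0,1] \to S_C, \qquad H(x,s) = \begin{cases} \varphi_{s/(1-s)}(x) & \text{if } s < 1, \\ r(x) & \text{if } s = 1, \end{cases}
\]
where $\varphi_t$ is the flow of $-\grad f$. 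Clearly $H(\cdot, 0) = \mathrm{id}$, $H(\cdot,1) = r$ takes values in $C$, and $H$ restricts to the identity on $C$ for all $s$ since $\grad f$ vanishes on $C$; smoothness of $H$ for $s < 1$ is standard.

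The main obstacle, and the only non-formal step, is continuity of $H$ at points $(x_0, 1)$. The idea is to use the Łojasiewicz tail-length bound as a \emph{uniform} estimate. Given $\eta > 0$, pick $T$ so large that $k^{-1}(1-\alpha)^{-1}|f(\varphi_T(x_0)) - f_c|^{1-\alpha} < \eta/3$; by continuous dependence of the flow on initial conditions, there is a neighborhood $U$ of $x_0$ such that $|f(\varphi_T(x)) - f(\varphi_T(x_0))|$ is small and $d(\varphi_T(x), \varphi_T(x_0)) < \eta/3$ for all $x \in U$. Applying the tail-length bound at $\varphi_T(x)$ then yields $d(\varphi_T(x), r(x)) < \eta/3$, provided $U$ is chosen small enough that $\varphi_T(x)$ still sits in the $\epsilon$-neighborhood of the critical value. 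Combining the three estimates via the triangle inequality shows that $r$ is continuous and, by the same argument applied with $s/(1-s)$ in place of $T$, so is $H$.

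Equivariance is immediate: the metric on $V$ (or $T^\ast V$) and the function $f$ are $K$-invariant, hence $\grad f$ is $K$-equivariant, and the flow $\varphi_t$, its limit $r$, and the homotopy $H$ are all $K$-equivariant. Finally, for $|\muc|^2$ and $|\muhk|^2$ with $K$ a torus, Theorem \ref{GlobalEstimate} supplies a global \loj\ inequality with the same exponent $3/4$, and the above argument applies verbatim; this yields the analogous statement in the abelian hyperk\"ahler setting.
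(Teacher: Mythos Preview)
Your proposal is correct and follows essentially the same route as the paper's own proof: both arguments extend the flow map to infinite time using the tail-length bound derived from the global \loj\ inequality, and establish continuity at the endpoint by splitting $d(\varphi_t(x), r(x_0))$ via an intermediate point $\varphi_T(\cdot)$ and bounding each piece using continuous dependence on initial conditions together with the uniform tail estimate. Your reparametrization onto $[0,1]$ and your explicit remark on $K$-equivariance are cosmetic additions; the substance is the same.
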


\begin{remark}
If $\mu$ is assumed to be proper then this is a special case of the main 
theorem of \cite{LermanGradientFlow};
in the present case we make no such assumption. However, the essential
ingredient of the proof is the \loj\ inequality.
We give the proof below for completeness, but the details
do not differ significantly from \cite{LermanGradientFlow}.
\end{remark}

\begin{proof}
We define a continuous map $F: S_C \times [0, \infty) \to S_C$ by
$(x, t) \mapsto x(t)$, where $x(t)$ is the trajectory of $-\grad f$ beginning
at $x$, evaluated at time $t$.
By Proposition \ref{FlowClosedness}, we can extend
this to a map $F: S_C \times [0, \infty] \to S_C$ by
$(x, \infty) \mapsto \lim_{t \to \infty} x(t)$. This map is the identity
when restricted to $C$, and maps $S_C \times \{\infty\}$ to $C$.
We must verify that this extended map is continuous.

We must show that for any $x_0 \in S_C$ and any sequence $\{(x_n, t_n)\}$ of 
points in $S_C \times [0, \infty]$ satisfying 
$\lim_{n\to\infty} x_n = x_0 \in S_C$ and
$\lim_{n\to\infty} t_n = \infty$ that $\lim_{n\to\infty} x_n(t_n)$ exists.
We will show that it
is equal to $x_c := \lim_{t\to\infty} x_0(t)$. 
Let $f_c$ be the value of $f$ on $C$ and let $\epsilon, k$ be the 
constants appearing in Theorem \ref{GlobalEstimate}.
Given such a sequence,
let $\eta > 0$ and assume $\eta < \epsilon$ but is otherwise arbitrary;
let $T > 0$ be chosen large enough so that $|f(x_0(t)) - f_c| < \eta$
for $t > T$; and let $N > 0$ be chosen to that $t_n > T$ for $n > N$.
The map $S_C \to S_C$ given by $x \mapsto x(T)$ is continuous, 
so we can find $\delta > 0$ such that
\[ |x - x_0| < \delta \implies |x(T) - x_0(T)| < \eta. \]
Since $x \mapsto f(x(T))$ is continuous, we can shrink $\delta$ if 
necessary so that
\[ |x - x_0| < \delta \implies |f(x(T)) - f(x_0(T))| < \eta. \]
Choose $N$ larger if necessary such that $|x_n - x_0| < \delta$ for $n > N$.
We would like to show that $|x_n(t_n) - x_c| \to 0$ as $n \to \infty$.
For $n > N$ we have
\begin{eqnarray*}
|x_n(t_n) - x_c|  &=& |x_n(t_n) - x_n(T) + x_n(T) - x_0(T) + x_0(T) - x_c| \\
&\leq& |x_n(t_n) - x_n(T)| + |x_n(T) - x_0(T)| + |x_0(T) - x_c|.
\end{eqnarray*}
By our choice of $N$, the second term is bounded by $\eta$, and 
we may apply the argument in the proof of Proposition \ref{FlowClosedness}
to show that the third term is bounded by
$k^{-1} |f(x_0(T)) - f_c|^\frac{1}{4} < 4k^{-1} \eta^\frac{1}{4}$.
Finally, to bound the first term we again apply the argument of Proposition
\ref{FlowClosedness} to obtain the bound
\[ 4k^{-1} \left(|f(x_n(T)) - f_c|^\frac{1}{4} 
 - |f(x_n(t_n)) - f_c|^\frac{1}{4} \right) < 
 4k^{-1} |f(x_n(T)) - f_c|^\frac{1}{4}. \]
Since $|x_n - x_0| < \delta$, we have
\begin{eqnarray*}
|f(x_n(T)) - f_c| &=& |f(x_n(T)) - f(x_0(T)) + f(x_0(T)) - f_c| \\
&\leq& |f(x_n(T)) - f(x_0(T))| + |f(x_0(T)) - f_c| \\
&<& 2 \eta.
\end{eqnarray*}
Hence the first term is bounded by 
$4 k^{-1} (2\eta)^\frac{1}{4} < 8k^{-1} \eta^\frac{1}{4}$, and we obtain
\[ |x_n(t_n) - x_c| \leq 12k^{-1} \eta^\frac{1}{4} + \eta. \]
Since we may take $\eta$ arbitrarily small, we see that
$|x_n(t_n) - x_c| \to 0$.
\end{proof}

\begin{corollary} \label{HomotopyCorollary}
If $\beta \in \fk_\CC^\ast$ is regular central, then for any central
$\alpha \in \fkd$, the set $\mur^{-1}(\alpha) \cap \muc^{-1}(\beta)$ is
a $K$-equivariant deformation retract of $\muc^{-1}(\beta)$, and
in particular
\[ H_K^\ast(\muc^{-1}(\beta)) \iso H^\ast( M(\alpha, \beta) ). \]
\end{corollary}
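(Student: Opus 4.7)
The plan is to apply the gradient flow of $f = |\mur - \alpha|^2$ on the submanifold $\muc^{-1}(\beta)$, mirroring the proof of Corollary \ref{DeformationRetract}. First, since $\beta$ is a regular value of the $I_1$-holomorphic map $\muc$, the preimage $\muc^{-1}(\beta)$ is a smooth, $K$-invariant, $I_1$-complex (hence K\"ahler) submanifold of $T^\ast V$. By a standard argument relating the derivative of $\muc$ to the complexified fundamental vector field map via $\omegac$, regularity of $\beta$ for $\muc$ is equivalent to $K_\CC$ (and hence $K$) acting locally freely on $\muc^{-1}(\beta)$, so the infinitesimal stabilizer $\fk_x$ vanishes at every $x \in \muc^{-1}(\beta)$. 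The restriction of $\mur - \alpha$ to $\muc^{-1}(\beta)$ is a moment map for the induced Hamiltonian $K$-action on this K\"ahler submanifold.

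Next, applying Theorem \ref{GlobalEstimate} to the moment map $\mur - \alpha$ on the Hermitian vector space $T^\ast V$ (with complex structure $I_1$) shows that $f = |\mur - \alpha|^2$ satisfies a global \loj\ inequality on $T^\ast V$. Since $\muc^{-1}(\beta)$ is a complex subvariety of $T^\ast V$ with respect to $I_1$, the remark following Theorem \ref{GlobalEstimate} extends this inequality to $f|_{\muc^{-1}(\beta)}$.

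The key structural point is the identification of the critical set. A routine computation gives $\nabla f = 2 I_1 v_{\mur - \alpha}$ on $T^\ast V$, and this ambient gradient is tangent to $\muc^{-1}(\beta)$: the fundamental vector field $v_{\mur - \alpha}$ is tangent because $K$ preserves $\muc^{-1}(\beta)$, and $I_1$ preserves its tangent bundle because $\muc^{-1}(\beta)$ is $I_1$-complex. Hence $\nabla f$ coincides with the intrinsic gradient of $f|_{\muc^{-1}(\beta)}$, and the critical condition reduces to $v_{\mur(x) - \alpha}(x) = 0$, i.e.\ $\mur(x) - \alpha \in \fk_x = 0$. Thus $\Crit(f|_{\muc^{-1}(\beta)}) = \mur^{-1}(\alpha) \cap \muc^{-1}(\beta)$.

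Finally, Proposition \ref{FlowClosedness} applied to $f|_{\muc^{-1}(\beta)}$ yields flow-closedness, and the continuity argument in the proof of Corollary \ref{DeformationRetract} carries over verbatim to produce a continuous $K$-equivariant deformation retraction of $\muc^{-1}(\beta)$ onto $\mur^{-1}(\alpha) \cap \muc^{-1}(\beta)$. The cohomology isomorphism then follows by combining the resulting $K$-equivariant homotopy equivalence with the standard identification $H_K^\ast(\mur^{-1}(\alpha) \cap \muc^{-1}(\beta)) \iso H^\ast(M(\alpha, \beta))$, which is valid over $\QQ$ by local freeness. The main conceptual obstacle is the identification of the critical set; regularity of $\beta$ is precisely what eliminates all higher critical strata on $\muc^{-1}(\beta)$, so the gradient flow retracts the submanifold cleanly onto the single minimum stratum $\mur^{-1}(\alpha) \cap \muc^{-1}(\beta)$.
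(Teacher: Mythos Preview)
Your proof is correct and follows essentially the same route as the paper's: both use the gradient flow of $|\mur-\alpha|^2$ restricted to the $I_1$-complex submanifold $\muc^{-1}(\beta)$, observe that local freeness forces the critical set to be the absolute minimum $\mur^{-1}(\alpha)\cap\muc^{-1}(\beta)$, and then invoke Corollary~\ref{DeformationRetract}. You have simply made explicit two points the paper leaves implicit---the tangency of $\nabla f = 2I_1 v_{\mur-\alpha}$ to $\muc^{-1}(\beta)$ and the appeal to the remark after Theorem~\ref{GlobalEstimate} for the \loj\ inequality on a complex subvariety---but the logical structure is identical.
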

\begin{proof} Since $\beta$ is regular central, $\muc^{-1}(\beta)$ is
a $K$-invariant complex submanifold of $T^\ast V$, and furthermore $K$
acts on $\muc^{-1}(\beta)$ with at most discrete stabilizers. Hence the
only component of the critical set of $|\mur-\alpha|^2$ that intersects
$\muc^{-1}(\beta)$ is the absolute minimum, which occurs on $\mur^{-1}(\alpha)$.
By Corollary \ref{DeformationRetract}, the gradient flow of $-|\mur-\alpha|^2$
gives the desired $K$-equivariant deformation retract from $\muc^{-1}(\beta)$ to
$\mur^{-1}(\alpha) \cap \muc^{-1}(\beta)$. Thus
\[ H_K^\ast(\muc^{-1}(\beta)) 
\iso H_K^\ast(\mur^{-1}(\alpha) \cap \muc^{-1}(\beta))
\iso H^\ast( M(\alpha, \beta) ). \]
\end{proof}

\begin{corollary} \label{SurjectivityForTori}
If $\alpha$ is a regular central value of $\mu$, then the
Kirwan map $H_K^\ast \to H^\ast(X(\alpha))$ is surjective. If $K$ is
a torus and $(\alpha, \beta)$ is a regular value of the hyperk\"ahler
 moment map,
then the hyperk\"ahler Kirwan map $H_K^\ast \to H^\ast(M(\alpha,\beta))$
is surjective.
\end{corollary}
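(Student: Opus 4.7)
The plan is to assemble the machinery already in place: Theorem \ref{GlobalEstimate} provides the \gli, Proposition \ref{FlowClosedness} converts this into flow-closedness, and the Morse-theoretic surjectivity results of Theorems \ref{KirwanSurjectivity} and \ref{SurjectivityCriterion} then finish the job. Throughout, I would use that the linearity of the action makes $V$ and $T^\ast V$ equivariantly contractible to the origin, so $H_K^\ast(V) \iso H_K^\ast(T^\ast V) \iso H_K^\ast$, as already noted earlier in this section.

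For the \Ka\ statement, I would work with the shifted moment map $\mu - \alpha$, which is again a moment map since $\alpha$ is central, and with the function $f = |\mu - \alpha|^2 \geq 0$. Theorem \ref{GlobalEstimate} applied to $\mu - \alpha$ supplies a \gli, and since $f$ is bounded below, Proposition \ref{FlowClosedness} shows that $f$ is flow-closed. Regularity of $\alpha$ for $\mu$ is exactly regularity of $0$ for $\mu - \alpha$, so Theorem \ref{KirwanSurjectivity} delivers surjectivity of $H_K^\ast \iso H_K^\ast(V) \to H^\ast(X(\alpha))$, as required.

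For the \hk\ statement, I would apply the same strategy to the complex moment map, using that the torus hypothesis is precisely the input needed for Theorem \ref{GlobalEstimate} to yield a \gli\ for $|\muc - \beta|^2$ on $T^\ast V$. The function is bounded below, hence flow-closed by Proposition \ref{FlowClosedness}. Since $(\alpha,\beta)$ is a regular value of $\muhk = (\mur, \muc)$, $\beta$ is in particular a regular central value of $\muc$, so the general case of Theorem \ref{SurjectivityCriterion} applied to $\muc - \beta$ gives surjectivity $H_K^\ast(T^\ast V) \to H_K^\ast(\muc^{-1}(\beta))$, and Corollary \ref{HomotopyCorollary} identifies the latter group with $H^\ast(M(\alpha,\beta))$. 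Composing with $H_K^\ast \iso H_K^\ast(T^\ast V)$ then yields surjectivity of the \hk\ Kirwan map.

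There is essentially no remaining obstacle: the analytic difficulty is entirely absorbed into Theorem \ref{GlobalEstimate}, and the topological machinery of Theorems \ref{KirwanSurjectivity} and \ref{SurjectivityCriterion} together with Corollary \ref{HomotopyCorollary} handles the rest. The only points worth a sentence in the writeup are the trivial observations that shifting by a central element preserves the moment map property, that the regularity hypotheses transfer to the shifted moment maps, and that the $K$-equivariant contractibility of $V$ and $T^\ast V$ collapses their equivariant cohomology to $H_K^\ast$.
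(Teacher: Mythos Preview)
Your proposal is correct and follows essentially the same route as the paper's proof: apply Theorem \ref{GlobalEstimate} and Proposition \ref{FlowClosedness} to obtain flow-closedness of $|\mu-\alpha|^2$ (respectively $|\muc-\beta|^2$), then invoke Theorem \ref{KirwanSurjectivity} in the \Ka\ case and Theorem \ref{SurjectivityCriterion} together with Corollary \ref{HomotopyCorollary} in the \hk\ case. The only additions are your explicit remarks about shifting by central elements and the equivariant contractibility of $V$ and $T^\ast V$, which the paper leaves implicit.
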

\begin{proof} Theorem \ref{GlobalEstimate} and Proposition \ref{FlowClosedness}
show that $|\mu-\alpha|^2$ is flow-closed, so we obtain surjectivity of 
$H_K^\ast \to H^\ast(X(\alpha))$ by
Theorem \ref{KirwanSurjectivity}.

In the hyperk\"ahler case, if $K$ is a torus
then $|\muc-\beta|^2$ is flow-closed, so by Theorem \ref{SurjectivityCriterion}
we obtain surjectivity of $H_K^\ast \to H^\ast_K(\muc^{-1}(\beta))$.
By Corollory \ref{HomotopyCorollary}, the map
$H_K^\ast(\muc^{-1}(\beta)) \to H^\ast(M(\alpha, \beta))$ is an
isomorphism, so we have surjectivity of $H_K^\ast \to H^\ast(M(\alpha, \beta))$.
\end{proof} 

\begin{remark}
Konno proved surjectivity of the map 
$H_K^\ast \to H^\ast(M(\alpha,\beta))$ when $K$ is a
torus using rather different means \cite{KonnoHypertoric}.
Konno also computed the kernel of the Kirwan map, giving an explicit description
of the cohomology ring $H^\ast(M(\alpha, \beta))$. We will study this case in 
detail in Section \ref{HypertoricSection}, and we will see that Morse theory
allows us to compute the kernel very easily. 
\end{remark}

\section{Proof of the Main Theorem} 
\label{Proof}
Let $K$ be a compact Lie group with Lie algebra $\fk$, and
suppose $K$ acts unitarily on a Hermitian vector space $V$. 
Without loss of generality we will regard $K$ as a subgroup of $U(V)$
and identify $\fk$ with a Lie subalgebra of $\mathfrak{u}(V)$,
which we identify with the Lie algebra of skew-adjoint matrices. 
We will use the trace norm on $\mathfrak{u}(V)$ to induce an invariant
inner product on $\fk$, and use this to identify $\fk$ with its dual.
 For any $\xi \in \fk$ there is
a fundamental vector field $v_\xi$ which is given by $v_\xi(x) = \xi x$.
We denote by $\stab(x)$ the Lie algebra of the stabilizer of a point
$x \in V$; i.e.
\begin{equation}
\stab(x) = \{ \xi \in \fk \suchthat v_\xi(x) = 0\}.
\end{equation}
If we fix an orthonormal basis $\{e_a\}$ of $\fk$, then a moment map
is given by
\begin{equation} \label{OrdinaryMomentMapFormula}
\mu(x) = \sum_a \frac{1}{2} \inner{i e_a x}{x} - \alpha,
\end{equation}
where $i = \sqrt{-1}$ is the complex structure on $V$ and $\alpha$
is any central element of $\fk$. Then for $f = |\mu|^2$,
we have
\begin{equation}
\label{GradFormula}
\nabla f(x) = 2i v_{\mu(x)}(x) = \sum_a 2 i \mu^a(x) e_a x,
\end{equation}
and since $i$ is unitary, we have that
\begin{equation}
\label{GradEquality}
|\nabla f| = 2|v_\mu|. 
\end{equation}

\begin{lemma} \label{CrossTermVanish}
Suppose $K$ is abelian, and consider its action on
$T^\ast V$. Let $f_i = |\mu_i|^2$ for $i = 1,2,3$. Then
$\inner{\nabla f_i}{\nabla f_j} = 0$ for $i \neq j$.
\end{lemma}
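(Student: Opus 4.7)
The plan is to apply formula (\ref{GradFormula}) to each K\"ahler structure $(\omega_i, I_i, \mu_i)$ on $T^\ast V$ separately, and then combine the resulting expressions using the quaternion relations together with the hypothesis that $\fk$ is abelian. The same derivation that gave (\ref{GradFormula}) yields, for each $i \in \{1,2,3\}$,
\[ \nabla f_i = 2 I_i v_{\mu_i}, \]
where $v_{\mu_i}$ denotes the fundamental vector field associated to $\mu_i(x) \in \fk$ evaluated at $x$ (using our identification $\fkd \iso \fk$). Therefore
\[ \inner{\nabla f_i}{\nabla f_j} = 4\, g(I_i v_{\mu_i}, I_j v_{\mu_j}). \]

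Next, I would use that each $I_i$ is orthogonal with $I_i^{-1} = -I_i$ to rewrite this as $-4\, g(v_{\mu_i}, I_i I_j v_{\mu_j})$. For $i \neq j$ the quaternion relations give $I_i I_j = \pm I_k$, where $k$ is the third index, so the expression becomes $\mp 4\, g(v_{\mu_i}, I_k v_{\mu_j}) = \mp 4\, \omega_k(v_{\mu_j}, v_{\mu_i})$ via the compatibility $\omega_k(\cdot,\cdot) = g(I_k \cdot, \cdot)$.

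It then remains to show that $\omega_k(v_{\mu_j}(x), v_{\mu_i}(x)) = 0$ at every $x \in T^\ast V$. At a fixed $x$, the vectors $v_{\mu_j}(x)$ and $v_{\mu_i}(x)$ are the fundamental vector fields of the (constant) elements $\mu_j(x),\mu_i(x) \in \fk$ evaluated at $x$. Applying the standard identity $\omega_k(v_\xi, v_\eta) = \inner{\mu_k}{[\xi,\eta]}$, valid for constant $\xi, \eta \in \fk$, with $\xi = \mu_j(x)$ and $\eta = \mu_i(x)$ gives
\[ \omega_k(v_{\mu_j}(x), v_{\mu_i}(x)) = \inner{\mu_k(x)}{[\mu_j(x), \mu_i(x)]}, \]
which vanishes identically since $\fk$ is abelian.

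The main obstacle is little more than careful sign bookkeeping with the quaternion relations and the identification of varying moment-map values with constant Lie algebra elements when evaluating at a single point. The structural content is simple: the quaternion relations convert the inner product of two distinct gradients into a $\omega_k$-pairing of fundamental vector fields, and the Lie bracket of commuting elements kills the result.
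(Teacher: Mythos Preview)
Your proof is correct and follows essentially the same route as the paper's: both compute $\inner{\nabla f_i}{\nabla f_j}$ by writing $\nabla f_i = 2 I_i v_{\mu_i}$, use orthogonality of the complex structures together with the quaternion relations to reduce to an $\omega_k$-pairing of fundamental vector fields, and then invoke the moment-map/equivariance identity to express this as $\inner{\mu_k}{[\mu_i,\mu_j]}$, which vanishes since $\fk$ is abelian. Your extra care in treating $\mu_i(x)$ as a constant element of $\fk$ when evaluating pointwise is a nice clarification of a step the paper leaves implicit.
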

\begin{proof} We compute:
\begin{eqnarray*}
\inner{\nabla f_2}{\nabla f_3} 
&=& 4 \inner{J v_{\mu_2}}{K v_{\mu_3}} \\
&=& 4 \inner{I v_{\mu_2}}{v_{\mu_3}} \\
&=& 4 \omega_1(v_{\mu_2}, v_{\mu_3}) \\
&=& 4 \inner{\mu_2}{d\mu_1(v_{\mu_3})} \\
&=& 4 \inner{\mu_2}{[\mu_3, \mu_1]} \\
&=& 0.
\end{eqnarray*}
Similar computations show that the other two cross terms vanish.
\end{proof}
\begin{remark} \label{CrossTermRemark}
The proof of this lemma makes it clear why the assumption
that $K$ is abelian is so useful in the hyperk\"ahler setting.
The cross term 
\[ \inner{\nabla f_j}{\nabla f_k} = \pm 4\inner{\mu_1}{[\mu_2, \mu_3]} \]
is exactly the obstruction to proving an estimate in the general nonabelian case.
The function on the right hand side is very natural, and seems to be
genuinely hyperk\"ahler, having no analogue in
symplectic geometry. Numerical experiments suggest
that it is small in magnitude compared to $|\nabla f_2| + |\nabla f_3|$,
but we do not know how to prove this. A theorem in this direction might
be enough to prove flow-closedness (and hence Kirwan surjectivity) in 
general. It certainly warrants further study.
\end{remark}
\begin{remark} \label{SubriemannianRemark}
In light of the discussion following Proposition
\ref{SjamaarTrick}, the cross term $\inner{\mu_1}{[\mu_2, \mu_3]}$
should have an interpretation in terms of the geometry of the 
non-integrable distribution $\mathcal{D}_\HH$.
Subriemannian geometry may have a key role to play in proving
Kirwan surjectivity for hyperk\"ahler quotients by nonabelian groups.
\end{remark}

\begin{proposition} 
\label{ReduceToTorus} 
Let $T \subset K$ be a maximal torus of $K$, and let $\mu_K$ and $\mu_T$ be
the corresponding moment maps. Let $f_K = |\mu_K|^2$ and $f_T = |\mu_T|^2$.
Suppose that for any $f_c \geq 0$, $f_T$ satisfies a global \loj\ inequality. 
Then $f_K$ satisfies a global \loj\ inequality with the same constants and 
exponent.
\end{proposition}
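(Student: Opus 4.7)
The strategy is to exploit $K$-equivariance to translate any point $x \in V$ to a point $y = kx$ where $\mu_K(y)$ lies in the Lie subalgebra $\ft$, and then apply the torus inequality directly at $y$.

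First I would record the relationship between $\mu_K$ and $\mu_T$. Since $T \subset K$ and the pairings come from restriction of the same invariant inner product on $\fk$, the torus moment map is $\mu_T = \pi \circ \mu_K$, where $\pi: \fk \to \ft$ is the orthogonal projection. Moreover the shift $\alpha$ used to define $\mu_K$ is central, hence lies in $\ft$, so equivariance of $\mu_K$ under $K$ still holds and is compatible with the torus action.

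Second, given any $x \in V$, choose $k \in K$ such that $\Ad_k \mu_K(x) \in \ft$; this is possible because every adjoint orbit in $\fk$ meets the chosen maximal torus. Set $y = kx$. By $K$-equivariance, $\mu_K(y) = \Ad_k \mu_K(x) \in \ft$, so $\mu_T(y) = \pi(\mu_K(y)) = \mu_K(y)$, and in particular
\[ f_T(y) = |\mu_T(y)|^2 = |\mu_K(y)|^2 = f_K(y) = f_K(x), \]
where the last equality is $K$-invariance of $f_K$. For the gradient, equation \eqref{GradFormula} gives $\nabla f_K(y) = 2 i v_{\mu_K(y)}(y)$, and since $\mu_K(y) \in \ft$ the fundamental vector field on $V$ agrees whether computed via the $K$-action or the $T$-action. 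Thus $\nabla f_T(y) = \nabla f_K(y)$. By $K$-invariance of $f_K$ and the Hermitian metric, $|\nabla f_K(x)| = |\nabla f_K(y)|$, so in total
\[ |\nabla f_K(x)| = |\nabla f_T(y)|, \qquad f_K(x) = f_T(y). \]

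Finally, fix $f_c \geq 0$ and let $\epsilon, k_0, \alpha$ be the constants from the global {\L}ojasiewicz inequality for $f_T$ with limit value $f_c$. Given $x \in V$ with $|f_K(x) - f_c| < \epsilon$, choose $y = kx$ as above. Then $|f_T(y) - f_c| = |f_K(x) - f_c| < \epsilon$, so
\[ |\nabla f_K(x)| = |\nabla f_T(y)| \geq k_0 |f_T(y) - f_c|^\alpha = k_0 |f_K(x) - f_c|^\alpha, \]
which is the desired inequality with identical constants. There is no real obstacle here beyond verifying that the projection, equivariance, and invariance identifications line up; the only subtle point is to notice that choosing $k$ pointwise is sufficient (continuity of $x \mapsto k(x)$ is not needed), and that centrality of $\alpha$ places it in $\ft$ so the relationship $\mu_T = \pi \circ \mu_K$ is preserved.
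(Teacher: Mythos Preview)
Your proof is correct and follows essentially the same route as the paper: conjugate $\mu_K(x)$ into $\ft$ by some $k \in K$, use equivariance to identify $\mu_K(kx) = \mu_T(kx)$, and then compare both $f$ and $|\nabla f|$ at $kx$ via equation~\eqref{GradEquality}. Your additional remarks (that $\mu_T = \pi \circ \mu_K$, that $\alpha$ central implies $\alpha \in \ft$, and that no continuity of $x \mapsto k$ is needed) are accurate and slightly more explicit than the paper, but the argument is the same.
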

\begin{proof}
Since $K$ is compact, for each
$x \in V$ we can find some $k \in K$ so that $\Ad_{k} \mu_K(x) \in \ft$. 
By equivariance of the moment map, we have 
$\Ad_{k} \mu_K(x) = \mu_K(k x) \in \ft$.
Hence $\mu_K(k x) = \mu_T(k x)$, so that
$|v_{\mu_K(kx)}| = |v_{\mu_T(kx)}|$.
Using equality (\ref{GradEquality}), this tells us that
$|\nabla f_K(kx)| = |\nabla f_T(kx)|$. 
Since $f_K(kx) = |\mu_K(kx)|^2 = |\mu_T(kx)|^2 = f_T(kx)$,
we deduce the \loj\ 
inequality for $f_K$ from the inequality for $f_T$.
\end{proof}

We assume for the remainder of this section that $K$ is a torus.

\begin{proposition}
\label{StrongerCondition} 
Fix $f_c \geq 0$, and suppose that for each 
$\mu_c \in \fk$ satisfying $|\mu_c|^2 = f_c$, there exist
constants $\epsilon' > 0$ and $c' > 0$ (depending on $\mu_c$) such that 
\begin{equation} \label{GlobalInequalityWeaker}
|\nabla f(x)| \geq c'|f(x) - f_c|^\frac{3}{4}
\end{equation}
whenever $|\mu(x) - \mu_c| < \epsilon'$. Then $f$ satisfies a global
\loj\ inequality, i.e., there exist constants $\epsilon > 0$ and
$c > 0$ so that the inequality (\ref{GlobalInequalityWeaker}) holds whenever 
$|f(x) - f_c| < \epsilon$. 
\end{proposition}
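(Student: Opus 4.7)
The plan is to use a compactness argument on the level set $S_c := \{\mu_c \in \fk : |\mu_c|^2 = f_c\}$, exploiting the hypothesis that a local inequality already holds near each of its points.

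The first observation is that $S_c$ is compact: when $f_c > 0$ it is the Euclidean sphere of radius $\sqrt{f_c}$ in $\fk$, while for $f_c = 0$ it degenerates to the single point $\{0\}$. The case $f_c = 0$ is immediate: the hypothesis supplies constants $\epsilon'_0, c'_0 > 0$ with the desired inequality on $\{x : |\mu(x)| < \epsilon'_0\}$, and since $|\mu(x)| < \epsilon'_0$ follows from $|f(x)| < (\epsilon'_0)^2$, one simply takes $\epsilon := (\epsilon'_0)^2$ and $c := c'_0$.

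Assume from now on that $f_c > 0$. For each $\mu_c \in S_c$, let $\epsilon'(\mu_c), c'(\mu_c)$ be the constants supplied by the hypothesis. The open balls $\{B(\mu_c, \epsilon'(\mu_c)/2)\}_{\mu_c \in S_c}$ cover $S_c$, so by compactness one extracts a finite subcover indexed by points $\mu_c^{(1)}, \ldots, \mu_c^{(n)}$, with associated constants $\epsilon'_i := \epsilon'(\mu_c^{(i)})$ and $c'_i := c'(\mu_c^{(i)})$. Set $c := \min_i c'_i$ and $\delta := \min_i \epsilon'_i/2$.

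Next, I would choose $\epsilon > 0$ small enough to guarantee that $|f(x) - f_c| < \epsilon$ forces $\mu(x)$ to lie within distance $\delta$ of $S_c$. From the factorization
\[
|f(x) - f_c| = \bigl||\mu(x)| - \sqrt{f_c}\bigr|\cdot\bigl(|\mu(x)| + \sqrt{f_c}\bigr),
\]
it is clear that by shrinking $\epsilon$ one can make $\bigl||\mu(x)| - \sqrt{f_c}\bigr|$ as small as desired; in particular one may arrange $\mu(x) \neq 0$, so the radial projection $\mu_c := \sqrt{f_c}\, \mu(x)/|\mu(x)| \in S_c$ is defined and satisfies $|\mu(x) - \mu_c| < \delta$. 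By the finite cover there is some $i$ with $|\mu_c - \mu_c^{(i)}| < \epsilon'_i/2$, whence the triangle inequality gives $|\mu(x) - \mu_c^{(i)}| < \epsilon'_i$, and the hypothesis yields $|\nabla f(x)| \geq c'_i|f(x) - f_c|^{3/4} \geq c|f(x) - f_c|^{3/4}$, as required. There is no serious obstacle here: this is a routine finite-cover argument, the only point requiring any care being the degeneration at $f_c = 0$, which actually simplifies the argument rather than complicating it.
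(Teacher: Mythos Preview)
Your proof is correct and follows essentially the same compactness argument as the paper: cover the sphere $\{|\mu_c|^2 = f_c\}$ by the given balls, extract a finite subcover, take the minimum of the constants, and use continuity of $|\mu|^2$ to ensure that $|f(x)-f_c|<\epsilon$ forces $\mu(x)$ into one of the balls. The only cosmetic differences are that you treat $f_c=0$ separately and use half-radius balls with an explicit triangle-inequality step, whereas the paper uses full-radius balls and appeals directly to the fact that a finite union of open balls covering the sphere contains a uniform tubular neighbourhood.
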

\begin{proof} Suppose that for each $\mu_c$ as above we can find
constants $\epsilon(\mu_c)$ and $c(\mu_c)$ so that inequality
(\ref{GlobalInequalityWeaker}) holds. Let
$U(\mu_c)$ be the $\epsilon(\mu_c)$-ball in $\fk$ centered at $\mu_c$.
These open sets cover the sphere $S$ of radius $\sqrt{f_c}$ in $\fk$, and by
compactness we can choose a finite subcover. Denote this finite subcover by
$\{U_i\}_{i=1}^n$, with centers $\mu_i$ and constants $c_i$, and
let $c = \min_i c_i$. The finite union $\cup_i U_i$ 
contains an $\epsilon$-neighbourhood of $S$ for some sufficiently small
$\epsilon$.  Since $f(x) = |\mu(x)|^2$, if we choose
$\epsilon' > 0$ sufficiently small then $|f(x) - f_c| < \epsilon'$ implies
that $\left| |\mu(x)| - \sqrt{f_c} \right| < \epsilon$, so that 
$\mu(x) \in \cup_i U_i$. In particular, there is some $j$ such that
$\mu(x) \in U_j$, and by inequality (\ref{GlobalInequalityWeaker}) we have
\[ |\nabla f(x)| \geq c_j |f(x) - f_c|^\frac{3}{4} 
\geq c|f(x) - f_c|^\frac{3}{4}, \]
as desired.
\end{proof} 

Before giving the proof of Theorem \ref{GlobalEstimate}, we isolate some of
the main steps in the following lemmas.
Let us introduce the following notation. For $x \in V \setminus \{0\}$,
let $\hat{x}$ denote its projection to the unit sphere, i.e.\
$\hat{x} = x/|x|$ or equivalently $x = |x|\hat{x}$.
Since the action is linear, we have that $v_\xi(x) = |x| v_\xi(\hat{x})$
and $\stab(x) = \stab(\hat{x})$.
 
\begin{lemma} 
\label{KeyEstimate1} 
Fix $\hat{y}$ 
in the unit sphere in $V$. Let $P$ be the orthogonal projection from $\fk$ to 
$\stab(\hat{y})^\perp$, and $Q = 1-P$. Then there is a neighbourhood 
$U$ of $\hat{y}$ such that for any $\xi \in \fk$, inequalities
\begin{eqnarray}
\label{KeyInequality1}
 |v_\xi(x)| &\geq& c |x| | P \xi | \\
\label{KeyInequality2}
 |v_\xi(x)| &\geq& c' |v_{P \xi}(x)| \\
\label{KeyInequality3}
 |v_\xi(x)| &\geq& c'' \left(|v_{P\xi}(x)| + |v_{Q\xi}(x)| \right)
\end{eqnarray}
hold for all $x$ such that $\hat{x} \in U$.
The constants $c,c',c''$ are 
positive and depend only on $\hat{y}$ and $U$ but not on $x$ or $\xi$.
\end{lemma}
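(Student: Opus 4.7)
The plan is to prove inequality (\ref{KeyInequality1}) directly, and derive the other two as simple consequences. The main obstacle for (\ref{KeyInequality1}) is that the right-hand side involves only $P\xi$, while $|Q\xi|$ is entirely unconstrained; one must therefore rule out any catastrophic cancellation between $v_{P\xi}(\hat{x})$ and $v_{Q\xi}(\hat{x})$ at points $\hat{x}$ near but distinct from $\hat{y}$. The torus hypothesis is what makes this possible: it supplies a weight space decomposition in which $Q\xi$ acts trivially on precisely the weight spaces where $\hat{y}$ has nonzero component, so that no cancellation occurs in the ``visible'' directions.

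Concretely, since $K$ is a torus, decompose $V = \bigoplus_j V_j$ into weight spaces with weights $\alpha_j \in \fkd$, so that $\xi \in \fk$ acts on $V_j \ni x_j$ by $i\alpha_j(\xi) x_j$. For $y \in V$, writing $y = \sum_j y_j$ and $J(y) = \{j : y_j \neq 0\}$, one has $\stab(y) = \bigcap_{j \in J(y)} \ker \alpha_j$, so under the duality provided by the inner product on $\fk$,
\[
\stab(\hat{y})^\perp = \Span\{\alpha_j^\sharp : j \in J(\hat{y})\}.
\]
The key consequence is that $\alpha_j(Q\xi) = 0$ for every $j \in J(\hat{y})$ and every $\xi \in \fk$, hence $\alpha_j(\xi) = \alpha_j(P\xi)$ on those indices.

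For (\ref{KeyInequality1}), I would estimate
\[
|v_\xi(\hat{x})|^2 = \sum_j \alpha_j(\xi)^2 |\hat{x}_j|^2 \;\geq\; \sum_{j \in J(\hat{y})} \alpha_j(P\xi)^2 |\hat{x}_j|^2,
\]
discarding the nonnegative contributions from $j \notin J(\hat{y})$. The quadratic form $\eta \mapsto \sum_{j \in J(\hat{y})} \alpha_j(\eta)^2 |\hat{y}_j|^2$ on $\stab(\hat{y})^\perp$ is positive definite, since its kernel would lie in $\stab(\hat{y}) \cap \stab(\hat{y})^\perp = 0$; hence by compactness of the unit sphere it is bounded below by $c_0 |\eta|^2$ for some $c_0 > 0$. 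Shrinking $U$ if necessary so that $|\hat{x}_j|^2 \geq \tfrac{1}{2} |\hat{y}_j|^2$ for all $\hat{x} \in U$ and $j \in J(\hat{y})$ gives $|v_\xi(\hat{x})|^2 \geq \tfrac{c_0}{2} |P\xi|^2$, and (\ref{KeyInequality1}) then follows from the identity $|v_\xi(x)| = |x||v_\xi(\hat{x})|$.

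The remaining inequalities require no further analysis of the geometry. Linearity of the action gives $|v_{P\xi}(x)| \leq C|P\xi||x|$, where $C$ depends only on the representation, and combining with (\ref{KeyInequality1}) yields $|v_\xi(x)| \geq (c/C) |v_{P\xi}(x)|$, which is (\ref{KeyInequality2}). For (\ref{KeyInequality3}), the triangle inequality $|v_{Q\xi}(x)| \leq |v_\xi(x)| + |v_{P\xi}(x)|$ combined with (\ref{KeyInequality2}) produces a lower bound on $|v_\xi(x)|$ in terms of $|v_{Q\xi}(x)|$; averaging this with (\ref{KeyInequality2}) gives the claimed bound in terms of $|v_{P\xi}(x)| + |v_{Q\xi}(x)|$.
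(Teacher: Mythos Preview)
Your proof is correct and follows the same strategy as the paper's: both identify a subspace on which $Q\xi$ acts trivially (your $\bigoplus_{j \in J(\hat{y})} V_j$ is precisely the paper's smallest $K$-invariant subspace $W$ containing $\hat{y}$), use positive definiteness of the resulting quadratic form at $\hat{y}$ to obtain (\ref{KeyInequality1}), and then derive (\ref{KeyInequality2}) and (\ref{KeyInequality3}) via linearity and the triangle inequality. The only difference is presentational---you compute in explicit weight coordinates, whereas the paper uses the orthogonal projection $P_W$ onto $W$ and a Gram matrix argument.
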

\begin{remark} A version of this lemma appears as part of the proof of
\cite[Theorem A.1]{NeemanQuotientVarieties}, though it is not stated
exactly as above. We repeat the argument below so that our proof of Theorem
\ref{GlobalEstimate} is self-contained.
\end{remark}
\begin{proof}
Fix $\hat{y}$ and let $P$ and $Q$ be as above. 
Let $W$ be the smallest $K$ invariant subspace of $V$ containing $\hat{y}$,
and let $P_W: V \to W$ be the orthogonal projection. Note that $W$ is
generated by vectors of the form $\xi_1 \cdots \xi_l \hat{y}$, with
$\xi_i \in \fkc$. Since $P_W$ is a projection, $|v_\xi| \geq |P_W v_\xi|$, 
so to establish inequality 
(\ref{KeyInequality1}) it suffices to show that 
$|P_W v_\xi| \geq c|P \xi|$.
Note that $P_W$
is equivariant, i.e.\ $\xi P_W = P_W \xi$ for all $\xi \in \fk$. Note also
that since $K$ is abelian, if $\xi \in \stab(\hat{y})$ then $\xi \in \ann(W)$,
since $\xi \xi_1 \cdots \xi_l \hat{y} = \xi_1 \cdots \xi_l \xi \hat{y} = 0$.
For any orthonormal basis $\{e_a\}_{i=1}^d$ of $\fk$ chosen so that
$\{e_a\}_{i=1}^n$ is an orthonormal basis of $\stab(\hat{y})^\perp$ and
$\{e_a\}_{i=n+1}^d$ is an orthonormal basis of $\stab(\hat{y})$, we have
$P\xi = \sum_{a=1}^d \xi^a Pe_a = \sum_{a=1}^n \xi^a e_a$.
Similarly, we find
\[ P_W v_\xi(x) = \sum_{a=1}^d P_W \xi^a e_a x 
= \sum_{a=1}^d \xi^a e_a P_W x
= \sum_{a=1}^n \xi^a e_a P_W x = P_W v_{P\xi}(x). \]
Taking norms, we see that
\[ |P_W v_\xi(x)|^2 
= \sum_{a=1}^n \sum_{b=1}^n \xi^a \xi^b \inner{P_W e_a x}{P_W e_b x}
 = (P\xi)^T G(x) (P\xi), \]
where $G(x)$ is the matrix with entries 
$G_{ab}(x)  = \inner{P_W e_a x}{P_W e_b x}$ for $a,b = 1, \ldots, n$.
By construction,
this matrix is is positive definite at $\hat{y}$, so for a sufficiently 
small neighbourhood $U$ of $\hat{y}$, we obtain
$|P_W v_\xi(\hat{x})|^2 \geq c |P\xi|^2$,
with the constant $c$ depending only on $\hat{y}$ and the choice of
neighbourhood $U$. For any $x$ with $\hat{x} \in U$, we obtain
$|v_\xi(x)| = |x| |v_\xi(\hat{x})| \geq c |x| |P\xi|$,
which is inequality (\ref{KeyInequality1}).

We can deduce inequality (\ref{KeyInequality2}) from inequality 
(\ref{KeyInequality1}) as follows. We have
\[ |v_{P\xi}(\hat{x})| = |\sum_{a=1}^n \xi^a e_a \hat{x}|
 \leq |P\xi|\sum_{a=1}^n |e_a \hat{x}|. \]
Shrinking $U$ if necessary, we can assume that the functions
$|e_a \hat{x}|$ are bounded on $U$, and so we obtain
$|v_{P\xi}(\hat{x})| \leq c'|P\xi| \leq cc' |v_\xi(\hat{x})|$.
Both sides are homogeneous of the same degree in $\hat{x}$, so the inequality 
holds for any $x$ with $\hat{x} \in U$. This establishes 
inequality (\ref{KeyInequality2}).

To establish inequality (\ref{KeyInequality3}), first note the following
consequence of the triangle inequality. If $v,w$ are vectors in some
normed vector space, and $|v + w| \geq a |v|$ with $a > 0$, then we have
\begin{equation} \label{MinorUsefulInequality}
 |v| + |w| = |v| + |v+w - v| \leq 2|v| + |v+w| 
\leq \left(1 + \frac{2}{a} \right)|v+w|.
\end{equation}
Since $\xi = P\xi + Q\xi$, we have
$v_\xi(x) = v_{P\xi}(x) + v_{Q\xi}(x)$, so applying inequality
(\ref{KeyInequality2}) together with the inequality
(\ref{MinorUsefulInequality}) above, we obtain
\[ |v_\xi(x)| \geq c''' \left( |v_{P\xi}(x)| + |v_{Q\xi}(x)| \right), \]
with the constant $c'''$ depending only on $\hat{y}$ and the neighbourhood
$U$.
\end{proof}

\begin{lemma}
\label{KeyEstimate2}
Let $f = |\mu|^2$ and fix $\mu_c \in \fk$. Then there exist
constants $c > 0$ and $\epsilon > 0$ (depending on $\mu_c$) such that whenever
$|\mu(x) - \mu_c| < \epsilon$, we have 
\begin{equation} \label{KeyInequality4}
|x|^2 |f(x)| \geq c \left|f(x) - f_c \right|^\frac{3}{2}.
\end{equation}
\end{lemma}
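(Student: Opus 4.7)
The plan is to reduce the stated inequality to a purely algebraic estimate in the finite-dimensional Lie algebra $\fk$, thereby avoiding any appeal to properness of $\mu$. From equation~(\ref{OrdinaryMomentMapFormula}), $\mu(x) = \phi(x) - \alpha$, where $\phi(x) := \tfrac{1}{2}\sum_a \inner{i e_a x}{x}\, e_a$ is a $\fk$-valued quadratic polynomial in $x$. Since $\phi$ is quadratic, the quantity $C := \sup\{|\phi(x)| : |x| = 1\}$ is finite, and we obtain the elementary pointwise bound
\[
|x|^2 \ \geq\ C^{-1}\,|\mu(x) + \alpha|.
\]
Thus, to prove $|x|^2 f(x) \geq c\,|f(x) - f_c|^{3/2}$ it suffices to establish the purely algebraic inequality
\[
|\mu + \alpha|\cdot|\mu|^2 \ \geq\ c'\,\bigl|\,|\mu|^2 - |\mu_c|^2\,\bigr|^{3/2}
\]
for all $\mu \in \fk$ with $|\mu - \mu_c| < \epsilon$, where $f_c = |\mu_c|^2$.

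The remaining task is a local estimate in $\fk$. Setting $\nu := \mu - \mu_c$, one expands
\[
|\mu|^2 - |\mu_c|^2 = 2\inner{\nu}{\mu_c} + |\nu|^2, \qquad \mu + \alpha = \nu + (\mu_c + \alpha),
\]
so that the right-hand side of the reduced inequality is bounded above by $c'(2|\mu_c|+|\nu|)^{3/2}|\nu|^{3/2}$. The argument splits into four sub-cases according to whether $\mu_c + \alpha$ and $\mu_c$ vanish. When $\mu_c + \alpha \neq 0$, the factor $|\mu + \alpha|$ stays bounded below by $|\mu_c + \alpha|/2$ for $|\nu|$ small, reducing matters to comparing $|\mu|^2$ with a power of $|\nu|$; this is automatic when $\mu_c \neq 0$ (the LHS is bounded below while the RHS vanishes with $\epsilon$) and becomes $|\nu|^2 \geq c''|\nu|^3$ when $\mu_c = 0$, which holds for small $|\nu|$. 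When $\mu_c + \alpha = 0$, we have $|\mu+\alpha| = |\nu|$, so the LHS becomes $|\nu|\cdot|\mu|^2$; the sub-case $\mu_c = 0$ (which forces $\alpha = 0$) is trivial as both sides are $|\nu|^3$, and the sub-case $\mu_c \neq 0$ reduces, after bounding $|\mu|^2 \geq |\mu_c|^2/4$, to $|\nu| \geq c'''|\nu|^{3/2}$, valid for small $|\nu|$.

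The main obstacle is bookkeeping rather than any deep analytic difficulty: the four sub-cases have qualitatively different leading-order behavior on both sides, and the constants and the size of $\epsilon$ must be chosen compatibly across all regimes. The critical sub-case is the degenerate one $\mu_c + \alpha = 0$ with $\mu_c \neq 0$, where both factors on the left cooperate to yield precisely the exponent $3/2$; this reflects the quadratic nature of $\mu$ and is ultimately the source of the sharp \loj\ exponent $3/4$ appearing in Theorem~\ref{GlobalEstimate}. Combined with Lemma~\ref{KeyEstimate1} and Proposition~\ref{StrongerCondition}, this lemma will then yield the main theorem.
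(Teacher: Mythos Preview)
Your argument is correct and parallels the paper's own proof closely. The paper introduces intermediate coordinates $v_{ij}=x_i\bar{x}_j$, writes $\mu=Av-\alpha$ as an affine map in $v$, bounds $|x|^2\ge N^{-1}|v|$, and then proves the reduced inequality as a stand-alone lemma (Lemma~\ref{RhoPhiLemma}) about arbitrary affine maps between inner product spaces, with the same four-case split; your dichotomy on whether $\mu_c+\alpha=0$ corresponds exactly to the paper's dichotomy on whether one may choose $v_c=0$. Your route simply short-circuits the intermediate $v$-variables by using $|x|^2 \ge C^{-1}|\mu+\alpha|$ directly, which is a bit leaner here though less modular than isolating the affine-map estimate.

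One minor correction to your closing remark: the sub-case that actually forces the exponent $3/2$ is the fully degenerate one $\mu_c=0$, $\alpha=0$, where both sides are exactly of order $|\nu|^3$. In your case $\mu_c+\alpha=0$, $\mu_c\neq 0$, the left-hand side is of order $|\nu|$ while the right is of order $|\nu|^{3/2}$, so the inequality holds there with room to spare and any exponent at least $1$ would suffice.
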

\begin{proof}
Fix some particular $\mu_c$. Recall that $\mu$ is quadratic in the coordinate 
$x$ with no linear terms. Thus $\mu$ is affine in the coordinates
$v_{ij} = x_i \bar{x}_j$, and we may write
$\mu(x) = \phi(v)$, where $\phi(v) = Av - \alpha$, for some linear 
transformation $A: V \otimes V \to \fk$. We have
$|v_{ij}| = |x_i||x_j| \leq \frac{1}{2}\left(|x_i|^2 + |x_j|^2 \right)$,
so that
$|v| \leq \frac{1}{2} \sum_{i,j} |x_i|^2 + |x_j|^2 = N |x|^2$,
where $N = \dim V$. Thus
$|x|^2 |f(x)| = |x|^2 |\phi(v)|^2 \geq N^{-1} |v| |\phi(v)|^2$,
hence it suffices to show that
\[ |v| |\phi(v)|^2 \geq c \left| |\phi(v)|^2 - f_c \right|^\frac{3}{2}, \]
whenever $|\phi(v) - \mu_c| < \epsilon$. This follows immediately
from Lemma \ref{RhoPhiLemma}, which we state and prove below.
\end{proof}
  
\begin{lemma}
\label{RhoPhiLemma} Let $V_1$ and $V_2$ be inner product spaces, and consider
an affine map $\phi: V_1 \to V_2$ given by $\phi(v) = Av - \alpha$
for some linear map $A: V_1 \to V_2$ and constant $\alpha \in V_2$. 
Then for any $\phi_c$ in the image of $\phi$, there exist constants $c > 0$ and 
$\epsilon > 0$ such that
\[ |v| |\phi(v)|^2 \geq c \npvz^\frac{3}{2} \]
whenever $|\psi(v) - \phi_c| < \epsilon$.
\end{lemma}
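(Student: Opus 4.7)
The plan is as follows. Writing $w := \phi(v) - \phi_c$, the elementary identity
\[
|\phi(v)|^2 - |\phi_c|^2 \;=\; 2\langle \phi_c, w\rangle + |w|^2
\]
yields $\npvz \leq (2|\phi_c|+|w|)|w|$, which bounds the right-hand side of the asserted inequality by a constant multiple of $|w|^{3/2}$ (taking $\epsilon \leq 1$, say). The remaining task is to produce a matching lower bound for $|v|\cdot|\phi(v)|^2$.

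For the $|v|$-factor, I would fix the minimum-norm preimage $v_c \in (\ker A)^\perp$ of $\phi_c$ (which exists since $\phi_c$ lies in the image of $\phi$) and decompose $v - v_c = u_\perp + u_\parallel$ with $u_\perp \in (\ker A)^\perp$ and $u_\parallel \in \ker A$. Since $w = Au_\perp$ and $A|_{(\ker A)^\perp}$ is injective, one has the two-sided bound $|w|/|A| \leq |u_\perp| \leq |w|/\sigma$, where $\sigma > 0$ is the smallest positive singular value of $A$. The argument then splits on whether $v_c$ vanishes (equivalently, whether $\phi_c = -\alpha = \phi(0)$). If $v_c \neq 0$, choosing $\epsilon < \sigma |v_c|/2$ forces $|u_\perp| < |v_c|/2$, so $|v| \geq |v_c + u_\perp| \geq |v_c|/2$---a uniform positive lower bound. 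If $v_c = 0$, one only obtains $|v| \geq |u_\perp| \geq |w|/|A|$, linear in $|w|$.

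In each of the four resulting cases (indexed by vanishing of $v_c$ and of $\phi_c$), these estimates combine with either the lower bound $|\phi(v)|^2 \geq |\phi_c|^2/4$ (valid for $\phi_c \neq 0$ and $\epsilon < |\phi_c|/2$) or the identity $|\phi(v)| = |w|$ when $\phi_c = 0$ to yield the result. The main obstacle will be the case $v_c = 0$ with $\phi_c \neq 0$: here $|v|\cdot|\phi(v)|^2$ is bounded below only by a constant multiple of $|w|$, while the right-hand side is $O(|w|^{3/2})$. An inequality of the form $a|w| \geq b|w|^{3/2}$ is not automatic, but it does hold once $|w|^{1/2} \leq a/b$, so one absorbs the constants by shrinking $\epsilon$ further. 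This is precisely the regime that dictates the $3/4$-exponent in Theorem \ref{GlobalEstimate}. The three remaining cases are easier: when $v_c \neq 0$ the LHS has a uniform positive lower bound while the RHS is $O(\epsilon^{3/2})$, and the homogeneous case $v_c = \phi_c = 0$ is Neeman's original setting, with both sides scaling as $|w|^3$ and the inequality following at once from $|v| \geq |w|/|A|$.
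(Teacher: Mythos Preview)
Your approach is essentially the paper's: project onto $(\ker A)^\perp$ (the paper phrases this as reducing to $A$ injective), pick $v_c$ with $\phi(v_c)=\phi_c$, and split into the four cases indexed by vanishing of $v_c$ and $\phi_c$. One small correction: in the case $v_c\neq 0$, $\phi_c=0$ the left-hand side $|v|\,|\phi(v)|^2=|v|\,|w|^2$ is \emph{not} uniformly bounded below (it vanishes as $w\to 0$), but the inequality still follows immediately since the right-hand side is $|w|^3$ and $|v|\geq |v_c|/2$.
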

\begin{proof} To avoid unnecessary clutter, we sometimes use
the shorthand $v^2 := |v|^2$ below.
First note that if $P$ is a projection such that $AP = A$,
then we have $\phi(v) = \phi(Pv)$, and so
\[ |v| |\phi(v)|^2 = |v| |\phi(Pv)|^2 \geq |Pv| |\phi(Pv)|^2. \]
Taking $P$ to be the orthogonal projection from $V_1$ to $(\ker A)^\perp$, 
without loss of generality we can assume that $A$ is injective. Similarly, 
without loss of generality we may assume that $A$ is surjective.
Suppose that $\phi_c \in V_2$ is fixed. Pick $v_c \in V_1$ so that
$\phi(v_c) = \phi_c$. There are four possible cases.

Case 1: $v_c = 0, \phi_c = 0$. In this case, $\alpha = 0$, and
$\phi(v) = Av$, so that $|\phi(v)| \leq |A| |v|$. Thus
$|v| |\phi(v)|^2 \geq |A|^{-1} |\phi(v)|^3$,
as desired. We may take $\epsilon$ to be any positive number, and
$c = |A|^{-1}$.

Case 2: $v_c = 0, \phi_c \neq 0$. Take $\epsilon \leq |\phi_c|/2$.
Then
\begin{eqnarray*}
|\phi(v)^2 - \phi_c^2|
&=& |\inner{\phi(v)-\phi_c}{\phi(v)+\phi_c}| \\
&\leq& |\phi(v) - \phi_c| |\phi(v) + \phi_c | \\
&\leq& |\phi(v) - \phi_c| \left( 2|\phi_c| + \epsilon \right) \\
&\leq& \frac{5}{2} |\phi(v) - \phi_c| |\phi_c|,
\end{eqnarray*}
so we have
$|\phi(v)^2 - \phi_c^2| \leq c_1 |\phi(v) - \phi_c| |\phi_c|$,
where $c_1$ is a numerical constant independent of $\phi_c$.
Since $|\phi(v) - \phi_c| < |\phi_c|/2$, we also have
$|\phi(v)^2 - \phi_c^2| \leq c_2 |\phi_c|^2$.
Combining these two inequalities, we have
$|\phi(v)^2 - \phi_c^2|^\frac{3}{2} \leq c_3 |\phi(v) - \phi_c| |\phi_c|^2$.
Since $\phi(v) - \phi_c = A(v - v_c) = Av$, we have
$|\phi(v) - \phi_c| \leq |A| |v|$. Putting this back into the previous
inequality, we obtain
$|\phi(v)^2 - \phi_c^2|^\frac{3}{2} \leq |A| |v| |\phi_c|^2$.
On the other hand, with our choice of $\epsilon$ we have
$|\phi| \geq \frac{1}{2} |\phi_c|$, so that
$|\phi(v)^2 - \phi_c^2|^\frac{3}{2} \leq 4 |A| |v| |\phi|^2$, 
as desired.

Case 3: $v_c \neq 0, \phi_c = 0$. Take 
$\epsilon = |A^{-1}|^{-1} |A|^{-1} |\alpha| / 2$.
Since in this case $A v_c = \alpha$, we have $\epsilon \leq |A^{-1}|^{-1}|v_c|/2$,
 and
\begin{eqnarray*}
|v_c| &=& |v - (v-v_c)| \\
&\leq& |v| + |v-v_c| \\
&=& |v| + |A^{-1} \phi(v)| \\
&\leq& |v| + |A^{-1}| \epsilon \\
&\leq& |v| + \frac{|v_c|}{2}.
\end{eqnarray*}
Thus $|v| \geq |v_c|/2 \geq |A|^{-1} |\alpha|/2$. Then
\[ |\phi(v)|^3 \leq \epsilon |\phi(v)|^2 
= \frac{1}{2} |A^{-1}|^{-1}|A|^{-1} |\alpha| |\phi(v)|^2
\leq |A^{-1}|^{-1} |v| |\phi(v)|^2, \]
which is the desired inequality.

Case 4: $v_c \neq 0, \phi_c \neq 0$. Let $\epsilon'$ be chosen as in case (3),
and let $\epsilon = \min\{\epsilon', |\phi_c|/2\}$. As in the previous cases,
this choice of $\epsilon$ guarantees that $|v| \geq |v_c|/2$, and that
$|\phi(v)| \geq |\phi_c|/2$. As before,
\begin{eqnarray*}
|\phi(v)^2 - \phi_c^2| &\leq& |\phi(v) - \phi_c| |\phi_v + \phi_c| \\
&\leq& \epsilon ( 2 |\phi_c| + \epsilon) \\
&\leq& \frac{5}{4} |\phi_c|^2. 
\end{eqnarray*}
Similarly, since $|v-v_c| \leq |v_c|/2$ and 
$|\phi(v) - \phi_c| \leq |A||v - v_c|$, we also have
\[|\phi(v)^2 - \phi_c^2| \leq (3/4) |A| |v_c| |\phi_c|.\]
Putting these together, we have that
\[ |\phi(v)^2 - \phi_c^2|^\frac{3}{2} 
\leq c|v_c| |\phi_c|^2 \leq c' |v| |\phi(v)|^2, \]
where $c$ and $c'$ are numerical constants independent of $\phi_c$.
\end{proof}

Lemmas \ref{KeyEstimate1} and
\ref{KeyEstimate2} allow us to prove the following local estimates, which
are essential in the proof of Theorem \ref{GlobalEstimate}.

\begin{proposition} 
\label{FreeCase} 
Let $\mu_c \in \fk$ be fixed and $y$ is some point in $V$ with discrete 
stabilizer. Then there exists an open neighbourhood $U$ of $\hat{y}$ and 
constants $c > 0$ and $\epsilon > 0$ such that
\[ |\nabla f(x)|^2 \geq k|f(x) - f_c|^\frac{3}{2} \]
for all $x \in V \setminus \{0\}$ such that $\hat{x} \in U$ and
$|\mu(x) - \mu_c| < \epsilon$.
\end{proposition}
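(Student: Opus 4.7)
The plan is to combine the two key lemmas already established, with the observation that the discrete stabilizer hypothesis forces the projection $P$ of Lemma \ref{KeyEstimate1} to be the identity. Indeed, by definition $\stab(\hat y)$ is the Lie algebra of the stabilizer of $\hat y$, and since $y$ (and hence $\hat y$) has discrete stabilizer, $\stab(\hat y) = 0$. Thus $\stab(\hat y)^\perp = \fk$, so $P = \mathrm{id}_\fk$ and $Q = 0$.

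With this observation, inequality (\ref{KeyInequality1}) of Lemma \ref{KeyEstimate1} becomes simply
\[ |v_\xi(x)| \geq c|x|\,|\xi| \]
for every $\xi \in \fk$ and every $x$ with $\hat x$ in some neighbourhood $U$ of $\hat y$. Specializing $\xi = \mu(x)$ and using equality (\ref{GradEquality}), we obtain
\[ |\nabla f(x)| \;=\; 2|v_{\mu(x)}(x)| \;\geq\; 2c\,|x|\,|\mu(x)|, \]
which upon squaring yields $|\nabla f(x)|^2 \geq 4c^2\,|x|^2\,f(x)$.

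To finish, I invoke Lemma \ref{KeyEstimate2} for the fixed $\mu_c$: it provides constants $c' > 0$ and $\epsilon > 0$ so that $|x|^2\,f(x) \geq c'\,|f(x) - f_c|^{3/2}$ whenever $|\mu(x) - \mu_c| < \epsilon$. Combining the two estimates gives
\[ |\nabla f(x)|^2 \;\geq\; 4c^2 c'\,|f(x) - f_c|^{3/2}, \]
which is the desired inequality with $k = 4c^2 c'$, for $x \in V \setminus \{0\}$ satisfying $\hat x \in U$ and $|\mu(x) - \mu_c| < \epsilon$.

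The proof is essentially a direct assembly of the two preparatory lemmas; the only real step of substance is recognizing that the discrete stabilizer condition eliminates the $Q\xi$ contribution and allows the quadratic lower bound $|v_\xi(x)| \geq c|x||\xi|$ to be applied to $\xi = \mu(x)$. There is no serious obstacle here, since the heavy lifting (the norm comparison on the linear span of the orbit, and the affine estimate of Lemma \ref{RhoPhiLemma}) has already been done. The nontrivial cases — where $\stab(\hat y) \neq 0$ — will require handling the $Q\xi$ component separately and will be the main obstacle in the subsequent stages of the proof of Theorem \ref{GlobalEstimate}, but are not needed here.
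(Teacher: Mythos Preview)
Your proof is correct and follows essentially the same route as the paper's own argument: both use $\stab(\hat y)=0$ to reduce inequality~(\ref{KeyInequality1}) to $|v_\xi(x)| \geq c|x||\xi|$, specialize $\xi = \mu(x)$, and then invoke Lemma~\ref{KeyEstimate2} together with $|\nabla f| = 2|v_\mu|$. You are somewhat more explicit about the role of the projection $P$ and the squaring step, but the substance is identical.
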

\begin{proof}
Suppose $y \in V$ and $\stab(y) = 0$. Then by
Lemma \ref{KeyEstimate1}, there is a neighbourhood $U$ of $\hat{y}$ so that
\[ |v_\xi(x)|^2 \geq c |x|^2 |\xi|^2 \]
for all $x$ such that $\hat{x} \in U$. Take $\xi = \mu(x)$ and apply
Lemma \ref{KeyEstimate2} to find $c'$ and $\epsilon$ so that
\[ |v_{\mu(x)}(x)|^2 \geq c' \left| |\mu(x)|^2 - f_c \right|^\frac{3}{2}, \]
whenever $|\mu(x) - \mu_c| < \epsilon$.
Since $|\nabla f(x)| = 2|v_{\mu(x)}(x)|$, this gives the desired inequality.
\end{proof}

\begin{proposition} 
\label{InductionStep}
Let $y \in V$ and suppose $\stab(y)$ is a proper nontrivial subspace of $\fk$. 
Then there are proper
subtori $K_1, K_2$ of $K$ such that $K \iso K_1 \times K_2$, a neighbourhood 
$U$ of $\hat{y}$, and a constant $c > 0$ such that
\[ |\nabla f(x)| \geq c\left( |\nabla f_{K_1}(x)| + |\nabla f_{K_2}(x)| \right) \]
for all $x \in V \setminus \{0\}$ with $\hat{x} \in U$, 
where $f_{K_1} = |\mu_{K_1}|^2$ and $f_{K_2} = |\mu_{K_2}|^2$.
\end{proposition}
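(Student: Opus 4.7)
The plan is to exploit the orthogonal decomposition $\fk = \stab(\hat y)^\perp \oplus \stab(\hat y)$ and apply inequality (\ref{KeyInequality3}) of Lemma \ref{KeyEstimate1} with $\xi = \mu(x)$, identifying the resulting projections with moment maps for complementary subtori.

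First I would produce the product splitting. Since $K$ is a torus and $\Stab(\hat y)^0$ is a closed subtorus with Lie algebra $\stab(\hat y)$, the structure theory of compact abelian Lie groups (primitive sublattices of $\ZZ^n$ admit primitive complements) produces a complementary proper subtorus $K_1$ so that $K \iso K_1 \times K_2$, where $K_2 := \Stab(\hat y)^0$. Both factors are proper precisely because $\stab(\hat y)$ was assumed to be both proper and nontrivial. By perturbing $K_1$ within the class of such complements (rational subspaces are dense in the Grassmannian of complements to $\stab(\hat y)$), we may arrange that $\Lie(K_1)$ is arbitrarily close to $\stab(\hat y)^\perp$; for the argument below it suffices to take $\Lie(K_1) = \stab(\hat y)^\perp$, absorbing a small error into the final constant if the desired equality cannot be realized exactly.

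With the splitting in place, let $P : \fk \to \stab(\hat y)^\perp$ and $Q = 1 - P$ be the orthogonal projections appearing in Lemma \ref{KeyEstimate1}. The moment map for a subgroup is obtained from the ambient moment map by restriction of functionals, which under the invariant inner product identification is orthogonal projection; hence $\mu_{K_1}(x) = P\mu(x)$ and $\mu_{K_2}(x) = Q\mu(x)$. Applying formula (\ref{GradFormula}) and equality (\ref{GradEquality}) to each factor yields $|\nabla f_{K_i}(x)| = 2|v_{\mu_{K_i}(x)}(x)|$, and similarly $|\nabla f(x)| = 2|v_{\mu(x)}(x)|$.

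Now I invoke Lemma \ref{KeyEstimate1} inequality (\ref{KeyInequality3}) with $\xi = \mu(x)$: there exist a neighbourhood $U$ of $\hat y$ and a constant $c > 0$ such that
\[ |v_{\mu(x)}(x)| \geq c\bigl(|v_{P\mu(x)}(x)| + |v_{Q\mu(x)}(x)|\bigr) \]
for every $x \in V \setminus \{0\}$ with $\hat x \in U$. Multiplying by $2$ and substituting the identifications above gives the claimed inequality. The main obstacle in this argument is the very first step---arranging a product splitting $K \iso K_1 \times K_2$ whose associated Lie algebra decomposition matches the orthogonal decomposition of $\fk$---since the orthogonal complement $\stab(\hat y)^\perp$ need not be rational with respect to the cocharacter lattice of $K$. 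Once this is handled by the density/perturbation remark, the remainder is a direct translation of the already-established Lemma \ref{KeyEstimate1}.
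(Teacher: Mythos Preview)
Your core argument is the same as the paper's: decompose $\fk$ along $\stab(\hat y)$ and its orthogonal complement, identify the orthogonal projections of $\mu$ with the moment maps for the two factors, and invoke inequality (\ref{KeyInequality3}) of Lemma \ref{KeyEstimate1} with $\xi = \mu(x)$. The paper does exactly this in two lines, setting $\fk_1 = \stab(y)$ and $\fk_2 = \fk_1^\perp$ and observing $\mu = \mu_{K_1} \oplus \mu_{K_2}$.

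Where you diverge is in the first paragraph, and that detour is both unnecessary and, as written, not sound. The perturbation idea does not work cleanly: if you move $\Lie(K_1)$ off of $\stab(\hat y)^\perp$, then the projection $P$ appearing in Lemma \ref{KeyEstimate1} is no longer the projection onto $\Lie(K_1)$, so $P\mu(x) \neq \mu_{K_1}(x)$, and the discrepancy $|v_{(P - P_{K_1})\mu(x)}(x)|$ is not obviously controlled by a fixed constant uniformly over the (noncompact) cone $\{x : \hat x \in U\}$. You cannot simply ``absorb a small error into the final constant.''

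The good news is that none of this is needed. The paper simply takes the orthogonal Lie algebra splitting and calls the summands $\fk_1, \fk_2$; the phrase ``subtori $K_1, K_2$ with $K \iso K_1 \times K_2$'' in the statement is used only as a label for these Lie subalgebras and their associated moment maps, and the induction in the proof of Theorem \ref{GlobalEstimate} only ever uses the Lie algebra decomposition and the dimensions of the factors. (If you want to justify the group-level statement literally, note that the inner product is the trace form pulled back from $\mathfrak u(V)$, which is integer-valued on the cocharacter lattice, so orthogonal complements of rational subspaces are rational; but even this is more than the argument requires.) Drop the perturbation paragraph and your proof is the paper's proof.
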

\begin{proof} Let $\mathfrak{k}_1 = \stab(y)$ and 
$\mathfrak{k}_2 = \mathfrak{k}_1^\perp$. Since $\fk$ is abelian, both
$\fk_1$ and $\fk_2$ are Lie subalgebras, and $\fk = \fk_1 \oplus \fk_2$.
Then $\mu_K = \mu_{K_1} \oplus \mu_{K_2}$, and 
$|\mu_K|^2 = |\mu_{K_1}|^2 + |\mu_{K_2}|^2$, so the result follows immediately
from Lemma \ref{KeyEstimate1} and inequality (\ref{KeyInequality3}).
\end{proof}

\begin{proof}[Proof of Theorem \ref{GlobalEstimate}]
By Proposition \ref{ReduceToTorus} we will assume that $K$ is a torus.
By Proposition \ref{StrongerCondition}, it suffices to show that for
each $\mu_c \in \fk$, there is some $\epsilon > 0$ so that
inequality (\ref{GlobalInequality}) holds when
$|\mu(x) - \mu_c| < \epsilon$.
Additionally, since $0$ is always a critical point of $f$, it suffices
to prove the estimate only on $V \setminus \{0\}$.
Furthermore, it suffices 
to show that each point $\hat{y}$ of the unit sphere has a neighbourhood $U$
such that the estimate holds for all $x$ with $\hat{x} \in U$,
since by compactness we can choose finitely many
such neighbourhoods to cover the unit sphere, and this yields the
inequality on $V \setminus \{0\}$.

We will prove the estimate by induction on the dimension of $K$. First
suppose $\dim K = 1$. Then we may assume without loss of generality that 
$K$ acts locally freely on $V \setminus \{0\}$, since otherwise
the fundamental vector field $v_\xi(x)$ vanishes on a nontrivial subspace
and we can restrict our attention to its orthogonal complement.
Then Proposition \ref{FreeCase} yields the desired neighbourhoods and estimates.

Now assume that $\dim K > n$ and we have proved the estimate for tori
of dimension $\leq n$. Without loss of generality, we can assume that there is 
no nonzero vector in $V$ which is fixed by all of $K$ (since we can restrict to
its orthogonal complement).
Let $\hat{y}$ be some point in the unit sphere, and let
$\fk_1 = \stab(\hat{y})$. If $\fk_1 = 0$, we may apply
Proposition \ref{FreeCase} to get a neighbourhood $U_{\hat{y}}$ and a constant 
$k_{\hat{y}}$ such that the estimate holds on $U_{\hat{y}}$.
Otherwise,  let $\fk_2 = \fk_1^\perp$ so that 
$\fk = \fk_1 \oplus \fk_2$,  corresponding to subtori $K_1$ and $K_2$.
Then we may apply Proposition
\ref{InductionStep} to find a neighbourhood $U_{\hat{y}}$ so that
\[ |\nabla f(x)| 
\geq k \left( |\nabla f_{K_1}(x)| + |\nabla f_{K_2}(x)| \right), \]
holds for all $x$ with $\hat{x} \in U_{\hat{y}}$.
Let $P_i: \fk \to \fk_i$, $i = 1, 2$ be the orthogonal projections, 
$\mu_{c,i} = P_i \mu_c$,
and $f_{c,i} = |\mu_{c,i}|^2$. Since $\mu_{K_i}$ are the moment maps for the
action of $K_i$, which are tori of dimension $\leq n$, we may apply
the induction hypothesis to find
a neighbourhood $U$ of $\hat{y}$ and constants $\epsilon > 0$, $c > 0$
so that
\begin{eqnarray*}
|\nabla f_{K_1}(x)| &\geq& k' |f_{K_1}(x) - f_{c,1}|^\frac{3}{4}, \\
|\nabla f_{K_2}(x)| &\geq& k' |f_{K_2}(x) - f_{c,2}|^\frac{3}{4},
\end{eqnarray*}
for all $x$ such that $\hat{x} \in U$ and $|\mu_{K_i}(x) - \mu_{c,i}| < \epsilon$.
For any non-negative numbers $a,b$, we have
$a^\frac{3}{4} + b^\frac{3}{4} \geq (a + b)^\frac{3}{4}$,
so we obtain
\[ |\nabla f(x)|
\geq k''\left(|f_{K_1}(x) - f_{c,1}| + |f_{K_2}(x) - f_{c,2}|\right)^\frac{3}{4}
\geq k''|f(x) - f_c|^\frac{3}{4}, \]  
whenever $|\mu(x) - \mu_c| < \epsilon$, as desired.

To obtain the estimate for the functions $|\muc|^2$ and $|\muhk|^2$, 
we simply note that by Lemma \ref{CrossTermVanish}, the norm of the gradient 
is bounded below by a sum of terms of the form $\left| \nabla |\mu_i|^2 \right|$,
and since we can bound each term individually we obtain a bound for the sum.
\end{proof}

\begin{remark} \label{SecondNeemanRemark}
As pointed out in Remark \ref{FirstNeemanRemark}, Theorem \ref{GlobalEstimate}
is a generalization of \cite[Theorem A.1]{NeemanQuotientVarieties}
and much of the argument is similar.
The main new ingredients are Lemma \ref{KeyEstimate2} and
Proposition \ref{FreeCase}, which are absolutely
essential in handling the general case of a nonzero constant term
in the moment map.
\end{remark}
      
\section{Toric Hyperk\"ahler Orbifolds}
\label{HypertoricSection}
\subsection{Notation and Definitions} 
Let $T$ be a subtorus of the standard $N$-torus $(S^1)^N$,
with quotient $K := (S^1)^N / T$. We have a short exact sequence
\begin{equation} \label{ToriSES}
1  \to  T  \xrightarrow{i}  (S^1)^N  \xrightarrow{\pi}  K  \to  1.
\end{equation}
Taking Lie algebras, we have
\begin{equation}\label{SES}
0  \to  \ft \xrightarrow{i}  \RR^N  \xrightarrow{\pi}  \fk  \to 0,
\end{equation}
\begin{equation}
\label{DualSES}
0 \to \fkd \xrightarrow{\pi^\ast} \RR^N \xrightarrow{i^\ast} \ftd \to 0.
\end{equation}
Recall the standard Hamiltonian action of $(S^1)^N$ on $\CC^N$.
This restricts to a Hamiltonian action of $T$ on $\CC^N$, and hence induces 
an action on  $T^\ast \CC^N$. 
For a generic $(\alpha, \beta) \in \ftd \oplus \ftd_\CC$
we define $M(\alpha, \beta)$ to be the quotient
\begin{equation}
M(\alpha, \beta) := T^\ast \CC^N \rreda{(\alpha, \beta)} T,
\end{equation}
which is a toric hyperk\"ahler orbifold \cite{BielawskiDancerHypertoric}. 
There is a residual Hamiltonian action of $K$ on $M$.
The homeomorphism type of $M(\alpha, \beta)$ is independent of 
$(\alpha, \beta)$ as long as $(\alpha, \beta)$ is generic, so we will
often write $M$ instead of $M(\alpha, \beta)$. 

We can organize the data determining $M$ as follows. We will assume
for the moment that
$M$ is taken to be the reduction at $(\alpha, 0)$ with $\alpha$ generic.
Let $\{e_j\}$ be the standard basis of $\RR^N$. Then we obtain a collection
$A := \{u_j\}$ of weights defined by $u_j := i^\ast(e_j) \in \ftd$, as well as 
a collection of normals $\{n_i\}$ defined by $n_i = \pi(e_i) \in \fk$.
Note that we allow repetitions, i.e. $u_i$ and $u_j$ are considered to be
distinct elements of $A$ for $i \neq j$ even if $u_i = u_j$ as elements of 
$\ftd$, and similarly for the normals.
Using the inner product on $\ft$ induced by the embedding $\ft \into \RR^N$,
we can identify $\ft \iso \ftd$ and we will think of the weights $u_j$
as elements of $\ft$ rather than $\ftd$ whenever it is convenient to do so.
Pick some $d \in \RR^N$ such that $i^\ast(d) = \alpha$.
Then we can define affine hyperplanes $H_i$ by
\begin{equation}
H_i = \{ x \in \fkd \suchthat \inner{n_i}{x} - d_i = 0 \},
\end{equation}
as well as half-spaces
\begin{equation}
H_i^\pm = \{ x \in \fkd \suchthat \pm(\inner{n_i}{x} - d_i) \geq 0 \}.
\end{equation}
This arrangement of hyperplanes will be denoted by $\mathcal{A}$.
It is shown in \cite{BielawskiDancerHypertoric} that the arrangement
$\mathcal{A}$ plays a role in toric hyperk\"ahler geometry analogous
to that of the moment polytope in symplectic toric geometry. In
particular, the arrangement $\mathcal{A}$ determines $M$ up to
equivariant hyperk\"ahler isometry.

\begin{definition} \label{DefnJ}
Let $J \subseteq \{1, \cdots, N\}$, and define a subspace
\begin{equation} \label{DefnTJ}
\ft_J := \Span \{u_j \suchthat j \in J\} \subset \ft
\end{equation}
with corresponding subtorus $T_J \subset T$. We will call the set $J$
\emph{critical} if the following condition is satisfied: $u_j \in \ft_J$
if and only if $j \in J$. 
\end{definition}

If $J$ is critical, we define a subspace $V_J \subset \CC^N$ by
\begin{equation} \label{DefnVJ}
V_J := \Span\{e_j \suchthat j \in J\}.
\end{equation}
The action of $T_J$ preserves $V_J$,
and we may take the hyperk\"ahler quotient 
\begin{equation} \label{DefnMJ}
M_J := T^\ast V_J \rred T_J.
\end{equation}
We will always assume that the reduction is taken at a generic
regular value.

\begin{remark} The critical sets $J$ are precisely the \emph{flats} of
the matroid associated to the collection of vectors $\{u_j\}$.
However, we do not wish to assume familiarity with matroids, so we
choose to avoid using this language. See \cite{HauselSturmfels} for a detailed
discussion of the relation between the geometry of toric hyperk\"ahler 
varieties and the combinatorics of matroids, and \cite{OrientedMatroids}
for matroids in general. 
\end{remark}

The inclusion $i:T \into (S^1)^N$ induces a surjective map of rings
$i^\ast: H_{(S^1)^N}^\ast \to H_T^\ast$, so that 
$H_T^\ast \iso \QQ[u_1, \ldots, u_N] / \ker i^\ast$ as a graded ring.
By abuse of notation we will write $u_j$ to denote
its image in $H_T^\ast$. (Note that we also use the symbol $u_j$ to
denote the vectors $i^\ast(e_j)$, but no confusion should arise as it
should be clear from context which of the two meanings is intended.)

If $J$ is a subset of $\{1, \ldots, N\}$, then we define a class 
$u_J \in H_T^\ast$ by
\begin{equation} \label{EulerClassJ}
u_J := \prod_{i \in J^c} u_i,
\end{equation}
and note that the product is taken over the \emph{complement} of $J$.

\subsection{Analysis of the Critical Sets}  
We now consider a quotient of the form $M(\alpha, \beta)$ with
$\beta$ a regular value of $\muc$. Note that this includes quotients
of the form $M(\alpha, 0)$ as a special case, since we can
always rotate the hyperk\"ahler frame. We identify $T^\ast \CC^N$ with 
$\CC^N \times \CC^N$ and use coordinates $(x, y)$. Shifting $\muc$ by $\beta$, 
we can take it to be
\begin{equation} \label{MomentMapFormula} 
\muc(x,y) = \sum_{i=1}^N x_i y_i u_i - \beta,
\end{equation}
and we will consider Morse theory with the function $f = |\muc|^2$.

\begin{proposition} 
\label{CriticalSets} For a generic parameter $\beta$,
the critical set of $f$ is the disjoint union of sets $C_J$,
where the union runs over all critical subsets $J \subseteq \{1, \ldots, N\}$,
and the sets $C_J$ are defined by
\begin{equation}
C_J = \left( \bigcap_{i \in J} \{ u_i \cdot \muc = 0\} \right) 
\cap \left( \bigcap_{j \not\in J} \{(x_j, y_j) = 0\} \right).
\end{equation}
The Morse index of $C_J$ is given by $\lambda_J = 2(N - \# J)$.
Up to a nonzero constant, the $T$-equivariant Euler class of the negative
normal bundle to $C_J$ is given by the restriction of the class
$u_J$ to $H_T^\ast(C_J)$, where $u_J$ is defined by (\ref{EulerClassJ}).
The $T$-equivariant Poincar\'e series of $C_J$ is equal to
$(1-t^2)^{-r} P(M_J)$, where $r$ is the codimension of $T_J$ in $T$ and
$M_J$ is the quotient defined by (\ref{DefnMJ}). 
\end{proposition}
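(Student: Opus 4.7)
The strategy is to characterize critical points by solving $\nabla f = 0$ directly, analyze the Hessian on the normal slices transverse to each $C_J$, and identify $C_J$ equivariantly with a homotopy model for $M_J$ via Corollary \ref{HomotopyCorollary}. Because $T$ is abelian, Lemma \ref{CrossTermVanish} gives $|\nabla f|^2 = |\nabla f_2|^2 + |\nabla f_3|^2$, so $\nabla f(x,y) = 0$ is equivalent to $v_{\mu_2(x,y)}(x,y) = v_{\mu_3(x,y)}(x,y) = 0$, i.e.\ $\muc(x,y) \in \stab(x,y) \otimes \CC$. Writing this in coordinates, with $S := \{i : (x_i,y_i) \neq 0\}$ one has $\stab(x,y) = \ft_S^\perp$, so criticality is the condition $\langle u_i, \muc(x,y)\rangle = 0$ for all $i \in S$. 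I would then attach to each critical point the flat $J := \{i : u_i \in \ft_S\} \supseteq S$, which is critical by construction; then $(x,y) \in C_J$ because for $i \in J \setminus S$ one has $(x_i, y_i) = 0$ and $u_i \in \ft_S$ forces $\langle u_i, \muc\rangle$ to be a linear combination of the already-vanishing pairings $\langle u_s, \muc\rangle$ for $s \in S$. Membership in another $C_{J'}$ with $J' \neq J$ critical would force additional independent linear constraints on $\beta$ that a generic choice avoids, yielding the disjoint decomposition.

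For the Morse index and equivariant Euler class, I would fix $p \in C_J$ and expand $f$ in the transverse coordinates $(\delta x_j, \delta y_j)$, $j \notin J$. Because $\muc$ is bilinear in $(x,y)$, the value $\muc|_{C_J} =: \muc_0$ is constant on $C_J$ (namely the projection of $-\beta$ onto $\ft_J^\perp \otimes \CC$), and the quadratic-order expansion of $f$ in the transverse directions takes the form $2\sum_{j \notin J} \Re(c_j\, \delta x_j\, \delta y_j) + O(\delta^4)$, where $c_j \in \CC$ is a nonzero multiple of $\langle u_j, \muc_0\rangle$. Each summand is a real quadratic form on $\CC_{x_j} \oplus \CC_{y_j} \iso \RR^4$ whose matrix has eigenvalues $\pm \tfrac{1}{2}|c_j|$ each with multiplicity two, hence signature $(2,2)$ whenever $c_j \neq 0$; for $J$ critical and $\beta$ generic this holds for every $j \notin J$, so $\lambda_J = 2(N - \#J)$. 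The negative $2$-plane in $\CC_{x_j} \oplus \CC_{y_j}$ is of the form $\{(z, -\lambda_j \bar z) : z \in \CC\}$ for a unit complex number $\lambda_j$ determined by the phase of $c_j$; this plane is $T$-invariant (using that $T$ acts by unitaries) and $T$ acts on it through its action on the $z$-coordinate with weight $u_j$. The negative normal bundle therefore splits $T$-equivariantly into complex lines of weight $u_j$, and its equivariant Euler class equals $\prod_{j \notin J} u_j = u_J$ up to a sign from the choice of orientation on each factor.

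For the Poincar\'e series, I would note that the first batch of defining conditions places $C_J$ inside $T^\ast V_J$, while the second batch $\langle u_j, \muc\rangle = 0$, $j \in J$, is precisely the statement that the $T_J$-complex moment map on $T^\ast V_J$ takes the value $\beta_J$, the orthogonal projection of $\beta$ onto $\ft_J \otimes \CC$. Thus $C_J$ coincides with the fiber of this complex moment map at $\beta_J$. For a generic auxiliary $\alpha_J \in \ft_J^\ast$, Corollary \ref{HomotopyCorollary} applied to the linear $T_J$-action on $T^\ast V_J$ gives a $T_J$-equivariant deformation retract of $C_J$ onto $M_J$, so $H_{T_J}^\ast(C_J) \iso H^\ast(M_J)$. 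The complementary subtorus $T_J^\perp \subset T$ (of dimension $r$) acts trivially on $V_J$ and hence on $C_J$, so $H_T^\ast(C_J) \iso H^\ast(M_J) \otimes H^\ast(BT_J^\perp)$, giving the $T$-equivariant Poincar\'e series $(1-t^2)^{-r} P(M_J)$.

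The main obstacle I anticipate is the bookkeeping for the genericity of $\beta$: one must simultaneously ensure that no critical point lies in two different $C_{J'}$ (disjointness) and that $\langle u_j, \muc_0\rangle \neq 0$ for every critical $J$ and every $j \notin J$ (nondegeneracy of the transverse Hessian). Each such failure is a nontrivial linear equation on $\beta$ indexed by a pair $(J, j)$ with $J$ a flat and $j \notin J$, so taking $\beta$ outside a finite union of hyperplanes in $\ft \otimes \CC$ suffices.
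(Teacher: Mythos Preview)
Your approach matches the paper's quite closely. Both identify the critical set by solving $\nabla f = 0$ directly (the paper via the explicit formula $|\nabla f|^2 = 4\sum_j(|x_j|^2+|y_j|^2)|u_j\cdot\muc|^2$, you via $\muc \in \stab(x,y)\otimes\CC$; these are equivalent), both analyze the transverse Hessian via the bilinear forms $\Re(c_j x_j y_j)$, and both compute $P_T(C_J)$ by splitting $T$ as $T_J$ times a complementary torus acting trivially on $T^\ast V_J$ and invoking Corollary~\ref{HomotopyCorollary} for the $T_J$-action on $T^\ast V_J$ (the paper does this last step implicitly). Your assignment of $J$ to a critical point as the flat generated by $S = \{i : (x_i, y_i) \neq 0\}$ is a priori smaller than the paper's $J = \{j : u_j \cdot \muc = 0\}$, but for generic $\beta$ they coincide, and this is exactly part of the genericity condition you isolate at the end. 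Your Euler class argument---identifying the negative $2$-plane explicitly as $\{(z, -\lambda_j \bar z)\}$ and reading off the $T$-weight $u_j$---is in fact more detailed than the paper's, which simply asserts the result from the weight structure on $(x_j, y_j)$.

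There is one small gap in your Morse index computation. You expand $f$ only in the variables $(\delta x_j, \delta y_j)$ for $j \notin J$ and find $2(N - \#J)$ negative directions there, but the normal bundle to $C_J$ also contains directions inside $T^\ast V_J$ (those transverse to the level set of the $T_J$-moment map), and you have not checked that the Hessian is block diagonal with respect to this splitting, nor that it is $\geq 0$ on the $T^\ast V_J$ block. The paper handles this by writing $|\muc|^2 = |\mu_J|^2 + |\mu_{J^c}|^2 + 2\Re\langle\mu_J,\mu_{J^c}\rangle$, observing that $|\mu_J|^2$ has an absolute minimum along $C_J$ (so its Hessian is $\geq 0$) and that the cross term vanishes to third order at $C_J$ (so there are no off-diagonal Hessian entries). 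This is easy to supply in your framework as well, but without it your computation only gives a lower bound for $\lambda_J$.
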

\begin{proof} Using equation (\ref{MomentMapFormula}), we see that
\[ |\nabla f(x,y)|^2 = 
 4 \sum_j \left(|x_j|^2 + |y_j|^2 \right) |u_j \cdot \muc(x,y)|^2. \]
Since this is a sum of non-negative terms, if $\nabla f(x,y) = 0$, each term in 
the sum must be 0. Thus for each $j$, we must have either that $x_j = y_j = 0$ or
that $u_j \cdot \muc = 0$. Let us fix some particular critical point 
$(x_c,y_c) \in T^\ast \CC^N$, and 
let $J$ be the set of indices $j \in \{1, \ldots, N\}$ 
for which $u_j \cdot \muc(x_c,y_c) = 0$.
By construction $J$ is critical and $(x_c, y_c) \in C_J$. 
Hence every critical point is contained in $C_J$ for some critical set $J$.
Conversely, $\nabla f = 0$ on $C_J$ by construction, so we see that
$\Crit f = \cup_J C_J$, where the union runs over critical sets $J$.
Note that $C_J \neq \emptyset$ when $J$ is critical.

To see that the union is disjoint, write $\muc = \mu_J + \mu_{J^c}$, where
\begin{eqnarray}
\label{DefnMuJ}
\mu_J(x,y) &=& \sum_{i \in J} x_i y_i u_i - \beta_J, \\
\label{DefnMuJc}
\mu_{J^c}(x,y) &=& \sum_{i \not\in J} x_i y_i u_i - \beta_J^\perp,
\end{eqnarray}
$\beta_J$ is the projection of $\beta$ to $\ft_J$, and 
$\beta_J^\perp = \beta - \beta_J$. Then at $(x_c, y_c)$, we have
$\mu_J(x_c, y_c) = 0$ and $\mu_{J^c}(x_c, y_c) = -\beta_J^\perp$.
Thus on $C_J$, $\muc$ takes the value $-\beta_J^\perp$.
For generic $\beta$, we have $\beta_J^\perp \neq \beta_{J'}^\perp$
for $J \neq J'$, hence $C_J \cap C_{J'} = \emptyset$ for $J \neq J'$.

To determine the Morse index of $C_J$, we compute
\[ |\muc(x,y)|^2 = |\mu_J(x,y)|^2 + |\mu_{J^c}(x,y)|^2 
 + 2 \mathrm{Re} \inner{\mu_J(x,y)}{\mu_{J^c}(x,y)}. \]
The term $|\mu_J(x,y)|^2$ has an absolute minimum at $(x_c, y_c)$,
and so does not contribute to the Morse index. Since $\beta_J^\perp$ is
orthogonal to $\mu_J(x,y)$ for all $(x,y)$, the third term can be
rewritten as
\[ 2 \mathrm{Re} \inner{\mu_J(x,y)}{\mu_{J^c} + \beta_J^\perp}. \]
Looking at the expressions
(\ref{DefnMuJ}) and (\ref{DefnMuJc}) for $\mu_J$ and $\mu_{J^c}$, we see that
at $(x_c, y_c)$, $\mu_J$ vanishes to first order, whereas 
$\mu_{J^c} + \beta_J^\perp$
vanishes to second order. Hence the inner product of these terms vanishes to 
third order and does not affect the Morse index. Thus the Morse index is
determined solely by the second term, which is
\begin{equation} \label{RelevantTerm}
|\mu_{J^c}(x,y)|^2 
 = |\beta_J^\perp|^2 - 2 \mathrm{Re} 
 \sum_{i \in J^c} \inner{\beta_J^\perp}{u_i} x_i y_i + \textrm{fourth order}.
\end{equation}
For generic $\beta$, we have $\inner{\beta_J^\perp}{u_i} \neq 0$ for all
$i \in J^c$, and since each term in the sum is the real part of the 
holomorphic function $\inner{\beta_J^\perp}{u_i} x_i y_i$ it must contribute
$2$ to the Morse index. Hence the Morse index is 
$\lambda_J := 2\# J^c = 2(N - \#J)$. Since the $j$th factor of $(S^1)^N$
acts on $(x_j, y_j)$ with weight $(1, -1)$, this also shows that the
equivariant Euler class is given by a nonzero multiple of $u_J$
(defined by (\ref{EulerClassJ})), as claimed.

Finally, we compute the equivariant Poincar\'e series of $C_J$.
Let $V_J \in \CC^N$ be defined as above, and let $K_J \subset T$ be the
subtorus that acts trivially on $V_J$. Then we have an isomorphism
$T \iso T_J \times K_J$. Let $r$ be the dimension of $K_J$, which is the
codimension of $T_J$ in $T$. The moment map for the action of $T_J$ on
$T^\ast V_J$ is given by the restriction of $\mu_J$ (as defined by
equation (\ref{DefnMuJ})) to $T^\ast V_J$.
Hence $C_J = \mu_J^{-1}(0) \cap T^\ast V_J$, and
\[ P_T(C_J) = P_{T_J \times K_J}(C_J) 
= (1-t^2)^{-r} P_{T_J}(C_J) = (1-t^2)^{-r} P(M_J). \]
\end{proof} 

\begin{remark} Note that the critical sets $C_J$ are all nonempty, and
that the Morse indices do not depend on $\beta$ (as long as it is generic).
This is due to the fact that $\muc$ is holomorphic. In the real case,
i.e. $|\mur - \alpha|^2$, the critical sets and Morse indices have a much
more sensitive dependence on the level $\alpha$.
\end{remark}

By Theorem \ref{SurjectivityCriterion} and Corollary \ref{SurjectivityForTori},
the hyperk\"ahler Kirwan map 
$\kappa: H_T^\ast \to H^\ast(M)$
is surjective, and its kernel is the ideal generated by the equivariant
Euler classes of the negative normal bundles to the components of the critical
set. Since we described these explicitly in Proposition \ref{CriticalSets},
we immediately obtain the following description of $H^\ast(M)$.
\begin{theorem} \label{HypertoricCohomologyRing}
The cohomology ring $H^\ast(M)$ is isomorphic to $H_T^\ast / \ker \kappa$,
where $\ker \kappa$ is the ideal generated by the classes $u_J$,
for every proper critical set $J \subset \{1, \cdots, N\}$.
\end{theorem}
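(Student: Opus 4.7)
My plan is to assemble three ingredients already in hand: the surjectivity half of the hyperk\"ahler Kirwan theorem, the Morse-theoretic description of its kernel, and the explicit computation of the critical data for $|\muc|^2$ on $T^\ast \CC^N$.

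First, since $T$ is a torus acting linearly on $\CC^N$, Corollary \ref{SurjectivityForTori} applies to $M = M(\alpha,\beta)$ and gives surjectivity of the hyperk\"ahler Kirwan map $\kappa : H_T^\ast \to H^\ast(M)$, so $H^\ast(M) \iso H_T^\ast / \ker \kappa$ and it remains only to pin down $\ker \kappa$. Theorem \ref{SurjectivityCriterion} (applicable because $|\muc - \beta|^2$ is flow-closed by Theorem \ref{GlobalEstimate} combined with Proposition \ref{FlowClosedness}) identifies $\ker \kappa$ as the ideal generated by all classes in $H_T^\ast$ whose restriction to some critical component $C$ of $|\muc|^2$ equals the equivariant Euler class of the negative normal bundle to $C$.

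Second, I would invoke Proposition \ref{CriticalSets}, which enumerates the critical components as the sets $C_J$ indexed by critical $J \subseteq \{1,\ldots,N\}$ and shows that the equivariant Euler class of the negative normal bundle to $C_J$ is a nonzero scalar multiple of the restriction to $H_T^\ast(C_J)$ of the class $u_J = \prod_{i \notin J} u_i \in H_T^\ast$. This immediately places each $u_J$ with $J$ critical inside $\ker \kappa$. For the distinguished critical set $J = \{1,\ldots,N\}$, which is the absolute minimum of $|\muc|^2$, the empty product gives $u_J = 1$ and Morse index zero, so this contributes nothing and we may restrict to proper critical $J$, matching the statement of the theorem.

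The only subtle step is to show that the $u_J$ for proper critical $J$ already generate all of $\ker\kappa$, not merely a subideal: the Morse-theoretic description allows arbitrary classes on $C_J$ multiplying the Euler class. This is where I expect the main care to be needed. The argument is that the Thom-Gysin short exact sequences associated to the stratification, applied inductively over the critical values of $|\muc|^2$, show that $\ker\kappa$ is generated as an $H_T^\ast$-ideal by any family of classes in $H_T^\ast$ whose restrictions to each $C_J$ exhaust the image of multiplication by the Euler class on $H_T^\ast(C_J)$. Since $H_T^\ast(C_J)$ is itself a module over $H_T^\ast$ and the Euler class restriction equals $u_J\big|_{C_J}$ up to a unit, every such generator is, modulo lower strata, an $H_T^\ast$-multiple of $u_J$. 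Iterating from the top stratum downward (which is legitimate because the Morse indices are strictly positive on proper critical sets, so the Thom-Gysin sequences split by the Atiyah-Bott lemma as recalled in Section \ref{Quotients}), we conclude that $\ker\kappa$ is generated by $\{u_J : J \subsetneq \{1,\ldots,N\} \text{ critical}\}$, as claimed.
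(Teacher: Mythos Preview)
Your proof is correct and takes essentially the same route as the paper: the theorem is presented there as an immediate consequence of Theorem \ref{SurjectivityCriterion}, Corollary \ref{SurjectivityForTori}, and the explicit Euler-class computation in Proposition \ref{CriticalSets}. Your final paragraph addresses a point the paper leaves implicit---namely, why a single lift $u_J$ per critical component suffices to generate $\ker\kappa$---and your inductive Thom--Gysin argument is the standard one, resting ultimately on surjectivity of the restriction $H_T^\ast \to H_T^\ast(C_J)$ (itself an instance of Kirwan surjectivity for the smaller quotients $M_J$).
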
 \qed

\begin{remark} The cohomology ring $H^\ast(M)$ was first computed by Konno
\cite[Theorem 3.1]{KonnoHypertoric}. The relations defining $\ker \kappa$
obtained by Konno are not identical to those in Theorem 
\ref{HypertoricCohomologyRing}, 
but it is not difficult to see that they are equivalent.
It was pointed out to us by Proudfoot that this equivalence is a 
special case of Gale duality \cite{OrientedMatroids}.  
\end{remark}
 
\begin{remark} Under the assumption that $M$ is smooth (and not just an
orbifold), the same result holds with $\ZZ$ coefficients \cite{KonnoHypertoric}.
In certain cases, the Kirwan method can be extended to handle 
cohomology with $\ZZ$ coefficients,
provided that the group action satisfies certain additional hypotheses
\cite{TolmanWeitsman}.
\end{remark}

Quotients of the form $M(\alpha, 0)$ inherit an additional $S^1$-action
induced by the $S^1$ action on $T^\ast \CC^N$ given by $t \cdot(x, y) = (x, ty)$.
This action preserves the K\"ahler structure and rotates the holomorphic 
symplectic form (i.e. $t^\ast \omegac = t \omegac$). 
Let us fix some particular $\alpha$ and denote $M := M(\alpha, 0)$.
We would like to understand the $S^1$-equivariant 
cohomology $H_{S^1}^\ast(M)$ (which, unlike the ordinary cohomology of 
$M$, \emph{does} depend on the choice of $\alpha$). 
To compute the $S^1$-equivariant cohomology, it is more convenient 
to work directly with $|\muhk|^2 = |\mur|^2 + |\muc|^2$, where
\begin{eqnarray}
\label{MuRFormula} \mur &=& \sum_i\left(|x_i|^2 - |y_i|^2 \right)u_i - \alpha, \\
\label{MuCFormula} \muc &=& \sum_i x_i y_i u_i.
\end{eqnarray}
By Theorems \ref{SurjectivityCriterion} and \ref{GlobalEstimate} it is 
minimally degenerate and flow-closed, and since it is also $S^1$-invariant 
we can consider
the $T \times S^1$-equivariant Thom-Gysin sequence. The usual arguments
of the Kirwan method extend to the $S^1$-equivariant setting, so
we obtain surjectivity of map $\kappa_{S^1}: H_{T \times S^1}^\ast \to H_{S^1}^\ast(M)$,
and its kernel is generated by the $T \times S^1$-equivariant Euler classes
of the negative normal bundles to the critical sets of $|\muhk|^2$.

To find the critical sets of $|\muhk|^2$ and to compute the equivariant
Euler classes, we can repeat the arguments of Proposition \ref{CriticalSets} 
almost without modification. The components of the critical set 
are again indexed by critical subsets $J$. The only important difference
is that since we now work with 
$T \times S^1$-equivariant cohomology, we have to be more careful 
in computing the equivariant Euler classes. Let us make the identification
$H_{T \times S^1}^\ast \iso H_T^\ast[u_0]$.
When we expand $|\muhk|^2$ about a critical point as in 
the proof of Proposition \ref{CriticalSets}, the 
relevant term is now (cf. equation (\ref{RelevantTerm}))
\[  - 2 \sum_{i \in J^c} \inner{\alpha_J^\perp}{u_i} 
\left(|x_i|^2 - |y_i|^2 \right), \]
where $\alpha_J^\perp$ is defined in a manner analogous to $\beta_J^\perp$ as 
in the proof of Proposition \ref{CriticalSets}.
We see that the $x_i$ term appears with an overall negative sign
if $\inner{\alpha_J^\perp}{u_i} > 0$, otherwise it is the $y_i$ term
that appears with a negative sign. Since $S^1$ acts on $x$ with weight
$0$ and acts on $y$ with weight $1$, and since $T$ acts on $x$ and $y$ with
oppositely signed weights, we find that the equivariant Euler class
is given (up to an overall constant) by
\begin{equation}
\tilde{u}_J := \prod_{i \in J^+} u_i \prod_{j \in J^-} (u_0 - u_j),
\end{equation}
where 
\begin{equation}
J^\pm := \{ i \in J^c \suchthat \pm \inner{\alpha_J^\perp}{u_i} > 0 \}.
\end{equation}
Thus we obtain the following.

\begin{theorem} The $S^1$-equivariant cohomology $H_{S^1}^\ast(M)$ is
isomorphic to 
\[ H_{T \times S^1}^\ast / \ker \kappa_{S^1}, \]
where $\ker \kappa_{S^1}$ is the ideal generated by the classes $\tilde{u}_J$,
for every proper critical subset $J$. 
\end{theorem}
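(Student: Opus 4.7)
The plan is to adapt the Morse-theoretic argument that produced Theorem \ref{HypertoricCohomologyRing} to the $T \times S^1$-equivariant setting, using $|\muhk|^2$ in place of $|\muc|^2$. The key inputs are already in hand: $|\muhk|^2$ is minimally degenerate and flow-closed by Theorem \ref{SurjectivityCriterion} combined with Theorem \ref{GlobalEstimate}, and the extra $S^1$ acting by $t \cdot (x,y) = (x, ty)$ commutes with $T$ and preserves $|\muhk|^2$, so its gradient flow is $T \times S^1$-equivariant. Consequently the stable manifolds $S_C$ form a $T \times S^1$-invariant stratification of $T^\ast \CC^N$.

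First I would invoke the $T \times S^1$-equivariant version of Kirwan's argument. Since the negative normal bundle to each critical component $C$ is a $T \times S^1$-equivariant vector bundle whose $T \times S^1$-equivariant Euler class is not a zero divisor (by the Atiyah-Bott lemma, once we verify the appropriate nondegeneracy of the $T \times S^1$ action on the fibres), the equivariant Thom-Gysin sequence splits into short exact sequences. Inducting over critical values exactly as in Theorem \ref{KirwanSurjectivity} yields surjectivity of the restriction $H_{T \times S^1}^\ast(T^\ast \CC^N) \to H_{T \times S^1}^\ast(\muhk^{-1}(0))$, and identifies the kernel as the ideal generated by the $T \times S^1$-equivariant Euler classes of the negative normal bundles. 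Since $T$ acts locally freely on $\muhk^{-1}(0)$ (by genericity of $\alpha$) and $S^1$ commutes with $T$, one has $H_{T \times S^1}^\ast(\muhk^{-1}(0)) \iso H_{S^1}^\ast(M)$, giving surjectivity of $\kappa_{S^1}$.

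Next I would identify the critical sets and their equivariant Euler classes. The critical set computation is identical to that of Proposition \ref{CriticalSets} with $\muc$ replaced by $\muhk$: critical sets are indexed by the critical subsets $J$, because the gradient equation again decouples coordinate by coordinate. The only new ingredient is the $S^1$-weight bookkeeping in the Hessian. Writing $H_{T \times S^1}^\ast \iso H_T^\ast[u_0]$, and expanding $|\muhk|^2$ to second order about a point of $C_J$ as sketched in the paragraph preceding the theorem statement, the negative directions along the normal bundle split as the sum over $i \in J^c$ of the $x_i$-line when $\inner{\alpha_J^\perp}{u_i} > 0$ and the $y_i$-line when $\inner{\alpha_J^\perp}{u_i} < 0$. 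Since $T$ acts on $(x_i, y_i)$ with weights $(u_i, -u_i)$ and $S^1$ acts with weights $(0,1)$, the $T \times S^1$-equivariant weight of the negative line is $u_i$ in the first case and $u_0 - u_i$ in the second. Multiplying over $i \in J^c$ gives, up to an overall nonzero constant, the class $\tilde{u}_J = \prod_{i \in J^+} u_i \prod_{j \in J^-} (u_0 - u_j)$.

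Combining the two steps yields the stated presentation $H_{S^1}^\ast(M) \iso H_{T \times S^1}^\ast / (\tilde{u}_J : J \text{ a proper critical subset})$. The only step requiring any real care is verifying that the Atiyah-Bott lemma applies in the $T \times S^1$-equivariant setting, i.e.\ that $\tilde{u}_J$ is a non-zero-divisor; this follows because for $J^- = \emptyset$ the class $\tilde{u}_J = u_J$ is already known to be a non-zero-divisor by the abelian case of the lemma, and in general $u_0 - u_j$ is coprime (in the polynomial ring $H_T^\ast[u_0]$) to the classes built from $u_i$'s, so the same conclusion holds. Everything else is a routine transcription of the Morse-theoretic machinery already deployed for $|\muc|^2$.
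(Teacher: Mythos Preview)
Your proposal is correct and follows essentially the same route as the paper: use the $T \times S^1$-invariance of $|\muhk|^2$ together with its minimal degeneracy and flow-closedness (Theorems \ref{SurjectivityCriterion} and \ref{GlobalEstimate}) to run the Kirwan argument $T \times S^1$-equivariantly, then repeat the Hessian computation of Proposition \ref{CriticalSets} with the extra $S^1$-weights to identify the Euler classes as $\tilde{u}_J$. Your final paragraph on the Atiyah--Bott lemma is slightly more explicit than the paper (which simply asserts that the usual Kirwan arguments extend), though the non-zero-divisor property is really a statement in $H_{T \times S^1}^\ast(C_J)$ rather than in the polynomial ring $H_T^\ast[u_0]$, and is guaranteed by the standard Atiyah--Bott criterion since each normal weight $u_i$ or $u_0 - u_j$ is nonzero.
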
 \qed
 
\begin{remark} The $S^1$-equivariant cohomology rings were first
computed by Harada and Proudfoot 
\cite{HaradaProudfoot}. As in Theorem \ref{HypertoricCohomologyRing}, 
our description of the kernel is dual (but equivalent).
In this instance, the duality is for \emph{oriented} matroids.
\end{remark}

\begin{remark} Note that unlike the ordinary cohomology ring,
the $S^1$-equivariant cohomology ring depends explicitly on the 
parameter $\alpha$.
\end{remark}

\subsection{Hyperk\"ahler Modifications}
There is a natural operation called \emph{modification} 
\cite{DancerSwannModifying} defined on 
hyperk\"ahler manifolds (or orbifolds) equipped with a Hamiltonian
$S^1$ action, which is the hyperk\"ahler
analogue of symplectic cutting \cite{LermanCuts}.
We will show in this section that the critical sets $C_J$ can be
understood inductively in terms of modifications and quotients.

Let $M := T^\ast \CC^N \rred T$ be a toric hyperk\"ahler orbifold.
This has a residual Hamiltonian action of a torus $K$. Fix some
particular $S^1$ subgroup of $K$.
Then we can consider $M \times T^\ast \CC$, which has two commuting
$S^1$ actions, diagonal and anti-diagonal. We define the modification
$\tilde{M}$ of $M$ with respect to this $S^1$ action to be the hyperk\"ahler 
quotient 
\begin{equation} \label{DefnMTilde}
 \tilde{M} := (M \times T^\ast \CC) \rred S^1 
= T^\ast \CC^{N+1} \rred (T \times S^1),
\end{equation}
where the quotient is by the anti-diagonal $S^1$.
We also consider the quotient
\begin{equation} \label{DefnMHat}
\hat{M} := M \rred S^1 = T^\ast \CC^N \rred (T \times S^1).
\end{equation}
We will use the notation $\tilde{T} := T \times S^1$ so 
that $\tilde{M}$ and $\hat{M}$ are quotients by $\tilde{T}$.
Let $\mu, \tilde{\mu}$, and
$\hat{\mu}$ denote the respective (complex) moment maps, and let
$A$, $\tilde{A}$, and $\hat{A}$ denote the respective collections of
weights. The critical
sets of $|\mu|^2$, $|\tilde{\mu}|^2$, and $|\hat{\mu}|^2$ are 
defined with respect to  $A, \tilde{A}$, and $\hat{A}$.
We can relate the weights $\tilde{A}$ and $\hat{A}$ corresponding
to a modification and quotient of $M$ to the weights $A$ as follows.

\begin{lemma} \label{Modification}
Let $\tilde{A} = \{\tilde{u}_j\}_{j=1}^{N+1}$. 
Then $\hat{A} = \{\tilde{u}_j\}_{j=1}^{N}$ and $A = \{u_j\}_{j=1}^N$, where
$u_j$ is the image of $\tilde{u}_j$ after quotienting by
$\Span\{\tilde{u}_{N+1}\}$.
\end{lemma}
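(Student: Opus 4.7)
The plan is to unravel all three quotients via reduction in stages, describe $\tilde T = T \times S^1$ explicitly as a subtorus of $(S^1)^N$ (for $\hat M$) and of $(S^1)^{N+1}$ (for $\tilde M$), and then read off the three weight collections by dualizing the defining inclusions.

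First I would fix conventions. The chosen subgroup $S^1 \subset K = (S^1)^N/T$ corresponds to a vector $\eta \in \fk$; pick any lift $\tilde\eta \in \RR^N$ with $\pi(\tilde\eta) = \eta$. By reduction in stages,
\[ \hat M = M \rred S^1 = T^\ast \CC^N \rred \tilde T, \]
where the Lie algebra inclusion $\ft \oplus \RR \to \RR^N$ is given by $(\xi, s) \mapsto i(\xi) + s\tilde\eta$. For $\tilde M$, the anti-diagonal $S^1$ acts on $M \times T^\ast \CC$ via the chosen $S^1$ on $M$ together with weight $-1$ on the base $\CC$; folding this together with the reduction defining $M$ gives
\[ \tilde M = T^\ast \CC^{N+1} \rred \tilde T, \]
where the Lie algebra inclusion $\ft \oplus \RR \to \RR^{N+1}$ is $(\xi, s) \mapsto (i(\xi) + s\tilde\eta,\, -s)$.

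Next I would compute the weights by dualizing. The weights of either quotient live in the Lie algebra dual $\ftd \oplus \RR$ of $\tilde T$. Dualizing the inclusion for $\hat M$ gives
\[ \hat u_j = (u_j, \tilde\eta_j) \in \ftd \oplus \RR, \qquad j = 1, \ldots, N. \]
The same computation for $\tilde M$ yields $\tilde u_j = (u_j, \tilde\eta_j)$ for $j = 1, \ldots, N$, while $e_{N+1}$ pairs only with the $-s$ component of the inclusion, so $\tilde u_{N+1} = (0, -1)$.

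Both claims then follow by inspection. Comparing formulas, $\tilde u_j = \hat u_j$ for $j \leq N$, giving $\hat A = \{\tilde u_j\}_{j=1}^N$. For the second claim, $\tilde u_{N+1} = (0,-1)$ spans the $\RR$ summand of $\ftd \oplus \RR$, so quotienting by $\Span\{\tilde u_{N+1}\}$ canonically identifies the quotient with $\ftd$, under which $\tilde u_j = (u_j, \tilde\eta_j) \mapsto u_j$, recovering the original weights $A$. The only real obstacle is bookkeeping: one must check that the choice of lift $\tilde\eta$ is immaterial (different lifts differ by $i(\zeta)$ for $\zeta \in \ft$, which merely reparameterizes the splitting $\tilde T \iso T \times S^1$ and leaves the weight collection invariant), and one must keep careful track of the sign arising from the anti-diagonal convention. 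Neither is a genuine difficulty.
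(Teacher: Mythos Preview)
Your proof is correct and essentially the same as the paper's. The paper phrases the argument in matrix language---representing the embedding $\ft \into \RR^N$ by a matrix $B$, then obtaining $\hat B$ by adjoining a column (your $\tilde\eta$) and $\tilde B$ by further adjoining the row $(0,\ldots,0,-1)$, and reading off the weights as rows---while you write out the Lie algebra inclusions and dualize explicitly; but the content is identical.
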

\begin{proof} The weights are determined by the embeddings
$\ft \into \RR^N$, $\tilde{\ft} \into \RR^{N+1}$, and $\tilde{\ft} \into \RR^N$.
Note that $\tilde{\ft} \iso \ft \oplus \RR$.
If we pick a basis of $\ft$ (and use the standard basis of $\RR^N$),
we can represent the embedding $\ft \into \RR^N$ by some matrix $B$.
The $S^1$ action on $M$ is determined by
specifying its weights on $\CC^N$; this is equivalent to adjoining a column
to $B$. This gives the matrix $\hat{B}$ determining $\tilde{\ft} \into \RR^N$.
Finally, to obtain the modification $\tilde{M}$, we let the $S^1$ act
on an additional copy of $\CC$ with weight $-1$. This amounts to
adjoining the row $(0, \ldots, 0, -1)$ to $\hat{B}$, to obtain
$\tilde{B}$ determining $\tilde{\ft} \into \RR^{N+1}$.
Since the weights $u_j$, $\tilde{u}_j$, and $\hat{u}_j$ correspond to
the rows of $B$, $\tilde{B}$, and $\hat{B}$, respectively,
the result follows from this description.
\end{proof}

Now that we understand the relationship between the weights, we describe
the relationship between the critical subsets $J$, which are defined
with respect to the weights. We will say that 
$J \subseteq \{1, \ldots, N+1\}$ is critical for $A$ (respectively
$\tilde{A}, \hat{A}$) if it indexes a component of the critical set
of $|\mu|^2$ (respectively $|\tilde{\mu}|^2$, $|\hat{\mu}|^2$).
If $N+1 \in J$ then we will not consider it to be critical for $A$ 
(cf. Definition \ref{DefnJ}).

\begin{lemma} \label{CriticalRelation}
 Let $J \subseteq \{1, \cdots, N\}$. Then
\begin{enumerate}
\item $J$ is critical for $A$ if and only if $J \cup\{N+1\}$
is critical for $\tilde{A}$.
\item If $J$ is critical for $A$, then $J$ is critical
for $\hat{A}$.
\item $J$ is critical for $\hat{A}$ if and only if
at least one of $J$ or $J \cup \{N+1\}$ is critical for $\tilde{A}$.
\end{enumerate}
\end{lemma}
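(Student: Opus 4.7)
The approach is to unpack the definition of ``critical'' into a pure linear-algebra condition on spans and systematically exploit the relationship from Lemma \ref{Modification}: the quotient map $\tilde\ft \to \tilde\ft/\Span\{\tilde u_{N+1}\} \iso \ft$ sends $\tilde u_j \mapsto u_j$ for $j \leq N$. All three statements then amount to tracking how linear spans behave under this projection and under deletion of the element $N+1$. Recall that by Definition \ref{DefnJ}, $J$ is critical for a collection of weights precisely when, for every index $j$ outside $J$ (within the appropriate ground set), the corresponding weight fails to lie in the span of $\{u_i\suchthat i \in J\}$.

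For (1), the key observation is that for $j \in \{1,\dots,N\}\setminus J$, we have $\tilde u_j \in \Span\{\tilde u_i \suchthat i \in J \cup \{N+1\}\}$ if and only if $u_j \in \Span\{u_i \suchthat i \in J\}$: the forward implication follows by projecting, and the reverse by lifting a dependence and adding a suitable multiple of $\tilde u_{N+1}$. Since $\tilde u_{N+1}$ is tautologically in the left-hand span, this equivalence is exactly the statement that $J \cup \{N+1\}$ is critical for $\tilde A$ if and only if $J$ is critical for $A$. For (2), I would argue directly: if $\tilde u_j \in \Span\{\tilde u_i \suchthat i \in J\}$ for some $j \in \{1,\dots,N\}\setminus J$, projecting gives $u_j \in \Span\{u_i \suchthat i \in J\}$, contradicting criticality of $J$ for $A$; the other direction of the flat condition for $\hat A$ (indices in $J$) is trivial.

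For (3), the $(\Leftarrow)$ direction is immediate: if $J$ is critical for $\tilde A$ then restricting the non-containment condition to $\{1,\dots,N\}\setminus J$ shows $J$ is critical for $\hat A$, while if $J \cup \{N+1\}$ is critical for $\tilde A$ we combine (1) and (2). For the $(\Rightarrow)$ direction I would do a case analysis on whether $\tilde u_{N+1}$ lies in $\Span\{\tilde u_i \suchthat i \in J\}$. If it does not, then for $j \in \{1,\dots,N\}\setminus J$ the criticality of $J$ for $\hat A$ gives $\tilde u_j \notin \Span\{\tilde u_i \suchthat i \in J\}$, so $J$ itself is critical for $\tilde A$. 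If it does, then $\Span\{\tilde u_i \suchthat i \in J \cup \{N+1\}\} = \Span\{\tilde u_i \suchthat i \in J\}$, and the same non-containment now upgrades to the statement that $J \cup \{N+1\}$ is critical for $\tilde A$. This case split is really the only place any thought is required; the rest is bookkeeping, and no serious obstacle is expected.
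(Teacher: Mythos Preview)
Your proposal is correct and follows essentially the same argument as the paper: both proofs reduce all three statements to tracking spans under the quotient map $\tilde\ft \to \tilde\ft/\Span\{\tilde u_{N+1}\}$, handle (1) via project/lift, (2) by direct projection, and (3) by the same case split on whether $\tilde u_{N+1} \in \tilde\ft_J$. The only cosmetic difference is that the paper phrases things as ``suppose $\tilde u_i \in \tilde\ft_{J\cup\{N+1\}}$ and show $i \in J\cup\{N+1\}$'' rather than your contrapositive framing, but the content is identical.
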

\begin{proof} 
Let $\tilde{A} = \{\tilde{u}_j\}_{j=1}^{N+1}$. 
By the previous lemma, 
$\hat{A} = \{\tilde{u}_j\}_{j=1}^N$ and $A = \{u_j\}_{j=1}^N$,
where $u_j$ is the image of $\tilde{u}_j$ after quotienting by $\Span\{u_{N+1}\}$.

To prove (1), first suppose that $J$ is critical for $A$.
Suppose that there is some $\tilde{u}_i \in \tilde{\ft}_{J\cup\{N+1\}}$.
If $i = N+1$ then $i \in J \cup\{N+1\}$ and there is nothing to check,
so suppose that $i \neq N+1$. Applying the quotient map, we see that
$u_i \in \ft_J$ (since $\tilde{u}_{N+1}$ goes to 0), and since $J$ was
critical for $A$ we see that $i \in J \subset J \cup \{N+1\}$. Hence
$J \cup \{N+1\}$ is critical for $\tilde{A}$. 

Conversely, suppose that $J \cup \{N+1\}$ is critical for $\tilde{A}$.
Suppose that there is some $u_i \in \ft_J$. Then if
$u_i = \sum_{j \in J} a_j u_j$, we see that
$\tilde{u}_i - \sum_{j \in J} a_j \tilde{u}_j$ is in the kernel of the projection,
and so is some multiple of $\tilde{u}_{N+1}$. Hence 
$\tilde{u}_i \in \tilde{\ft}_{J \cup \{N+1\}}$. Since $J \cup \{N+1\}$ is
critical for $\tilde{A}$, we must have $i \in J \cup \{N+1\}$.
But $i \neq N+1$ by assumption, so we have $i \in J$.

To prove (2), suppose that $J$ is critical for $A$.
If $\tilde{u}_i \in \tilde{\ft}_J$, then applying the quotient map we find
$u_i \in \ft_J$. Hence $i \in J$.

To prove (3), first suppose that $J \cup \{N+1\}$ is critical for
$\tilde{A}$. Then by (1), $J$ is critical for $A$, and by (2) $J$
is critical for $\hat{A}$. On the other hand, if $J$ is critical
for $\tilde{A}$, then it is certainly critical for $\hat{A}$.
This establishes one direction.

Conversely, suppose that $J$ is critical for $\hat{A}$.
If $\tilde{u}_{N+1} \not\in \tilde{\ft}_J$ then $J$ is critical for
$\tilde{A}$; otherwise $\tilde{u}_{N+1} \in \tilde{\ft}_{J}$ and thus
$J \cup \{N+1\}$ is critical for $\tilde{A}$.
\end{proof}  

We rephrase the preceding lemma as the following trichotomy:
\begin{lemma} \label{Trichotomy}
Let $J \subseteq \{1, \cdots, N\}$ be a 
critical subset with respect to $\hat{A}$. Then
exactly one of the following cases occurs:
\begin{enumerate}
\item $J$ is critical for $\tilde{A}$ and $J$ is not
critical for $A$.
\item $J$ is critical for $A$ and both $J$ and $J \cup \{N+1\}$
are critical for $\tilde{A}$.
\item $J$ is critical for $A$ and $J \cup\{N+1\}$ is 
critical for $\tilde{A}$, while $J$ is not.
\end{enumerate}
Moreover, every critical subset for $A$ and $\tilde{A}$
occurs as exactly one of the above.
\end{lemma}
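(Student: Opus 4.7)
The plan is a direct combinatorial unpacking of Lemma \ref{CriticalRelation}. Fix $J \subseteq \{1, \ldots, N\}$ critical for $\hat{A}$. By part (3) of Lemma \ref{CriticalRelation}, at least one of $J$ or $J \cup \{N+1\}$ is critical for $\tilde{A}$, so there are exactly three mutually exclusive possibilities: (a) $J$ is critical for $\tilde{A}$ but $J\cup\{N+1\}$ is not; (b) both $J$ and $J\cup\{N+1\}$ are critical for $\tilde{A}$; (c) $J\cup\{N+1\}$ is critical for $\tilde{A}$ but $J$ is not. I would then translate the status of $J\cup\{N+1\}$ via part (1) of Lemma \ref{CriticalRelation}, which says $J\cup\{N+1\}$ is critical for $\tilde{A}$ if and only if $J$ is critical for $A$. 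Applying this dictionary, case (a) becomes precisely case (1) of the trichotomy, case (b) becomes case (2), and case (c) becomes case (3). Mutual exclusivity is immediate since the three cases are distinguished by which of $J$, $J\cup\{N+1\}$ lie in the critical set of $\tilde{A}$.

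For the second claim, I would verify the two directions separately. If $J$ is critical for $A$, then by part (2) of Lemma \ref{CriticalRelation} it is also critical for $\hat{A}$, and since $J$ is critical for $A$, it must fall into case (2) or (3) of the trichotomy (as these are exactly the cases where $J$ is critical for $A$). Conversely, any critical subset $J'$ for $\tilde{A}$ is either of the form $J' = J$ with $J \subseteq \{1, \ldots, N\}$, in which case restricting the defining condition shows $J$ is critical for $\hat{A}$ (falling into case (1) or (2)), or of the form $J' = J \cup \{N+1\}$, in which case part (1) gives that $J$ is critical for $A$ and part (2) then gives that $J$ is critical for $\hat{A}$ (falling into case (2) or (3)). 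In all instances the associated $J$ is unique, so each critical subset for $A$ or $\tilde{A}$ appears under exactly one case.

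There is no substantive obstacle here: the content of the lemma is entirely in the equivalences of Lemma \ref{CriticalRelation}, and the trichotomy is essentially a reorganization of that information into a form convenient for the inductive Morse-theoretic argument that presumably follows. The only care required is to keep track of whether $N+1$ is in the index set, so that cases are not double-counted.
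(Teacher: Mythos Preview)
Your proposal is correct and matches the paper's approach: the paper introduces this lemma explicitly as a rephrasing of Lemma \ref{CriticalRelation} and gives no proof beyond a \qed, so your direct combinatorial unpacking via parts (1)--(3) of that lemma is precisely the intended argument, made explicit.
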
 \qed

\begin{theorem} If $\tilde{M}$ is a hyperk\"ahler modification of
$M$ and $\hat{M}$ is the corresponding quotient, then
\begin{equation} \label{PoincareRelation}
P(\tilde{M}) = P(M) + t^2 P(\hat{M}).
\end{equation}
\end{theorem}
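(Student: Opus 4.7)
The plan is to apply the equivariantly perfect Morse--Kirwan decomposition of Theorem \ref{SurjectivityCriterion} to $\tilde{M}$, $M$, and $\hat{M}$ simultaneously, and then use Lemma \ref{Trichotomy} to reconcile the three resulting Morse sums. By Proposition \ref{CriticalSets}, applied to the respective norm-square functions, we obtain rational expressions
\[
P(\tilde{M}) = \frac{1}{(1-t^2)^{r+1}} - \tilde{\Sigma}, \qquad P(M) = \frac{1}{(1-t^2)^{r}} - \Sigma, \qquad P(\hat{M}) = \frac{1}{(1-t^2)^{r+1}} - \hat{\Sigma},
\]
where $r = \dim T$ and $\tilde{\Sigma}, \Sigma, \hat{\Sigma}$ are Morse sums over proper critical subsets for $\tilde{A}$, $A$, and $\hat{A}$, respectively (each term of the form $t^{2(N-\#J)}(1-t^2)^{-\mathrm{codim}} P(M_J)$). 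Since $(1-t^2)^{-(r+1)} = (1-t^2)^{-r} + t^2(1-t^2)^{-(r+1)}$, the theorem reduces to the claim $\tilde{\Sigma} = \Sigma + t^2 \hat{\Sigma}$.

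I would split $\tilde{\Sigma}$ according to whether $N+1 \in \tilde{J}$. By Lemma \ref{Trichotomy}, the proper critical subsets $\tilde{J}$ for $\tilde{A}$ are of two types: those with $\tilde{J} = J \subseteq \{1,\ldots,N\}$ (cases 1 and 2, where $J$ is critical for $\tilde{A}$), and those with $\tilde{J} = J \cup \{N+1\}$ (cases 2 and 3, where $J$ is critical for $A$). For the first type, since $N+1 \notin \tilde{J}$ the condition $(x_{N+1}, y_{N+1}) = 0$ forces $\tilde{C}_J$ to lie in the slice $T^\ast \CC^N \subset T^\ast \CC^{N+1}$, where it coincides with $\hat{C}_J$; moreover the Morse index $2(N+1-\#J)$ for $\tilde{A}$ exceeds the Morse index $2(N-\#J)$ for $\hat{A}$ by exactly $2$, so these terms recover the cases-1-and-2 portion of $t^2 \hat{\Sigma}$.

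For the second type, the critical set $\tilde{C}_{J \cup \{N+1\}}$ has $\tilde{T}$-equivariant Poincar\'e series determined by a smaller hyperk\"ahler quotient $\tilde{M}_{J \cup \{N+1\}}$. I would proceed by induction on $\dim T$: in case 2 the weight $\tilde{u}_{N+1}$ is linearly independent of $\{\tilde{u}_j : j \in J\}$, so Lemma \ref{Modification} yields a splitting $\tilde{T}_{J \cup \{N+1\}} \iso T_J \times S^1$ under which $\tilde{M}_{J \cup \{N+1\}}$ is precisely the modification of $M_J$ by the induced $S^1$-action; applying the inductive hypothesis $P(\tilde{M}_{J \cup \{N+1\}}) = P(M_J) + t^2 P(\hat{M}_J)$ decomposes the case-2 contributions of $\tilde{\Sigma}$ into the case-2 pieces of $\Sigma$ and $t^2 \hat{\Sigma}$. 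The residual case-3 terms must then be shown directly to supply the remaining case-3 contributions to $\Sigma$ and $t^2 \hat{\Sigma}$.

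The main obstacle will be the case-3 analysis: here $\tilde{u}_{N+1} \in \tilde{\ft}_J$, so $\tilde{T}_{J \cup \{N+1\}} = \tilde{T}_J$ and the identification of $\tilde{M}_{J \cup \{N+1\}}$ with a modification of $M_J$ fails. The delicate task is to exhibit $\tilde{C}_{J \cup \{N+1\}}$ as a fibration (or extension) over $\hat{C}_J$ compatible with the Morse-index shift, using the explicit linear dependence $\tilde{u}_{N+1} = \sum_{j \in J} a_j \tilde{u}_j$ and Lemma \ref{Modification} to control the geometry. Once this case is handled, assembling the three cases of Lemma \ref{Trichotomy} yields $\tilde{\Sigma} = \Sigma + t^2 \hat{\Sigma}$ and completes the proof.
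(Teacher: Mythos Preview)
Your overall framework---reducing to $\tilde\Sigma = \Sigma + t^2\hat\Sigma$ via the three equivariantly perfect Morse sums and then matching terms through Lemma~\ref{Trichotomy}---is exactly the paper's strategy. The error is that you have the roles of Cases~2 and~3 reversed.

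In Case~2 one has $\tilde u_{N+1}\notin\tilde\ft_J$, so $\tilde u_{N+1}$ is a \emph{coloop} relative to $\{\tilde u_j:j\in J\}$. Passing from the weight collection $\{\tilde u_j:j\in J\}$ to $\{\tilde u_j:j\in J\cup\{N+1\}\}$ therefore adds an independent direction to the torus together with a single new coordinate on which only that direction acts nontrivially; the extra $S^1$ and extra $T^\ast\CC$ cancel in the quotient, giving $\tilde M_{J\cup\{N+1\}}\cong M_J$ directly. This is not a modification of $M_J$, and no induction is needed here---indeed, if your proposed inductive identity $P(\tilde M_{J\cup\{N+1\}})=P(M_J)+t^2P(\hat M_J)$ held in Case~2, you would double-count the $\hat C_J$ term, which you have already matched with the $\tilde C_J$ term of the first type.

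Conversely, Case~3 is precisely where the modification structure appears. There $\tilde u_{N+1}\in\tilde\ft_J$, so $\tilde T_{J\cup\{N+1\}}=\tilde T_J$, and quotienting $\tilde\ft_J$ by $\tilde u_{N+1}$ recovers $\ft_J$. Comparing with Lemma~\ref{Modification} at the level of $J$, one sees that $\tilde M_{J\cup\{N+1\}}$ is a modification of $M_J$ with corresponding quotient $\hat M_J$. Since $J$ is proper, the inductive hypothesis (most naturally on $N=\#A$, the number of weights, rather than on $\dim T$) yields $P(\tilde M_{J\cup\{N+1\}})=P(M_J)+t^2P(\hat M_J)$, and this single term supplies both the $C_J$ contribution to $\Sigma$ and the $\hat C_J$ contribution to $t^2\hat\Sigma$---exactly the two pieces missing in Case~3, where neither $\tilde C_J$ nor any other $\tilde A$-critical set is available. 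So the ``main obstacle'' you anticipate dissolves once the cases are assigned correctly.
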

\begin{proof} We will prove this by induction on
 $N = \# A$, the number of the number of weights (equivalently, the number of
hyperplanes in $\mathcal{A}$). The base case can be verified
easily, so we assume that the result is true for modifications
$(\tilde{M}', M', \hat{M}')$, where $M'$ is a quotient of $T^\ast \CC^{N'}$,
with $N' < N$.

By Theorems \ref{SurjectivityCriterion} and
\ref{GlobalEstimate}, the functions $f = |\muc|^2$, 
$\tilde{f} = |\tilde{\mu}_\CC|^2$, and $\hat{f} = |\hat{\mu}_\CC|^2$ 
are equivariantly perfect.
$M$ is a quotient of $T^\ast \CC^N$ by a torus of rank $d$,
while $\tilde{M}$ and $\hat{M}$ are quotients of $T^\ast \CC^{N+1}$
and $T^\ast \CC^N$, respectively, by a torus of rank $d+1$.
Hence we have
\begin{eqnarray*}   
\frac{1}{(1-t^2)^d} 
&=& \sum_C t^{\lambda_C} P_T(C), \\
\frac{1}{(1-t^2)^{d+1}} 
&=& \sum_{\tilde{C}} t^{\tilde{\lambda}_C} P_{\tilde{T}}(\tilde{C}), \\
\frac{1}{(1-t^2)^{d+1}} 
&=& \sum_{\hat{C}} t^{\hat{\lambda}_C} P_{\hat{T}}(\hat{C}).
\end{eqnarray*}
Since $0$ is the absolute minimum in each case, we obtain
\[ \frac{1}{(1-t^2)^d} = P(M) + \sum_{C, f(C) > 0} t^{\lambda_C} P_T(C), \]
and similarly for $\tilde{M}$ and $\hat{M}$. Since 
\[ \frac{1}{(1-t^2)^{d+1}} = \frac{1}{(1-t^2)^d} + \frac{t^2}{(1-t^2)^{d+1}} = 0, \]
we just have to show that
\[ \sum_{\tilde{C}, \tilde{f}(\tilde{C}) > 0} t^{\tilde{\lambda}_C} P_{\tilde{T}}(\tilde{C})
= \sum_{C, f(C) > 0} t^{\lambda_C} P_T(C)
 +  \sum_{\hat{C}, \hat{f}(\hat{C}) > 0} t^{\hat{\lambda}_C+2} P_{\hat{T}}(\hat{C}) \]
to obtain the desired recurrence relation among the Poincar\'e polynomials
of $M$, $\tilde{M}$, and $\hat{M}$. By Lemma \ref{Trichotomy}, there
is a trichotomy relating the critical sets of $\tilde{f}$ to the critical
sets of $f$ and $\hat{f}$. We will consider each case separately.

Case (1): We have $\tilde{C} = \tilde{C}_J$ where $J$ is critical with
respect to $\tilde{A}$ and $\hat{A}$ but not with respect to $A$. From
Proposition \ref{CriticalSets}, we have 
$\tilde{\lambda}_J = \hat{\lambda}_J + 2$ and 
$\tilde{C}_J = \hat{C}_J \times (0, 0)$. Hence 
$t^{\tilde{\lambda}_J} P_{\tilde{T}}(\tilde{C}_J) 
= t^{\hat{\lambda}_J+2} P_{\hat{T}}(\hat{C}_J)$.

Case (2): We have $\tilde{C} = \tilde{C}_J$ where $J$ is critical with
respect to $\hat{A}$ and $J \cup \{N+1\}$ is also critical for
$\tilde{A}$. Then $\tilde{\lambda}_J = \hat{\lambda}_J + 2$
and $\tilde{\lambda}_{J\cup\{N+1\}} = \lambda_J$. The terms
involving $\tilde{C}_J$ and $\hat{C}_J$ are equal as in case (1).
Since both $J$ and $J \cup \{N+1\}$ are critical for $\tilde{A}$, it must
be that $\tilde{u}_{N+1} \not\in \tilde{\ft}_J$. Hence
$\tilde{T}_{J \cup \{N+1\}} \iso \tilde{T}_J \times S^1$, where the 
last factor is generated by $\tilde{u}_{N+1}$, and we find that
$\tilde{M}_{J \cup \{N+1\}} \iso M_J$. Hence 
$P_{\tilde{T}}(\tilde{C}_{J \cup \{N+1\}}) = P_T(C_J)$.

Case (3): $\tilde{C} = \tilde{C}_{J \cup \{N+1\}}$, where
$J$ is critical for $A$ and $\hat{A}$ but not for $\tilde{A}$.
By Proposition \ref{CriticalSets}, we have
$P_{\tilde{T}}(\tilde{C}_{J \cup\{N+1\}}) = (1-t^2)^{-r} P(\tilde{M}_{J \cup \{N+1\}})$,
$P_T(C_J) = (1-t^2)^{-r} P(M_J)$, and 
$P_{\hat{T}}(\hat{C}_J) = (1-t^2)^{-r} P(\hat{M}_J)$, where $r$ is
the codimension of $T_J$ in $T$. Thus we have to show that
\begin{equation} \label{InductionRelation}
P(\tilde{M}_{J \cup \{N+1\}}) = P(M_J) + t^2 P(\hat{M}_J).
\end{equation}
But $\tilde{M}_{J \cup \{N+1\}}$ is a modification of $M_J$, and $\hat{M}_J$
is the corresponding quotient of $M_J$. Since 
$J$ is a proper subset of $\{1, \ldots, N\}$, the relation
(\ref{InductionRelation}) is true by induction.
\end{proof}

The relation (\ref{PoincareRelation}) is equivalent to
the following recurrence relation among the Betti numbers
of $M$, $\tilde{M}$, and $\hat{M}$:
\[ \tilde{b}_{2k} = b_{2k} + \hat{b}_{2k} + \hat{b}_{2k-2}. \]
If we let $d_k$ denote the number of $k$-dimensional facets of
the polyhedral complex generated by the half-spaces $H_i^\pm$ 
(with $\tilde{d}_k$ and $\hat{d}_k$ defined similarly), 
it turns out \cite{BielawskiDancerHypertoric} that these satisfy the 
same relation:
\[ \tilde{d}_k = d_k + \hat{d}_k + \hat{d}_{k-1}. \]
Since any toric hyperk\"ahler orbifold can be constructed out of 
a finite sequence of modifications starting with $T^\ast \CC^n$, an
 easy induction argument then yields the following explicit description
of $P(M)$.
\begin{corollary}[{Bielawski-Dancer \cite[Theorem 7.6]{BielawskiDancerHypertoric}}]
\label{HypertoricPoincare}
The Poincar\'e polynomial of $M$ is given by
\[ P(M) = \sum_k d_k (t^2 - 1)^k. \]
\end{corollary}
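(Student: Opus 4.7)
The plan is to prove the formula by induction on the length of a minimal sequence of hyperk\"ahler modifications needed to construct $M$ from a flat model $T^\ast \CC^n$, using both the topological recurrence (\ref{PoincareRelation}) just established and the parallel combinatorial recurrence $\tilde{d}_k = d_k + \hat{d}_k + \hat{d}_{k-1}$ proved in \cite{BielawskiDancerHypertoric}. The two recurrences have exactly the same shape, so the induction essentially writes itself.

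For the base case, take $M = T^\ast \CC^n$, which is contractible, so $P(M) = 1$. For the associated arrangement, the hyperplanes $H_i$ in $\fkd = \RR^n$ are (after a generic choice of level) the coordinate hyperplanes translated to generic positions, and the only bounded facet of the resulting polyhedral complex is the single intersection point. Hence $d_0 = 1$ and $d_k = 0$ for $k \geq 1$, so $\sum_k d_k(t^2-1)^k = 1 = P(M)$, as required.

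For the inductive step, suppose the formula has been established for $M$ and $\hat{M}$, and let $\tilde{M}$ be the modification defined in (\ref{DefnMTilde}). Multiplying the facet recurrence by $(t^2-1)^k$ and summing gives
\[
\sum_k \tilde{d}_k(t^2-1)^k
= \sum_k d_k(t^2-1)^k + \bigl(1 + (t^2-1)\bigr)\sum_k \hat{d}_k(t^2-1)^k
= P(M) + t^2 P(\hat{M}),
\]
where the inductive hypothesis is used at the second equality. By the theorem just proved, this quantity equals $P(\tilde{M})$, so the formula holds for $\tilde{M}$. Since every toric hyperk\"ahler orbifold can be obtained from $T^\ast \CC^n$ via a finite sequence of modifications \cite{BielawskiDancerHypertoric}, the induction terminates and establishes the corollary for all $M$.

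The only real obstacle is ensuring the base case matches: one must adopt the facet-counting convention of \cite{BielawskiDancerHypertoric} (bounded facets of the polyhedral complex, counted with the appropriate multiplicity in the orbifold setting) so that the combinatorial recurrence has the same form as the topological one. Once this is done, everything reduces to the formal identity $(1-\alpha) + t^2 \beta = \gamma$ when $\gamma = \alpha + t^2\beta$ and the two recurrences are recognised as identical linear relations applied to sequences $(b_{2k})$ and $(d_k)$ indexed the same way.
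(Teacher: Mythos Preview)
Your approach is exactly the one the paper sketches: match the topological recurrence $P(\tilde{M}) = P(M) + t^2 P(\hat{M})$ against the combinatorial recurrence $\tilde{d}_k = d_k + \hat{d}_k + \hat{d}_{k-1}$ and induct. Your generating-function manipulation and base case are correct.

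There is one small but genuine issue with your induction variable. You induct on the modification length, i.e.\ the rank $d$ of the torus $T$. But in the inductive step you need the formula for \emph{both} $M$ and $\hat{M}$, and $\hat{M} = T^\ast\CC^N \rred \tilde{T}$ is a quotient by a torus of rank $d+1$---the same rank as for $\tilde{M}$. So the inductive hypothesis, as you have stated it, does not cover $\hat{M}$. The fix is to induct on $N$, the number of hyperplanes: both $M$ and $\hat{M}$ are quotients of $T^\ast\CC^N$ (so have $N$ hyperplanes in their arrangements), while $\tilde{M}$ is a quotient of $T^\ast\CC^{N+1}$. The base cases are then the trivial quotients $T^\ast\CC^n$ for all $n$, which you have already verified. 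With this adjustment your argument goes through and is precisely what the paper intends by ``an easy induction argument.''
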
 \qed

\bibliographystyle{amsplain}

 
\end{document}